\def\RSthmtxt{theorem~}\newref{thm}{name = \RSthmtxt}}
\def\RSlemtxt{lemma~}\newref{lem}{name = \RSlemtxt}}
\numberwithin{equation}{section}
\numberwithin{figure}{section}
\numberwithin{table}{section}
\theoremstyle{plain}
\newtheorem{thm}{\protect\theoremname}[section]
\theoremstyle{plain}
\newtheorem{prop}[thm]{\protect\propositionname}
\theoremstyle{plain}
\newtheorem{lem}[thm]{\protect\lemmaname}
\theoremstyle{plain}
\newtheorem{cor}[thm]{\protect\corollaryname}
\providecommand{\TITLE}[1]{}
\providecommand{\AUTHORS}[1]{}
\providecommand{\VOLUME}[1]{}
\providecommand{\DOI}[1]{}
\providecommand{\PAPERNUM}[1]{}
\providecommand{\SHORTTITLE}[1]{}
\providecommand{\ABSTRACT}[1]{}
\global\long\def\eps{\varepsilon}
\providecommand{\corollaryname}{Corollary}
\providecommand{\lemmaname}{Lemma}
\providecommand{\propositionname}{Proposition}
\providecommand{\theoremname}{Theorem}
\begin{document}
\global\long\def\supp{\operatorname{supp}}%

\global\long\def\Cov{\operatorname{Cov}}%

\global\long\def\Var{\operatorname{Var}}%

\global\long\def\pv{\operatorname{p.v.}}%

\global\long\def\e{\mathrm{e}}%

\global\long\def\R{\mathbf{R}}%

\global\long\def\supp{\operatorname{supp}}%

\global\long\def\dif{\mathrm{d}}%

\global\long\def\eps{\varepsilon}%

\ifdefined\journalversion
\excludecomment{abstract}
\newcommand{\title}[1]{}
\newcommand{\author}[1]{}
\newcommand{\date}[1]{}
\newcommand{\thanks}[1]{}
\newcommand{\email}[1]{}
\newcommand{\address}[1]{}
\fi
\title{The continuum parabolic Anderson model with a half-Laplacian and periodic
noise}
\author{Alexander Dunlap}
\date{\today}
\address{Department of Mathematics, Stanford University, Stanford, CA 94305
USA.}
\email{{\rmfamily\itshape \href{mailto:ajdunl2@stanford.edu}{\nolinkurl{ajdunl2@stanford.edu}}}}
\begin{abstract}
We construct solutions of a renormalized continuum fractional parabolic
Anderson model, formally given by $\partial_{t}u=-(-\Delta)^{\frac{1}{2}}u+\xi u$,
where $\xi$ is a periodic spatial white noise. To be precise, we
construct limits as $\eps\to0$ of solutions of $\partial_{t}u_{\eps}=-(-\Delta)^{\frac{1}{2}}u_{\eps}+(\xi_{\eps}-C_{\eps})u_{\eps}$,
where $\xi_{\eps}$ is a mollification of $\xi$ at scale $\eps$
and $C_{\eps}$ is a logarithmically diverging renormalization constant.
We use a simple renormalization scheme based on that of Hairer and
Labbé, ``A simple construction of the continuum parabolic Anderson
model on $\mathbf{R}^{2}$.''
\end{abstract}

\maketitle

\section{Introduction}

Let $\Lambda=-(-\Delta)^{\frac{1}{2}}$ be the half-Laplacian on $\mathbf{R}$.
It is given by the formula
\[
\Lambda f(x)=\frac{1}{\pi}\pv\int_{\mathbf{R}}\frac{f(y)-f(x)}{(y-x)^{2}}\,\dif y.
\]
Here and throughout the paper, $\pv\int_{\mathbf{R}}$ will denote
the principal value integral: if $g$ is a function with a singularity
at $x$, then
\[
\pv\int_{\mathbf{R}}g(y)\,\dif y=\lim_{\eps\downarrow0}\int_{\mathbf{R}\setminus[x-\eps,x+\eps]}g(y)\,\dif y.
\]
Also, let $\xi$ be a periodic Gaussian spatial white noise on $\mathbf{R}$
of period $L\in(0,\infty)$. The covariance kernel of $\xi$ is thus
given by $\mathbf{E}\xi(x)\xi(y)=\sum\limits _{k\in\mathbf{Z}}\delta(x-y+kL)$.
We are interested in the fractional parabolic Anderson model (PAM)
formally given by
\begin{align}
\partial_{t}u & =\Lambda u+\xi u; & u(0,\cdot) & =\underline{u}.\label{eq:fpam-formal}
\end{align}
We would expect solutions of \eqref{fpam-formal} to model scaling
limits of a PAM on the lattice with long-range jumps. This lattice
model, with a non-Gaussian noise, was previously studied in \cite{MZ12}.
We refer, for example, to \cite{K16} for more background on the PAM.

Straightforward heuristics indicate that \eqref{fpam-formal} cannot
be interpreted directly. Indeed, the white noise $\xi$ has (Hölder)
regularity ``$-\frac{1}{2}-$'' (i.e. any regularity strictly below
$-\frac{1}{2}$), and thus the solution to the linearized problem
(around $u\equiv1$) of \eqref{fpam-formal} has regularity $\frac{1}{2}-$,
since we gain one derivative by inverting the half-Laplacian. So we
can expect the regularity of $u$ to be at most $\frac{1}{2}-$. Thus
the product of $\xi$ and $u$ is undefined since the sum of their
regularities is (just barely) negative. This is why the power of $\frac{1}{2}$
on the Laplacian is interesting---it is the largest power such that
the product in \eqref{fpam-formal} is ill-defined.

Since abstract theory does not allow us to interpret the problem as
stated, we turn our attention to a regularized problem and try to
pass to a limit as the regularization is removed. We will see that
a renormalization is necessary to obtain a finite limit. Fix a mollifier
$\rho\in\mathcal{C}_{\mathrm{c}}^{\infty}$ (i.e. smooth with compact
support) so that $\int_{\mathbf{R}}\rho\equiv1$. For $\eps>0$, define
$\rho_{\eps}(x)=\eps^{-1}\rho(\eps^{-1}x)$ and $\xi_{\eps}=\rho_{\eps}*\xi$,
where $*$ denotes spatial convolution. Fix a constant $C_{\eps}\in\mathbf{R}$,
depending on $\eps$, and an initial condition $\underline{u}$. Then
we consider the problem
\begin{align}
\partial_{t}u_{\eps} & =\Lambda u_{\eps}+(\xi_{\eps}-C_{\eps})u_{\eps}; & u_{\eps}(0,\cdot) & =\underline{u}.\label{eq:uepsproblem}
\end{align}
This equation can be solved using standard techniques because $\xi_{\eps}\in\mathcal{C}^{\infty}$
for all $\eps>0$. Our goal will be to pass to the limit as $\eps\to0$.
To state our main theorem, we first define the Banach space in which
this convergence takes place. If $\mathcal{Y}$ is a Banach space,
we define for $\kappa\in\mathbf{R}$ and $T>0$ the Banach space $\mathcal{X}_{T}^{\kappa}(\mathcal{Y})$
to be the space of functions $f\in\mathcal{C}_{\mathrm{loc}}((0,T];\mathcal{Y})$
with finite norm
\begin{equation}
\|f\|_{\mathcal{X}_{T}^{\kappa}(\mathcal{Y})}\coloneqq\sup_{t\in(0,T]}t^{1-\kappa}\|f(t,\cdot)\|_{\mathcal{Y}}.\label{eq:XkappaTdef}
\end{equation}

\begin{thm}
\label{thm:maintheorem}There is a choice of deterministic constants
$C_{\eps}$, $\eps\in(0,1]$ (explicitly defined in \eqref{Cepsdef}
below), so that the following holds. For any $\kappa\in(0,1/4)$,
if $\underline{u}\in\mathcal{C}^{-\frac{1}{2}+2\kappa}$, then for
each $\eps\in[0,1]$ there is a random $u_{\eps}\in\mathcal{C}_{\mathrm{loc}}((0,\infty);\mathcal{C}^{\frac{1}{2}-\kappa})$
so that whenever $\eps>0$, $u_{\eps}$ is a mild solution to \eqref{uepsproblem},
and moreover for every $T>0$, $u_{\eps}\to u_{0}$ in probability
in $\mathcal{X}_{T}^{\kappa}(\mathcal{C}^{\frac{1}{2}-\kappa})$.
Finally, we have a constant $C<\infty$ so that, for all $\eps\in(0,1]$,
\begin{equation}
|C_{\eps}-(1/\pi)\log(1/\eps)|\le C.\label{eq:Cepsbound}
\end{equation}
\end{thm}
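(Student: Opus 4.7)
I would adapt the Hairer--Labb\'e simple renormalization scheme to the nonlocal setting of the half-Laplacian. The starting point is to introduce an auxiliary stationary Gaussian process $Y_\eps$ on $\mathbf{R}$, defined as the unique $L$-periodic solution of $(m-\Lambda)Y_\eps=\xi_\eps$ for some fixed $m>0$ (the mass term is only to ensure invertibility on the zero Fourier mode). A direct Fourier computation on the torus shows $Y_\eps\in\mathcal{C}^{\frac{1}{2}-\kappa'}$ uniformly in $\eps$ and $Y_\eps\to Y_0$ in probability. The renormalization constant is taken (essentially) as $C_\eps=\mathbf{E}[Y_\eps(x)\xi_\eps(x)]$, which is $x$-independent by stationarity; the logarithmic asymptotic \eqref{Cepsbound} falls out of the explicit evaluation of the resulting Fourier sum $\sum_{k}\widehat{\rho}(k\eps)^{2}/(m+|k|)$, where the factor $1/\pi$ arises from the Fourier symbol $|k|$ of $-\Lambda$ together with the $L^{2}$-normalization of the trigonometric modes.

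Next I would perform a change of variables removing the most singular contribution to the product $\xi_\eps u_\eps$---for instance the Cole--Hopf-type ansatz $u_\eps=\e^{Y_\eps}v_\eps$ or its linearized version $u_\eps=(1+Y_\eps)v_\eps$---and derive the resulting PDE for $v_\eps$. Because $\Lambda$ has no chain rule, one encounters commutators of the form $\Gamma(f,g)\coloneqq\Lambda(fg)-f\Lambda g-g\Lambda f$, which via the singular-integral representation $\Lambda f(x)=\pi^{-1}\pv\int(f(y)-f(x))/(y-x)^{2}\,\dif y$ can be shown to gain up to a half-derivative of regularity uniformly in $\eps$. After algebraic rearrangement using $\Lambda Y_\eps=mY_\eps-\xi_\eps$, the equation for $v_\eps$ takes a schematic form involving $\Gamma(Y_\eps,v_\eps)$, a benign mass term $mY_\eps v_\eps$, a linear damping $-C_\eps v_\eps$, and the renormalized resonant product $(\xi_\eps Y_\eps-C_\eps)v_\eps$. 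All products therein are then classically well-defined once one knows that $\xi_\eps Y_\eps-C_\eps$ converges in $\mathcal{C}^{-\kappa'}$; this is a standard Gaussian chaos computation using the Fourier expansion of $\xi_\eps$, and it is precisely this convergence requirement that forces the choice of $C_\eps$.

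The final step is a Banach fixed-point argument for $v_\eps$ in $\mathcal{X}_{T}^{\kappa}(\mathcal{C}^{\frac{1}{2}-\kappa})$, based on the Duhamel formulation for the Poisson semigroup $\e^{t\Lambda}$. The key analytic input is the Schauder-type bound $\|\e^{t\Lambda}f\|_{\mathcal{C}^{\alpha+\beta}}\lesssim t^{-\beta}\|f\|_{\mathcal{C}^{\alpha}}$ for $\beta\ge0$, which combined with the $t^{1-\kappa}$ weight in the norm of $\mathcal{X}_{T}^{\kappa}$ is exactly matched to initial data in $\mathcal{C}^{-\frac{1}{2}+2\kappa}$. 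Because the contraction constants depend only on the uniformly tight, in-probability-convergent norms of $Y_\eps$ and $\xi_\eps Y_\eps-C_\eps$, the solutions $v_\eps$ converge in probability in $\mathcal{X}_{T}^{\kappa}(\mathcal{C}^{\frac{1}{2}-\kappa})$, and multiplying by the convergent factor $\e^{Y_\eps}$ (or $1+Y_\eps$) yields the convergence $u_\eps\to u_0$. The main technical obstacle will be the commutator estimate for $\Gamma$: the Leibniz-rule cancellations available in the local $\Delta$ setting have to be replaced by direct singular-integral computations against the kernel $1/(y-x)^{2}$, and it is here that the nonlocality of $\Lambda$ makes the analysis genuinely more delicate than in the Hairer--Labb\'e $\mathbf{R}^{2}$ case.
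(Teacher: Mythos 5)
Your plan is correct and follows the same overall strategy as the paper—a multiplicative change of variables built from an approximate first-order solution, Wick renormalization of the resulting second-order Gaussian chaos, a commutator estimate for the nonlocal operator, and a fixed-point argument in a weighted H\"older space with the Schauder bound $\|P_t*f\|_{\mathcal{C}^{\alpha+\beta}}\lesssim t^{-\beta}\|f\|_{\mathcal{C}^\alpha}$—but you make two technical choices that differ from the paper's. First, you take the first-order object to be $Y_\eps=(m-\Lambda)^{-1}\xi_\eps$ on the torus, defined and analyzed in Fourier, with renormalization $C_\eps=\mathbf{E}[Y_\eps(x)\xi_\eps(x)]$; the paper instead convolves $\xi_\eps$ with a smoothly truncated, compactly supported copy $G$ of the free-space Green's function $\frac1\pi\log|\cdot|$, so $S_\eps=-G*\xi_\eps$, and sets $C_\eps=-\Lambda H_\eps(0)$ with $H_\eps$ the (periodized) covariance $G_\eps^{*2}$. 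Both choices produce the same $\frac1\pi\log(1/\eps)+O(1)$ asymptotic, so \eqref{eq:Cepsbound} is stable under the change; but the paper's compact-support choice is what powers its real-space analysis, since the norms $\|K\|_{\zeta;m}$ of \eqref{eq:singnorm} and the convolution bound \cite[(10.12)]{Hairer2014} are tailored to compactly supported singular kernels, and the price is the extra benign term $-F*\xi_\eps$ in \eqref{eq:vepsPDE} playing the same role as your mass term $mY_\eps$. Your Fourier route is plausible on the torus but would require replacing the $\|\cdot\|_{\zeta;m}$ machinery of \secref{sec:estimates} and \subsecref{subsec:Zeps} with an analogous control on the algebraically decaying resolvent kernel. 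Second, you identify the core singular object as the renormalized resonant product $\xi_\eps Y_\eps-C_\eps$ and the commutator $\Gamma(Y_\eps,v)=\Lambda(Y_\eps v)-Y_\eps\Lambda v-v\Lambda Y_\eps$; with the exponential ansatz the paper instead works directly with the quantities $Z_\eps$ and $\Xi_\eps$ of \eqref{eq:Zdef}--\eqref{eq:Xidef}, which are the exponentiated analogues (note $\Gamma(\e^{S_\eps},v)=\e^{S_\eps}\Xi_\eps v$, and the quadratic Wiener-chaos piece $U_\eps$ in \eqref{eq:Uepsdef} is $\tfrac12$ the Wick-renormalized $\Gamma(S_\eps,S_\eps)$, which encodes the same information as $\xi_\eps S_\eps-C_\eps$). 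You correctly flag the commutator/operator estimate as the genuinely new ingredient compared to Hairer--Labb\'e, and indeed this is the content of \lemref{lem:Xicts} and \propref{prop:Xiconv}. One small caution: if you pursue the linearized ansatz $u_\eps=(1+Y_\eps)v_\eps$ that you mention as an alternative, you need $1+Y_\eps$ bounded away from zero to invert the change of variables, which is not automatic; the exponential ansatz (which both you and the paper primarily use) avoids this issue entirely.
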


The model \eqref{fpam-formal} has similar local scaling properties
to the continuum PAM
\begin{equation}
\partial_{t}u=\Delta u+\xi u\label{eq:PAM}
\end{equation}
in \emph{two} spatial dimensions. That model also has a just-barely-ill-defined
product, and it also requires a logarithmic renormalization. Solutions
to \eqref{PAM} on a compact domain were constructed independently
in \cite{Hairer2014,GIP15} using the theories of regularity structures
and paracontrolled distributions, respectively. An elementary approach
that also works on the whole space was carried out by Hairer and Labbé
in \cite{HL15}, and some properties of solutions were derived in
\cite{GX18,GH19}. The more difficult case of \eqref{PAM} in three
spatial dimensions was tackled in \cite{HL16}. On the other hand,
singular stochastic PDEs involving fractional Laplacian terms have
previously been considered in \cite{BK17,CL19}.

Our approach to proving \thmref{maintheorem} closely follows the
strategy of \cite{HL15}, avoiding the use of regularity structures
or paracontrolled distributions. Similar strategies were used for
the random Schrödinger equation in \cite{DW18,DM19}. As in \cite{HL15},
we perform a change of variables in \eqref{uepsproblem} by writing
$u_{\eps}=\e^{S_{\eps}}v_{\eps}$, where $S_{\eps}$ is an approximate
solution to the linearized time-independent problem, and write a PDE
for $v_{\eps}$. (See \secref{chgvar}.) The coefficients of the PDE
for $v_{\eps}$ converge, in appropriate spaces, as $\eps\downarrow0$.
One of these converging ``coefficients'' is in fact a nonlocal operator.
Proving the convergence requires new estimates, which we carry out
in\textbf{ }\secref{coefficients} using some purely analytic bounds
that we prove in \secref{estimates}. Then the continuity of the PDE
for $v_{\eps}$ shows that $v_{\eps}\to v$, where $v$ solves the
limiting PDE. This is the main content of \secref{fixedpoint} and
is essentially the same as the argument of \cite{HL15}, as the estimates
having been obtained by this point are analogous. It is also easy
to see that $S_{\eps}$ converges to a limit $S$ as $\eps\downarrow0$.
Inverting the change of variables then shows that $u_{\eps}$ converges
to $\e^{-S}v$.

In this paper, we restrict ourselves to the case of periodic noise.
The periodicity is used so that the noise is bounded (as a distribution
in $\mathcal{C}^{-\frac{1}{2}-\kappa}$ for any $\kappa>0$) uniformly
in space. It is not clear whether or how solutions to \eqref{fpam-formal}
can be constructed with aperiodic white noise. In particular, the
weighted-space approach of \cite{HL15} does not immediately generalize
to our setting, because the Cauchy kernel decays only algebraically
in space, in contrast to the Gaussian decay of the heat kernel.

\subsection*{Acknowledgments}

We thank Lenya Ryzhik for suggesting the problem and much useful advice,
as well as Yu Gu, Leonid Mytnik, and Weijun Xu for interesting conversations.
We are also grateful to Leandro Chiarini for pointing out a subtlety
in the proof of \lemref{fractionalheatregularity}. We thank an anonymous
referee for a very careful reading of the manuscript and several important
comments. The author was partially supported by the NSF Graduate Research
Fellowship Program under Grant No.~DGE-1147470.

\section{Preliminaries and notation\label{sec:preliminaries}}

We will often work with constants, which we call $C$, and allow them
to change from line to line in a computation. This does \emph{not}
apply to the renormalization constant $C_{\eps}$, which will be fixed
in \eqref{Cepsdef} below.

\subsection{Hölder spaces}

We will work in $\alpha$-Hölder spaces, given as usual by the norm
\[
\|u\|_{\mathcal{C}^{\alpha}}=\|u\|_{L^{\infty}}+\sup_{|x-y|\le1}\frac{|u(x)-u(y)|}{|x-y|^{\alpha}}
\]
for all $\alpha\in(0,1)$. We will also use Hölder spaces with negative
Hölder exponent. Put
\begin{equation}
\eta_{x}^{\lambda}(y)=\lambda^{-1}\eta(\lambda^{-1}(y-x))\label{eq:etaable}
\end{equation}
for any function $\eta$. Then, for all $\alpha\in(-1,0)$, the $\alpha$-Hölder
norm of a distribution $u$ is
\[
\|u\|_{\mathcal{C}^{\alpha}}=\sup\{\lambda^{-\alpha}|u(\eta_{x}^{\lambda})|\ :\ x\in\mathbf{R},\eta\in\mathcal{C}^{1}([-1,1]),\|\eta\|_{\mathcal{C}^{0}}=1,\lambda\in(0,1]\}.
\]
Let $\mathcal{C}^{\alpha}$ be the Banach space of distributions such
that this norm is finite. We recall that $\mathcal{C}^{\alpha}$ is
equivalent to the Besov space $\mathcal{B}_{\infty,\infty}^{\alpha}$
(see \cite{BCD11}), and refer to \cite[Section 2]{HL15}, \cite[Section 3]{Hairer2014},
or \cite{FM17} for background on the use of negative Hölder spaces
for stochastic PDEs. We will use the following wavelet characterization
of negative Hölder spaces.
\begin{prop}[{\cite[Proposition 2.4]{HL15} or \cite[Definition 2.8 and Proposition 2.14]{FM17}}]
\label{prop:wavelets}There are compactly-supported functions $\psi,\phi\in\mathcal{C}_{\mathrm{c}}^{1}$
so that for any $\alpha\in(-1,0)$ we have a constant $C<\infty$
so that (using the notation \eqref{etaable})
\[
\|f\|_{\mathcal{C}^{\alpha}}\le C\sup_{x\in\mathbf{Z}}\left(\left|\int f\phi_{x}^{1}\right|+\sup_{n\in\mathbf{N}}2^{\alpha n}\left|\int_{\mathbf{R}}f\psi_{2^{-n}x}^{2^{-n}}\right|\right).
\]
\end{prop}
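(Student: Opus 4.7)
The proof uses a standard orthonormal wavelet basis of $L^2(\mathbf{R})$. I would fix compactly supported $\phi,\psi\in\mathcal{C}^1$ with $\int\psi=0$ --- for instance Daubechies wavelets of sufficiently high order --- so that $\{\phi(\cdot-k)\}_{k\in\mathbf{Z}}\cup\{2^{n/2}\psi(2^n\cdot-k)\}_{n\ge 0,\,k\in\mathbf{Z}}$ is an orthonormal basis. Any distribution $f$ of finite regularity then admits the wavelet expansion
\[
f=\sum_{k\in\mathbf{Z}}a_k\,\phi(\cdot-k)+\sum_{n\ge 0}\sum_{k\in\mathbf{Z}}b_{n,k}\,2^{n/2}\psi(2^n\cdot-k),
\]
with $a_k=\int f\,\phi_k^1$ and $b_{n,k}=2^{-n/2}\int f\,\psi_{2^{-n}k}^{2^{-n}}$; denoting the quantity inside $C\sup$ on the right-hand side by $M$, this yields immediately $|a_k|\le M$ and $|b_{n,k}|\le M\cdot 2^{-n(\alpha+1/2)}$.

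Now fix a rescaled test function $\eta_x^\lambda$ with $\eta$ supported in $[-1,1]$, $\lambda\in(0,1]$, and a uniform bound on $\|\eta\|_{\mathcal{C}^1}$. The core analytic ingredient is the wavelet--test-function pairing estimate
\[
\bigl|\langle 2^{n/2}\psi(2^n\cdot-k),\,\eta_x^\lambda\rangle\bigr|\le C\min\!\bigl(2^{-3n/2}\lambda^{-2},\ 2^{n/2}\bigr),
\]
combined with the observation that this pairing vanishes unless $k$ lies in a set of size $O(\max(1,2^n\lambda))$ around $2^n x$. The first bound in the minimum comes from using $\int\psi=0$ together with a first-order Taylor expansion of $\eta_x^\lambda$, and is the sharper bound when $2^{-n}\le\lambda$; the second bound instead exploits the small support of the wavelet together with the $L^\infty$ bound on $\eta_x^\lambda$, and is sharper when $2^{-n}>\lambda$. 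The scaling-function contribution is handled directly: only $O(1)$ translates of $\phi$ overlap $\supp\eta_x^\lambda$, each with pairing $O(1)$, giving a total contribution of $O(M)\le O(M\lambda^\alpha)$.

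Pairing the wavelet expansion against $\eta_x^\lambda$ and splitting the sum at $n_0=\lceil\log_2(1/\lambda)\rceil$ yields two geometric sums. For $n\ge n_0$ one gets $M\lambda^{-1}\sum 2^{-n(1+\alpha)}$, which converges because $\alpha>-1$ and is dominated by its boundary term $\lambda^{\alpha+1}$, giving $O(M\lambda^\alpha)$; for $n<n_0$ one gets $M\sum 2^{-n\alpha}$, which is dominated by its largest term because $\alpha<0$, again giving $O(M\lambda^\alpha)$. Both sums converge precisely because $\alpha$ lies in the open interval $(-1,0)$. Dividing by $\lambda^{-\alpha}$ and taking the supremum over $x$, $\lambda$, and $\eta$ yields $\|f\|_{\mathcal{C}^\alpha}\le CM$. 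The main technical obstacle is the careful bookkeeping of the support conditions on $k$ to obtain the combinatorial factor $O(\max(1,2^n\lambda))$ with the right constants; a secondary subtlety is that the definition in the excerpt normalizes only $\|\eta\|_{\mathcal{C}^0}$, so to apply the Taylor bound one must either absorb a bounded $\mathcal{C}^1$-norm factor into $C$ or invoke the equivalence $\mathcal{C}^\alpha=\mathcal{B}^\alpha_{\infty,\infty}$ and appeal to Meyer's classical wavelet characterization of Besov spaces.
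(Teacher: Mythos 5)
The paper does not actually prove \propref{wavelets}; it is quoted from \cite{HL15} and \cite{FM17}, so your blind proof supplies what the paper delegates to references. Your argument is the standard Meyer-type wavelet characterization and coincides with the approach of \cite[Proposition 2.4]{HL15}. The mechanics all check out: the Daubechies expansion, the two-regime pairing estimate $\min(2^{-3n/2}\lambda^{-2},\,2^{n/2})$, the count of $O(\max(1,2^{n}\lambda))$ active indices $k$, and the split at $n_{0}\approx\log_{2}(1/\lambda)$, with both geometric sums converging precisely because $\alpha\in(-1,0)$. Only the stated direction (wavelet coefficients controlling $\|f\|_{\mathcal{C}^{\alpha}}$) is claimed, and that is all you prove.

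The point I would sharpen is what you call a ``secondary subtlety,'' which is in fact the crux. The $2^{-3n/2}\lambda^{-2}$ bound genuinely carries a factor of $\|\eta\|_{\mathcal{C}^{1}}$ coming from the Taylor step that exploits $\int\psi=0$, since one differentiates $\eta_{x}^{\lambda}$ once and picks up $\lambda^{-2}\|\eta\|_{\mathcal{C}^{1}}$. The paper's definition of $\|\cdot\|_{\mathcal{C}^{\alpha}}$ as written normalizes only $\|\eta\|_{\mathcal{C}^{0}}$, so this factor is a priori unbounded; with only the $2^{n/2}$ bound available the sum over $n$ becomes $\sum_{n}2^{-n\alpha}$, which diverges for $\alpha<0$. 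Neither of your two proposed remedies quite closes this: ``absorbing a bounded $\mathcal{C}^{1}$-norm factor into $C$'' is not available when nothing in the definition bounds that norm, and the identification $\mathcal{C}^{\alpha}=\mathcal{B}_{\infty,\infty}^{\alpha}$ is itself valid only for the $\mathcal{C}^{1}$-normalized testing norm (one can cook up $f\in\mathcal{B}_{\infty,\infty}^{\alpha}$ for which the $\mathcal{C}^{0}$-normalized supremum is infinite by testing against highly oscillatory $\eta$). The honest resolution is that the normalization in the paper's Hölder-norm definition should read $\|\eta\|_{\mathcal{C}^{1}}\le1$, as in \cite{HL15} and \cite[Section 3]{Hairer2014}, where one tests against functions bounded in $\mathcal{C}^{r}$ with $r>|\alpha|$. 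With that corrected normalization, your proof is complete.
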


The following statement about multiplication of elements of Hölder
spaces is standard.
\begin{lem}[{\cite[Theorem 2.52]{BCD11}}]
\label{lem:multcont}If $\alpha<\beta$ and $\alpha+\beta>0$, then
multiplication of functions extends to a continuous bilinear map $\mathcal{C}^{\alpha}\times\mathcal{C}^{\beta}\to\mathcal{C}^{\alpha}$.
\end{lem}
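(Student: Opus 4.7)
Since this is a classical result from harmonic analysis (the reference is Theorem 2.52 in Bahouri--Chemin--Danchin), the plan is to prove it via Bony's paraproduct decomposition. Recall that $\mathcal{C}^{\alpha}$ is equivalent to the Besov space $\mathcal{B}_{\infty,\infty}^{\alpha}$, as already noted in the paragraph preceding Proposition \ref{prop:wavelets}, so dyadic Littlewood--Paley techniques are applicable. Letting $(\Delta_j)_{j\ge -1}$ denote the Littlewood--Paley projectors and $S_j=\sum_{k<j}\Delta_k$ the associated low-frequency cutoffs, the Hölder norm satisfies $\|f\|_{\mathcal{C}^{\gamma}}\sim \sup_j 2^{j\gamma}\|\Delta_j f\|_{L^{\infty}}$ (up to equivalent norms) for every $\gamma\in\mathbf{R}$.

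The first step is to split
\[
fg = T_f g + T_g f + R(f,g), \qquad T_f g = \sum_{j} S_{j-1} f\cdot \Delta_j g, \qquad R(f,g) = \sum_{|i-j|\le 1} \Delta_i f\cdot \Delta_j g,
\]
and to estimate each of the three pieces in the target space $\mathcal{C}^{\alpha}$. For the paraproduct $T_g f$, I would use that each block $S_{j-1}g\cdot \Delta_j f$ is Fourier-localized in an annulus of radius comparable to $2^j$, that $\|S_{j-1}g\|_{L^{\infty}}\le C\|g\|_{\mathcal{C}^{\beta}}$ uniformly in $j$ (here using $\beta>0$, which is forced by $\alpha<\beta$ and $\alpha+\beta>0$), and that $\|\Delta_j f\|_{L^{\infty}}\le C2^{-j\alpha}\|f\|_{\mathcal{C}^{\alpha}}$; this yields $\|T_g f\|_{\mathcal{C}^{\alpha}}\le C\|f\|_{\mathcal{C}^{\alpha}}\|g\|_{\mathcal{C}^{\beta}}$. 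For $T_f g$, the analogous reasoning places the sum in $\mathcal{C}^{\alpha+\beta}$ (using $\|S_{j-1}f\|_{L^{\infty}}\le C2^{-j\alpha}$ summed geometrically when $\alpha<0$, or $\|S_{j-1}f\|_{L^{\infty}}\le C\|f\|_{L^{\infty}}$ when $\alpha\ge 0$), which embeds continuously into $\mathcal{C}^{\alpha}$ because $\alpha+\beta>\alpha$.

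The main obstacle is the resonant term $R(f,g)$, because a block $\Delta_i f\cdot \Delta_j g$ with $|i-j|\le 1$ is only ball-localized in frequency (support contained in $\{|\zeta|\le C2^{j}\}$), not annulus-localized, so its regularity cannot be read off term-by-term. Instead, I would bound $\|\Delta_n R(f,g)\|_{L^{\infty}}$ by observing that only blocks with $j\ge n-C'$ contribute, obtaining
\[
\|\Delta_n R(f,g)\|_{L^{\infty}} \le C\sum_{j\ge n-C'} 2^{-j(\alpha+\beta)}\|f\|_{\mathcal{C}^{\alpha}}\|g\|_{\mathcal{C}^{\beta}} \le C 2^{-n(\alpha+\beta)}\|f\|_{\mathcal{C}^{\alpha}}\|g\|_{\mathcal{C}^{\beta}},
\]
where convergence of the geometric series uses \emph{precisely} the hypothesis $\alpha+\beta>0$. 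Hence $R(f,g)\in\mathcal{C}^{\alpha+\beta}\hookrightarrow \mathcal{C}^{\alpha}$, and combining the three estimates gives the claimed continuous bilinear extension $\mathcal{C}^{\alpha}\times \mathcal{C}^{\beta}\to \mathcal{C}^{\alpha}$.
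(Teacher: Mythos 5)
The paper does not prove this lemma---it simply cites \cite[Theorem 2.52]{BCD11}---so there is no in-paper argument to compare against. Your proof via Bony's paraproduct decomposition, estimating the two paraproducts and the resonant term separately and using the geometric-series convergence enabled by $\alpha+\beta>0$ to handle the ball-localized resonance, is correct and is essentially the argument given in the cited reference. (One small imprecision: for $T_f g$, the sum lands in $\mathcal{C}^{\alpha+\beta}$ only when $\alpha<0$; when $\alpha>0$ it lands in $\mathcal{C}^{\beta}$ instead, but since $\beta>\alpha$ the embedding into $\mathcal{C}^{\alpha}$ still holds, so the conclusion is unaffected.)
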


\subsection{The fractional Laplacian}

In this section we establish some necessary background results on
the fractional Laplacian, especially on inverting the fractional Laplacian
$\Lambda$ and the fractional heat operator $\partial_{t}-\Lambda$.
We recall (see e.g. \cite{Kwa17}) the equivalent definition
\begin{equation}
\Lambda f(x)=\frac{1}{\pi}\int_{\mathbf{R}}\frac{f(y)-f(x)-(y-x)f'(x)\mathbf{1}\{|y-x|\le1\}}{(y-x)^{2}}\,\dif y.\label{eq:FL-equivalent-def}
\end{equation}
The regularization $(y-x)f'(x)\mathbf{1}\{|y-x|\le1\}$ obviates the
need for the principal value.

We let $\delta$ denote a Dirac delta distribution at $0$. We will
work with an approximate Green's function of the fractional Laplacian,
defined in the following lemma.
\begin{lem}
\label{lem:Gdef}There is a smooth even function $G:\mathbf{R}\setminus\{0\}\to\mathbf{R}$
so that $\supp G\subset[-1,1]$,
\begin{align}
G(x) & =(1/\pi)\log|x|\ \text{for all }x\in[-1/2,1/2],\label{eq:Gislog}
\end{align}
and if $F=\Lambda G-\delta$, then $F$ is smooth and there is a constant
$C$ so that, for all $x\in\mathbf{R}$,
\begin{equation}
|F(x)|,|F'(x)|\le C(1+|x|)^{-2}.\label{eq:Fbd}
\end{equation}
\end{lem}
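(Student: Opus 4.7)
My plan is to build $G$ as a smooth compactly supported cutoff of the one-dimensional fundamental solution of $\Lambda$, and then use that identification to write $F$ as the fractional Laplacian of a smooth function, from which smoothness and decay follow by essentially explicit computation.

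First I would fix an even cutoff $\chi\in\mathcal{C}_{\mathrm{c}}^{\infty}$ with $\chi\equiv1$ on $[-1/2,1/2]$ and $\supp\chi\subset[-1,1]$, and define $G(x)=(1/\pi)\chi(x)\log|x|$ for $x\neq0$. The support, parity, and the identity \eqref{Gislog} are immediate from the construction. The next step is the crucial classical fact that
\[
\Lambda h=\delta\quad\text{in the sense of distributions, where }h(x)=(1/\pi)\log|x|.
\]
This can be verified by using \eqref{FL-equivalent-def} to compute $\Lambda h(x)=0$ for every $x\neq0$ directly (the integrand can be evaluated using the parity/scaling of $\log$), so $\Lambda h$ is supported at the origin; then the scaling identity $\Lambda[h(\lambda\cdot)]=\lambda\Lambda h(\lambda\cdot)$ combined with $h(\lambda\cdot)=h+(1/\pi)\log|\lambda|$ forces $\Lambda h$ to be homogeneous of degree $-1$ at the origin, hence a multiple of $\delta$, and the constant can be pinned down by pairing with a convenient test function (or equivalently read off the Fourier transform of $\log|x|$).

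With this in hand, I would set $\widetilde G:=G-h$. The logarithmic singularity cancels on $[-1/2,1/2]$ and both summands are smooth off the origin, so $\widetilde G\in\mathcal{C}^{\infty}(\mathbf{R})$; moreover $\widetilde G(x)=-(1/\pi)\log|x|$ for $|x|\ge1$, so $\widetilde G$ grows at most logarithmically and its derivatives decay at least like $|x|^{-1}$. Because $\Lambda h=\delta$,
\[
F=\Lambda G-\delta=\Lambda(G-h)=\Lambda\widetilde G.
\]
Using \eqref{FL-equivalent-def}, the integrand in the definition of $\Lambda\widetilde G(x)$ is bounded near $y=x$ (by a constant times $\|\widetilde G''\|_{L^{\infty}}$ locally) and behaves like $\log|y|/y^{2}$ at infinity, so the integral converges absolutely; differentiating under the integral (and commuting $\Lambda$ with $\partial_x^k$) shows $F\in\mathcal{C}^{\infty}(\mathbf{R})$.

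For the decay bound \eqref{Fbd}, I would use a different representation when $|x|\ge2$: since $G(x)=0$ there and $\delta=0$ away from the origin,
\[
F(x)=\Lambda G(x)=\frac{1}{\pi}\int_{-1}^{1}\frac{G(y)}{(y-x)^{2}}\,\dif y,
\]
and the elementary bound $|y-x|\ge|x|/2$ gives $|F(x)|\le C|x|^{-2}$; differentiating once under the integral gives $|F'(x)|\le C|x|^{-3}$, better than needed. On the compact region $|x|\le2$, smoothness of $F$ proved above supplies the uniform bound. The main obstacle, and the only nontrivial input, is the verification in the second paragraph that $h$ is the fundamental solution of $\Lambda$; the rest is bookkeeping around a well-chosen decomposition $G=h+\widetilde G$.
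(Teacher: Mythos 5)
Your proposal is correct and follows the same route as the paper: take $G$ to be a smooth compactly supported cutoff of the fundamental solution $h(x)=(1/\pi)\log|x|$, observe that $F=\Lambda G-\delta=\Lambda(G-h)$ with $G-h$ smooth, and derive the decay of $F$ and $F'$ from the explicit integral representation $F(x)=\frac{1}{\pi}\int_{-1}^{1}G(y)(y-x)^{-2}\,\dif y$ valid for $|x|$ large. You flesh out two points the paper leaves as standard (the verification that $\Lambda h=\delta$ and the explicit decay computation), but the underlying decomposition and argument are identical.
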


\begin{proof}
Let $\widetilde{G}(x)=\frac{1}{\pi}\log|x|$ for all $x\in\mathbf{R}\setminus\{0\}$.
It is standard that $\Lambda\widetilde{G}=\delta$ in the sense of
distributions. Take $G$ to be any smooth even function $\mathbf{R}\setminus\{0\}\to\mathbf{R}$
such that $\supp G\subset[-1,1]$ and $G|_{[-\frac{1}{2},\frac{1}{2}]}=\widetilde{G}|_{[-\frac{1}{2},\frac{1}{2}]}$.
Define $F=\Lambda G-\delta=\Lambda(G-\widetilde{G})$. Since $G-\widetilde{G}$
is smooth, $F$ is smooth as well. The estimate \eqref{Fbd} is then
an easy consequence of the decay of the kernel in \eqref{FL-equivalent-def}.
\end{proof}

We also will need a Schauder-type estimate for $G$.
\begin{lem}
\label{lem:fractionalGreensregularity}If $\alpha\in(-1,0)$, there
is a $C<\infty$ so that if $f\in\mathcal{C}^{\alpha}$ then $G*f\in\mathcal{C}^{\alpha+1}$
and $\|G*f\|_{\mathcal{C}^{\alpha+1}}\le C\|f\|_{\mathcal{C}^{\alpha}}$.
\end{lem}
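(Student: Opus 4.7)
The plan is to split $G$ into a smooth part and a log-singular part, handle each separately, and recombine. Using \eqref{Gislog} from \lemref{Gdef}, I write $G = G^{\mathrm{reg}} + G^{\mathrm{sing}}$ where $G^{\mathrm{sing}}(x) = (1/\pi)\chi(x)\log|x|$ for some smooth cutoff $\chi \equiv 1$ near $0$ and supported in $[-1/2, 1/2]$, and $G^{\mathrm{reg}}$ is smooth and compactly supported. Convolution with $G^{\mathrm{reg}}$ is easy: testing $f$ against the translate $G^{\mathrm{reg}}(x-\cdot)$ as a scaled smooth test function in the $\mathcal{C}^{\alpha}$ definition gives $G^{\mathrm{reg}}*f \in \mathcal{C}^{\infty}$ with all norms bounded by $C\|f\|_{\mathcal{C}^{\alpha}}$.

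For $G^{\mathrm{sing}}*f$, I perform a physical-space dyadic decomposition. Pick $\phi \in \mathcal{C}_{\mathrm{c}}^{\infty}$ supported in the annulus $\{1/2 \le |u| \le 2\}$ with $\sum_{n\ge 0}\phi(2^n x) \equiv 1$ on $(0, 1/2]$, and write $G^{\mathrm{sing}} = \sum_{n\ge 0}G_n$ with $G_n(x) = (1/\pi)\phi(2^n x)\log|x|$. The identity $\log|x| = \log|2^n x| - n\log 2$ expresses each $G_n$ as $(1/\pi)[\Psi(2^n x) - n(\log 2)\Phi(2^n x)]$ for the fixed smooth compactly-supported functions $\Psi(u) = \phi(u)\log|u|$ and $\Phi(u) = \phi(u)$. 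Applying the $\mathcal{C}^{\alpha}$ characterization at scale $\lambda = 2^{-n}$ then gives $\|G_n*f\|_{L^{\infty}} \le C(1+n)\,2^{-(1+\alpha)n}\|f\|_{\mathcal{C}^{\alpha}}$, and a parallel calculation on the derivative yields $\|\partial(G_n*f)\|_{L^{\infty}} \le C(1+n)\,2^{-\alpha n}\|f\|_{\mathcal{C}^{\alpha}}$. Interpolating by $|g(x)-g(y)| \le \min(2\|g\|_{\infty},\, |x-y|\,\|\partial g\|_{\infty})$ at each scale and summing over $n$ is what I would use to assemble both the $L^{\infty}$ bound and the $(\alpha+1)$-Hölder seminorm bound on $G*f$.

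The main technical point I expect is the treatment of the polynomial weights $(1+n)$ coming from the logarithmic singularity. A straightforward summation of the interpolated bounds leaves an extra $\log(1/|x-y|)$ factor and thus only gives $\mathcal{C}^{(\alpha+1)-}$ regularity. To upgrade to the sharp $\mathcal{C}^{\alpha+1}$ bound, one exploits cancellation between consecutive dyadic scales: the $(-n\log 2)$ correction in the expansion of $\log|x|$ is independent of the test-function argument and so is killed upon pairing against any mean-zero kernel at scale $2^{-n}$. Concretely, working with a Littlewood--Paley block $P_n G(x) = \int [G(x-2^{-n}z)-G(x)]\rho(z)\,\dif z$ for a kernel $\rho$ with $\int \rho = 0$ gives the sharper estimate $\|P_n G\|_{L^{1}} \le C\,2^{-n}$, which yields $\|P_n(G*f)\|_{L^{\infty}} \le C\,2^{-(1+\alpha)n}\|f\|_{\mathcal{C}^{\alpha}}$ uniformly in $n$ and hence the desired Schauder estimate via the Besov characterization of Hölder spaces.
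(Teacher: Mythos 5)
The first part of your proposal---splitting $G$ into smooth and log-singular pieces, the $L^\infty$ bound, and the diagnosis that a naive dyadic interpolation produces a spurious $\log(1/|x-y|)$ factor in the H\"older seminorm---is all correct. The gap is in the repair. With only $\int\rho=0$, the claim $\|P_n G\|_{L^1}\le C2^{-n}$ is false. The mean-zero cancellation does kill the large constant $-n\log 2$ near the origin, so $|P_nG(x)|=O(1)$ for $|x|\lesssim 2^{-n}$; but for $2^{-n}\lesssim|x|\lesssim 1$ one has
\[
P_nG(x)=\int\bigl[G(x-2^{-n}z)-G(x)\bigr]\rho(z)\,\dif z=-2^{-n}G'(x)\int z\,\rho(z)\,\dif z+O\bigl(2^{-2n}|G''(x)|\bigr),
\]
and since $|G'(x)|\sim 1/|x|$ the leading term is $\sim 2^{-n}/|x|$ unless $\rho$ \emph{also} has vanishing first moment. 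Integrating over $2^{-n}\lesssim|x|\lesssim 1$ gives $\|P_nG\|_{L^1}\asymp n\,2^{-n}$: the log factor you tried to eliminate has migrated from the height of $G_n$ to the $L^1$ tail of $P_nG$. Separately, even granting the $L^1$ bound, the step $\|P_nG\|_{L^1}\le C2^{-n}\ \Rightarrow\ \|P_n(G*f)\|_\infty\le C2^{-n(1+\alpha)}\|f\|_{\mathcal C^\alpha}$ is not Young's inequality directly, since $f\in\mathcal C^\alpha$ with $\alpha<0$ is not in $L^\infty$; the standard deduction writes $P_n(G*f)=(P_nG)*(\widetilde P_nf)$ via the projection identity $P_n=P_n\widetilde P_n$, which your $P_n$---a single convolution with a mean-zero kernel rather than a genuine frequency-localized block---does not satisfy. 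A dyadic route can be made to work, either with true Littlewood--Paley blocks (all vanishing moments), or by exploiting $\sum_n\phi(2^nx)\equiv 1$ to recenter the weights $n\mapsto n-N$ before summing, but neither is what you wrote.

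The paper sidesteps all of this with a \emph{continuous-scale} representation: $L(x)=\int_0^1(1/y)\eta(x/y)\,\dif y$, where $\eta$ is a smooth bump on $[-\frac12,\frac12]$ equal to $1$ near $0$. Then $L=\pi G+k$ with $k$ smooth and compactly supported, and each scale-$y$ slice $\frac1y\eta(\cdot/y)$ is a \emph{uniformly bounded} bump---no factor of $\log(1/y)$ ever appears, because the logarithm is recovered by integrating in $y$, not by slicing $\log|x|$ itself. The H\"older increment of $L*f$ is then controlled by pairing $f$ against $\frac1y\bigl[\eta\bigl(\tfrac{x-\cdot}{y}\bigr)-\eta\bigl(\tfrac{x'-\cdot}{y}\bigr)\bigr]$ and splitting the $y$-integral at $y=2|x-x'|$; both resulting $y$-integrals are convergent power integrals giving $|x-x'|^{1+\alpha}$ on the nose. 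This is the cleaner argument to aim for, and it is genuinely different from (and simpler than) a corrected dyadic decomposition.
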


\begin{proof}
If $\eta$ is a smooth, positive function, supported on $[-\frac{1}{2},\frac{1}{2}]$,
identically $1$ in a neighborhood of $0$, then
\[
L(x)\coloneqq\int_{0}^{1}(1/y)\eta(x/y)\,\dif y=\int_{0}^{1/x}(1/y)\eta\left(1/y\right)\,\dif y=\pi G(x)+k(x)
\]
for some smooth, compactly-supported function $k$. Then it is sufficient
to prove that $\|L*f\|_{\mathcal{C}^{\alpha+1}}\le C\|f\|_{\mathcal{C}^{\alpha}}$.
To do this, we note first that $(L*f)(x)=\int_{0}^{1}\left(\frac{1}{y}\eta\left(\frac{\cdot}{y}\right)*f\right)(x)\,\dif y$.
Fix $x<x'$ and note that
\[
(L*f)(x)-(L*f)(x')=\int_{0}^{1}\int\frac{1}{y}q_{x,x',y}\left(\frac{z}{y}\right)f(z)\,\dif z\,\dif y,
\]
where $q_{x,x',y}(z)=\eta(x/y-z)-\eta(x'/y-z)$ and we use the common
abuse of notation in which the integrals in $z$ are in fact pairings
with the distribution $f$. If $y\ge2|x-x'|$, then $\|q_{x,x',y}\|_{\mathcal{C}^{0}}\le\|\eta\|_{\mathcal{C}^{1}}\frac{|x-x'|}{y}$
and $\supp q_{x,x',y}$ is contained in an interval of width $2$,
so $\left|\int\frac{1}{y}q_{x,x',y}(z/y)f(z)\,\dif z\right|\le\|\eta\|_{\mathcal{C}^{1}}\|f\|_{\mathcal{C}^{\alpha}}|x-x'|y^{\alpha-1}$.
On the other hand, if $0<y\le2|x-x'|$, then $q_{x,x',y}$ can be
written as the sum of two $\mathcal{C}^{1}$ functions with support
contained in $[-\frac{1}{2},\frac{1}{2}]$, each with $\mathcal{C}^{0}$
norm $\|\eta\|_{\mathcal{C}^{0}}$, so $\left|\int\frac{1}{y}q_{x,x',y}\left(\frac{z}{y}\right)f(z)\,\dif z\right|\le2\|\eta\|_{\mathcal{C}^{0}}\|f\|_{\mathcal{C}^{\alpha}}y^{\alpha}$.
Therefore, we have, for a constant $C$ depending on $\eta$ but not
on $f$, that
\begin{align*}
\left|(L*f)(x)-(L*f)(x')\right| & \le C\left(\int_{0}^{2|x-x'|}y^{\alpha}\,\dif y+|x-x'|\int_{2|x-x'|}^{1}y^{\alpha-1}\,\dif y\right)\|f\|_{\mathcal{C}^{\alpha}}\\
 & \le C|x-x'|^{\alpha+1}\|f\|_{\mathcal{C}^{\alpha}}.
\end{align*}
The necessary bound on $|(L*f)(x)|$ is easier, so we omit it.
\end{proof}
The inverse of the fractional heat operator $\partial_{t}-\Lambda$
is the Cauchy kernel $P_{t}(x)=\frac{t}{\pi(t^{2}+x^{2})}$. We will
need the following Schauder-type estimate for this kernel.
\begin{lem}
\label{lem:fractionalheatregularity}For any $T<\infty$ and $\alpha<\beta$,
there is a $C<\infty$ so that for any function $f\in\mathcal{C}^{\alpha}$
and any $t\in(0,T]$, we have $P_{t}*f\in\mathcal{C}^{\infty}$ and
$\|P_{t}*f\|_{\mathcal{C}^{\beta}}\le Ct^{-(\beta-\alpha)}\|f\|_{\mathcal{C}^{\alpha}}$.
\end{lem}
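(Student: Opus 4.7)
My strategy is to split the general Schauder estimate into two complementary half-estimates via the semigroup property $P_t * P_s = P_{t+s}$ (immediate from $\widehat{P_t}(\xi) = \e^{-t|\xi|}$): a ``positive regularization'' step taking $L^\infty$ to $\mathcal{C}^\beta$, and a ``negative-input'' step taking $\mathcal{C}^\alpha$ to $L^\infty$. The common kernel input is the scaling identity $P_t(x) = t^{-1} P_1(x/t)$, which gives $\|\partial_x^k P_t\|_{L^1} = C_k t^{-k}$ for every $k \in \mathbf{N}$, since the derivatives of $P_1(x) = 1/(\pi(1+x^2))$ decay at least as $|x|^{-(k+2)}$.

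From the kernel bounds I would deduce that for $\beta \in (0,1)$, $\|P_t * g\|_{\mathcal{C}^\beta} \le C t^{-\beta} \|g\|_{L^\infty}$: the $L^\infty$ part is $\|P_t\|_{L^1} = 1$, and for the seminorm,
\[
\int |P_t(x-z) - P_t(y-z)|\,\dif z \le \min\bigl(2,\, C|x-y|/t\bigr) \le C(|x-y|/t)^\beta
\]
(the last inequality on any bounded range).

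The technical heart is the negative-input step: for $\alpha \in (-1, 0)$, I claim $\|P_t * f\|_{L^\infty} \le C t^\alpha \|f\|_{\mathcal{C}^\alpha}$. Writing $(P_t * f)(x)$ as the pairing of $f$ with $P_t(x - \cdot)$, I would perform a dyadic decomposition $P_t = \sum_{k \ge 0} \rho_k$, where $\rho_k$ is a smooth bump localized near $|\cdot| \sim 2^k t$. A direct computation using $P_t(z) \sim t/(t^2 + z^2)$ and the corresponding derivative bound shows that $\rho_k = c_k (\eta_k)_0^{\lambda_k}$ in the notation of \eqref{etaable}, with $\lambda_k \sim 2^k t$, $c_k \sim 2^{-k}$, and $\|\eta_k\|_{\mathcal{C}^1}$ uniformly bounded. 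For $\lambda_k \le 1$, the definition of the negative H\"older norm yields $|\langle f, \rho_k(x - \cdot) \rangle| \le C c_k \lambda_k^\alpha \|f\|_{\mathcal{C}^\alpha} = C 2^{k(\alpha - 1)} t^\alpha \|f\|_{\mathcal{C}^\alpha}$, summable in $k$ since $\alpha < 1$. For $\lambda_k > 1$ (at most $O(\log(1/t))$ indices), I would split $\rho_k$ further into $O(\lambda_k)$ translates of unit-scale bumps, bound each pairing by $\|f\|_{\mathcal{C}^\alpha}$ using \propref{wavelets}, and check that the tail contribution is $\lesssim t \|f\|_{\mathcal{C}^\alpha} \le t^\alpha \|f\|_{\mathcal{C}^\alpha}$ for $t$ bounded above.

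Combining via $P_t = P_{t/2} * P_{t/2}$, for $\alpha \in (-1, 0)$ and $\beta \in (0, 1)$ we obtain
\[
\|P_t * f\|_{\mathcal{C}^\beta} \le C t^{-\beta} \|P_{t/2} * f\|_{L^\infty} \le C t^{\alpha - \beta} \|f\|_{\mathcal{C}^\alpha},
\]
as required; the remaining sign-combinations $\alpha < \beta \le 0$ and $0 \le \alpha < \beta$ follow by routing through $L^\infty$ and the continuous embeddings $\mathcal{C}^\gamma \hookrightarrow \mathcal{C}^{\gamma'}$ for $\gamma > \gamma'$. I expect the main obstacle to be the negative-input step, specifically the bookkeeping at wide scales $\lambda_k > 1$ where the $\mathcal{C}^\alpha$ definition does not apply directly and one must invoke \propref{wavelets} over integer translates; the rest is essentially standard.
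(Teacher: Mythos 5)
Your proof is correct on the range $-1<\alpha<0<\beta<1$, which the paper explicitly says is all it proves and uses, but your route differs from the paper's. The paper runs a single parabolic dyadic decomposition $P_t^{(n)}(x)=\omega(2^n(t+|x|))P_t(x)$, exploits the scaling $P_t^{(n)}(x)=2^nP_{2^n t}^{(0)}(2^nx)$, interpolates the $\mathcal{C}^\beta$ norm between the $\mathcal{C}^0$ and $\mathcal{C}^1$ norms of each dyadic convolution, and sums $n$ from $0$ to $\lceil-\log_2 t\rceil+1$, absorbing the wide scales $n<0$ into a single tail kernel with a pointwise decay estimate. You instead factor through $P_t=P_{t/2}*P_{t/2}$: a classical regularizing half $L^\infty\to\mathcal{C}^\beta$ that never touches negative H\"older spaces (only $\|P_t\|_{L^1}=1$ and $\|\partial_xP_t\|_{L^1}\le C/t$), and a negative-input half $\mathcal{C}^\alpha\to L^\infty$ obtained by pairing a spatial dyadic decomposition of the kernel against the scaled test functions in the definition of $\|\cdot\|_{\mathcal{C}^\alpha}$. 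The semigroup split is a genuine structural simplification, and the negative-input half avoids the interpolation step entirely. Two small caveats. First, the parenthetical ``at most $O(\log(1/t))$ indices'' for $\lambda_k>1$ is backwards: there are infinitely many such $k$, namely all $k$ with $2^kt>1$. Your geometric-series bound $\sum_{2^kt>1}2^{-k}\le Ct\le Ct^\alpha$ for $t\le T$ is nonetheless correct, so the argument stands; just fix the remark. Second, dispatching the sign-combinations $0\le\alpha<\beta$ and $\alpha<\beta\le0$ ``by routing through $L^\infty$'' does not actually work: for example, if $0<\alpha<\beta$, then $\mathcal{C}^\alpha\hookrightarrow L^\infty\to\mathcal{C}^\beta$ gives only $t^{-\beta}$, not the sharper $t^{-(\beta-\alpha)}$ that the lemma claims. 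This is harmless here since the paper's own proof is explicitly restricted to the middle range, but those cases should not be presented as following trivially from embeddings.
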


\begin{proof}
This follows from a scaling argument analogous to that used in \cite[Lemma 2.8]{HL15}.
For completeness, we present the argument for the case $-1<\alpha<0<\beta<1$,
which is what we use. As in \cite[Lemma 5.5]{Hairer2014},\footnote{We cannot quite apply \cite[Lemma 5.5]{Hairer2014} as stated, since
the Cauchy kernel cannot be extended by zero at negative times to
a smooth function on $\mathbf{R}^{2}\setminus\{0\}$. However, this
property is not necessary for our application.} fix a smooth function $\omega:[0,\infty)\to[0,\infty)$ so that $\supp\omega\subset[\frac{1}{2},2]$
and $\sum\limits _{n\in\mathbf{Z}}\omega(2^{n}\cdot)\equiv1$. Then
define, for $t\ge0$ and $x\in\mathbf{R}$, $P_{t}^{(n)}(x)\coloneqq\omega(2^{n}(t+|x|))P_{t}(x)$,
so we have $\sum\limits _{n\in\mathbf{Z}}P_{t}^{(n)}=P_{t}$ and (since
$P_{t}(x)=2^{n}P_{2^{n}t}(2^{n}x)$) $P_{t}^{(n)}(x)=2^{n}P_{2^{n}t}^{(0)}(2^{n}x)$.
Define $P_{t}^{-}=\sum\limits _{n<0}P_{t}^{(n)}$ and $P_{t}^{+}=\sum\limits _{n\ge0}P_{t}^{(n)}$.
We note that there is a constant $C<\infty$ so that for all $t\in(0,T]$,
we have the estimate $|P_{t}^{-}(x)|,|\partial_{x}P_{t}^{-}(x)|\le C(1+|x|)^{-2}$
for all $x\in\mathbf{R}$. This implies that $\|P_{t}^{-}*f\|_{\mathcal{C}^{\beta}}\le C\|f\|_{\mathcal{C}^{\alpha}}$.
On the other hand, we have $\|P_{t}^{(0)}\|_{\mathcal{C}^{2}}\le C$
for all $t\in(0,T]$. Therefore, we have that $\|2^{n}P_{t}^{(0)}(2^{n}\cdot)*f\|_{\mathcal{C}^{0}}\le C2^{-n\alpha}\|f\|_{\mathcal{C}^{\alpha}}$
and $\|\partial_{x}[2^{n}P_{t}^{(0)}(2^{n}\cdot)]*f\|_{\mathcal{C}^{0}}\le C2^{n(1-\alpha)}\|f\|_{\mathcal{C}^{\alpha}}$,
so
\[
\|2^{n}P_{t}^{(0)}(2^{n}\cdot)*f\|_{\mathcal{C}^{\beta}}\le C\|2^{n}P_{t}^{(0)}(2^{n}\cdot)*f\|_{\mathcal{C}^{0}}^{1-\beta}\|2^{n}P_{t}^{(0)}(2^{n}\cdot)*f\|_{\mathcal{C}^{1}}^{\beta}\le C2^{n(\beta-\alpha)}\|f\|_{\mathcal{C}^{\alpha}}.
\]
We now complete the proof by concluding that
\begin{align*}
\|P_{t}^{+}*f\|_{\mathcal{C}^{\beta}}\le C\sum_{n=0}^{\lceil-\log_{2}t\rceil+1}\|2^{n}P_{2^{n}t}^{(0)}(2^{n}\cdot)*f\|_{\mathcal{C}^{\beta}} & \le C\|f\|_{\mathcal{C}^{\alpha}}\sum_{n=0}^{\lceil-\log_{2}t\rceil+1}2^{n(\beta-\alpha)}\le\frac{C\|f\|_{\mathcal{C}^{\alpha}}}{t^{\beta-\alpha}}.\qedhere
\end{align*}
\end{proof}

\section{The change of variables\label{sec:chgvar}}

In this section we explain the key change of variables that we perform
on \eqref{uepsproblem}. This change of variables is an analogue for
the fractional Laplacian of the change of variables performed in \cite[p.3]{HL15}.
The advantage of the change of variables is that the coefficients
of the new equation converge as $\eps\downarrow0$, and so an equation
is obtained for the limit.
\begin{lem}
\label{lem:changeofvariables}For $\eps\ge0$, let $S_{\eps}=-G*\xi_{\eps}$,
where $G$ is defined as in \lemref{Gdef}. For $\eps>0$, if we put
$u_{\eps}=\e^{S_{\eps}}v_{\eps}$, then $v_{\eps}$ satisfies
\begin{align}
\partial_{t}v_{\eps} & =\Lambda v_{\eps}+v_{\eps}[-F*\xi_{\eps}+Z_{\eps}]+\Xi_{\eps}v_{\eps}, & v_{\eps}(0,\cdot) & =\e^{-S_{\eps}}\underline{u},\label{eq:vepsPDE}
\end{align}
where
\begin{gather}
Z_{\eps}(x)=\tilde{Z}_{\eps}(x)-C_{\eps},\qquad\tilde{Z}_{\eps}(x)=\frac{1}{\pi}\pv\int\frac{\e^{S_{\eps}(y)-S_{\eps}(x)}-(1+S_{\eps}(y)-S_{\eps}(x))}{(y-x)^{2}}\,\dif y,\label{eq:Zdef}\\
\Xi_{\eps}w(x)=\frac{1}{\pi}\pv\int\frac{(\e^{S_{\eps}(y)-S_{\eps}(x)}-1)(w(y)-w(x))}{(y-x)^{2}}\,\dif y.\label{eq:Xidef}
\end{gather}
\end{lem}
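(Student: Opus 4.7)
Since $\xi_\eps\in\mathcal{C}^\infty$ and $G$ is compactly supported, $S_\eps=-G*\xi_\eps$ is smooth, bounded, and time-independent. Hence $\partial_{t}u_\eps=\e^{S_\eps}\partial_{t}v_\eps$, and by \lemref{Gdef},
\[
\Lambda S_\eps=-(\Lambda G)*\xi_\eps=-(\delta+F)*\xi_\eps=-\xi_\eps-F*\xi_\eps.
\]
The core of the proof is then to expand $\Lambda(\e^{S_\eps}v_\eps)$ pointwise and match the result against the original equation for $u_\eps$.

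I would carry out that expansion using the principal-value definition of $\Lambda$ together with the algebraic identity
\begin{align*}
\e^{S_\eps(y)}v_\eps(y)-\e^{S_\eps(x)}v_\eps(x) &= \e^{S_\eps(x)}\bigl[(v_\eps(y)-v_\eps(x)) + v_\eps(x)(\e^{S_\eps(y)-S_\eps(x)}-1)\\
&\qquad{}+(\e^{S_\eps(y)-S_\eps(x)}-1)(v_\eps(y)-v_\eps(x))\bigr].
\end{align*}
After dividing by $\pi(y-x)^{2}$ and taking the principal value, the three bracketed terms contribute (with common prefactor $\e^{S_\eps(x)}$) exactly $\Lambda v_\eps(x)$, $v_\eps(x)\cdot\frac{1}{\pi}\pv\int\frac{\e^{S_\eps(y)-S_\eps(x)}-1}{(y-x)^{2}}\,\dif y$, and $\Xi_\eps v_\eps(x)$ respectively. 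For the middle factor I would split $\e^{S_\eps(y)-S_\eps(x)}-1=(S_\eps(y)-S_\eps(x))+[\e^{S_\eps(y)-S_\eps(x)}-1-(S_\eps(y)-S_\eps(x))]$: the linear part integrates to $\pi\Lambda S_\eps(x)=-\pi\xi_\eps(x)-\pi(F*\xi_\eps)(x)$, while the quadratic remainder produces precisely $\pi\tilde{Z}_\eps(x)$ as in \eqref{Zdef}, the corresponding integral being absolutely convergent since the integrand is $O(1)$ near $y=x$ (second-order Taylor, using smoothness of $S_\eps$) and $O((y-x)^{-2})$ at infinity (using boundedness of $S_\eps$).

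Substituting the resulting expansion into $\partial_{t}u_\eps=\Lambda u_\eps+(\xi_\eps-C_\eps)u_\eps$ and dividing through by $\e^{S_\eps}$, the $-\xi_\eps v_\eps$ contribution coming from $\Lambda S_\eps$ exactly cancels the $+\xi_\eps v_\eps$ from the original nonlinearity, leaving
\[
\partial_{t}v_\eps=\Lambda v_\eps+v_\eps[-F*\xi_\eps+\tilde{Z}_\eps-C_\eps]+\Xi_\eps v_\eps=\Lambda v_\eps+v_\eps[-F*\xi_\eps+Z_\eps]+\Xi_\eps v_\eps,
\]
which is \eqref{vepsPDE}; the initial condition is immediate from $u_\eps(0,\cdot)=\underline{u}$. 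The argument is essentially a direct calculation, so there is no substantive analytic obstacle; the only care needed is to arrange the decomposition so that each principal-value integral is individually well-defined, which is what the cancellation within the common prefactor $\e^{S_\eps(x)}$ accomplishes, and which boundedness and smoothness of $S_\eps$ and $v_\eps$ then justify on each piece.
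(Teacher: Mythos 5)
Your proof is correct and takes essentially the same route as the paper: the paper dismisses the key identity $\e^{-S_{\eps}}\Lambda(\e^{S_{\eps}}v_{\eps})=\Lambda v_{\eps}+v_{\eps}\Lambda S_{\eps}+\Xi_{\eps}v_{\eps}+\widetilde{Z}_{\eps}v_{\eps}$ as ``straightforward to verify,'' and your algebraic decomposition of $\e^{S_\eps(y)}v_\eps(y)-\e^{S_\eps(x)}v_\eps(x)$ together with the split of $\e^{S_\eps(y)-S_\eps(x)}-1$ into linear and quadratic parts is precisely the verification being alluded to. The rest (computing $\Lambda S_\eps=-\xi_\eps-F*\xi_\eps$ and cancelling $\xi_\eps v_\eps$) matches the paper line for line.
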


\begin{proof}
The initial condition is clear, so it remains to verify the PDE. We
note that
\[
\partial_{t}v_{\eps}=\e^{-S_{\eps}}\partial_{t}u_{\eps}=\e^{-S_{\eps}}\left[\Lambda u_{\eps}+(\xi_{\eps}-C_{\eps})u_{\eps}\right]=\e^{-S_{\eps}}\Lambda(\e^{S_{\eps}}v_{\eps})+(\xi_{\eps}-C_{\eps})v_{\eps}.
\]
It is straightforward to verify that
\[
\e^{-S_{\eps}}\Lambda(\e^{S_{\eps}}v_{\eps})=\Lambda v_{\eps}+v_{\eps}\Lambda S_{\eps}+\Xi_{\eps}v_{\eps}+\widetilde{Z}_{\eps}v_{\eps}.
\]
Also, $\Lambda S_{\eps}=-F*\xi_{\eps}-\xi_{\eps}$ by the definition
of $F$. Thus we have
\[
\partial_{t}v_{\eps}=\Lambda v_{\eps}+v_{\eps}\Lambda S_{\eps}+\Xi_{\eps}v_{\eps}+\widetilde{Z}_{\eps}v_{\eps}+(\xi_{\eps}-C_{\eps})v_{\eps}=\Lambda v_{\eps}+[-F*\xi_{\eps}+Z_{\eps}]v_{\eps}+\Xi_{\eps}v_{\eps}.\qedhere
\]
\end{proof}
The definitions of $S_{\eps}$ and $\Xi_{\eps}$ make sense for $\eps=0$
as well. We let $S=S_{0}$ and $\Xi=\Xi_{0}$.

\section{Analytic estimates\label{sec:estimates}}

In this section we derive some purely analytic estimates that will
help us control the quantities on the right side of \eqref{vepsPDE}.
Following \cite[Section 3]{HL15} or \cite[Section 10.3]{Hairer2014},
define the norm, for any $m\in\mathbf{N}$, $\zeta\in\mathbf{R}$,
and smooth function $K$ on $\mathbf{R}\setminus\{0\}$,
\begin{equation}
\|K\|_{\zeta;m}=\sup_{\substack{k\in\mathbf{Z}\\
0\le k\le m
}
}\sup_{x\in\mathbf{R}\setminus\{0\}}|x|^{k-\zeta}|K^{(k)}(x)|,\label{eq:singnorm}
\end{equation}
where $K^{(k)}$ denotes the $k$th derivative of $K$. We note in
particular that, with $G$ defined as in \lemref{Gdef}, we have
\begin{equation}
\|G\|_{-\kappa;m}<\infty\label{eq:Ganynegative}
\end{equation}
for all $\kappa>0$ and all $m\in\mathbf{N}$. We define the notation
\begin{equation}
\square K(\alpha;y,z)=K(\alpha)-K(\alpha-y)-K(\alpha-z)+K(\alpha-y-z).\label{eq:square}
\end{equation}
Quantities of this form arise in the expressions for moments of \eqref{Zdef}--\eqref{Xidef}.
\begin{lem}
For each $\theta\in(0,1)$, there is a constant $C<\infty$ so that
for any smooth function $K$ on $\mathbf{R}\setminus\{0\}$ and $\alpha,y,z\in\mathbf{R}$,
we have
\begin{equation}
\left|\square K(\alpha;y,z)\right|\le C\|K\|_{1-\theta;1}(|y|\wedge|z|)^{1-\theta}\label{eq:almost-lipschitz}
\end{equation}
If we further assume that $|y|,|z|<|\alpha|/4$, then 
\begin{equation}
|\square K(\alpha;y,z)|\le C|y||z|\|K\|_{1-\theta;2}|\alpha|^{-1-\theta}.\label{eq:closebound}
\end{equation}
\end{lem}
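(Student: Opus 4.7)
The plan is to prove the second bound first (it is the cleaner of the two) and then turn to the first bound by a case split depending on whether the shift is small or comparable to the base point.

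For \eqref{closebound}, I would write $\square K$ as an iterated integral of $K''$. Starting from the fundamental theorem of calculus,
\[
K(\alpha)-K(\alpha-y)=\int_{0}^{y}K'(\alpha-t)\,\dif t,\qquad K(\alpha-z)-K(\alpha-y-z)=\int_{0}^{y}K'(\alpha-z-t)\,\dif t,
\]
so subtracting and applying the fundamental theorem of calculus once more in the $z$-direction gives
\[
\square K(\alpha;y,z)=\int_{0}^{y}\int_{0}^{z}K''(\alpha-t-s)\,\dif s\,\dif t.
\]
In the regime $|y|,|z|<|\alpha|/4$, every argument $\alpha-t-s$ appearing in the integrand satisfies $|\alpha-t-s|\ge|\alpha|/2$, so by the definition \eqref{singnorm} we have $|K''(\alpha-t-s)|\le 2^{1+\theta}\|K\|_{1-\theta;2}|\alpha|^{-1-\theta}$, and \eqref{closebound} follows by integrating.

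For \eqref{almost-lipschitz}, I would assume without loss of generality that $|y|\le|z|$ and split
\[
\square K(\alpha;y,z)=\bigl[K(\alpha)-K(\alpha-y)\bigr]-\bigl[K(\alpha-z)-K(\alpha-y-z)\bigr],
\]
so it suffices to prove a uniform bound $|K(\beta)-K(\beta-y)|\le C\|K\|_{1-\theta;1}|y|^{1-\theta}$ for arbitrary $\beta\in\mathbf{R}$. I would split on the relative sizes: if $|y|\ge|\beta|/2$, I use the triangle inequality together with the pointwise bound $|K(\cdot)|\le\|K\|_{1-\theta;1}|\cdot|^{1-\theta}$, noting that both $|\beta|$ and $|\beta-y|$ are then at most a constant multiple of $|y|$. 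If $|y|<|\beta|/2$, the segment from $\beta-y$ to $\beta$ stays away from the singularity at $0$, so the mean value theorem and the bound $|K'(\cdot)|\le\|K\|_{1-\theta;1}|\cdot|^{-\theta}$ give $|K(\beta)-K(\beta-y)|\le C\|K\|_{1-\theta;1}|y||\beta|^{-\theta}$; since $|y|<|\beta|/2$ implies $|\beta|^{-\theta}\le 2^{\theta}|y|^{-\theta}$, this again yields $C\|K\|_{1-\theta;1}|y|^{1-\theta}$.

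The main (very mild) obstacle is that $K$ has a singularity at $0$, so mean-value-type arguments cannot be applied on segments that cross the origin; this is precisely why the proof of \eqref{almost-lipschitz} has to be split into two cases. Apart from this, both bounds reduce to careful bookkeeping using the definition \eqref{singnorm}.
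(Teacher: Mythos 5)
Your proof is correct, and for \eqref{closebound} it is essentially identical to the paper's argument (iterate the fundamental theorem of calculus to express $\square K$ as a double integral of $K''$, use $|\alpha-t-s|\ge|\alpha|/2$ on the integration domain, and invoke the $k=2$ part of \eqref{singnorm}). For \eqref{almost-lipschitz}, the two arguments differ. The paper also reduces to bounding a single increment $|K(w)-K(x)|$, but it applies the fundamental theorem of calculus directly across the (possible) singularity at the origin, using that $|K'(t)|\le\|K\|_{1-\theta;1}|t|^{-\theta}$ is locally integrable and that $\int_{x}^{w}|t|^{-\theta}\,\dif t$ is maximized, for fixed length $w-x$, when the interval is centered at $0$; this gives $|K(w)-K(x)|\le C\|K\|_{1-\theta;1}|w-x|^{1-\theta}$ in one stroke. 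You instead split by whether the shift $y$ is small or large compared to the base point $\beta$, using the $k=1$ part of \eqref{singnorm} together with the mean value theorem in the far regime, and the $k=0$ part together with the triangle inequality in the near regime. Both routes are valid; yours has the mild advantage that it never integrates across the origin, so it does not implicitly rely on the (true, but unstated in the paper) fact that $K$ extends absolutely continuously through $0$ when $\|K\|_{1-\theta;1}<\infty$ with $\theta\in(0,1)$, while the paper's is a bit shorter because the centering observation absorbs the case split. The minimum $(|y|\wedge|z|)^{1-\theta}$ appears in the paper by symmetrizing the pairing of terms in $\square K$, and in yours by the harmless normalization $|y|\le|z|$; these are the same thing.
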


\begin{proof}
By the fundamental theorem of calculus and \eqref{singnorm}, we have
for $x<w$ that
\begin{align*}
|K(w)-K(x)|\le\int_{x}^{w}\|K\|_{1-\theta;1}|t|^{-\theta}\,\dif t & \le\|K\|_{1-\theta;1}\int_{-\frac{w-x}{2}}^{\frac{w-x}{2}}|t|^{-\theta}\,\dif t\le C\|K\|_{1-\theta;1}|w-x|^{1-\theta}.
\end{align*}
Thus by the triangle inequality we have
\[
|\square K(\alpha;y,z)|\le|K(\alpha)-K(\alpha-y)|+|K(\alpha-z)-K(\alpha-y-z)|\le C\|K\|_{1-\theta;1}|y|^{1-\theta},
\]
and similarly with $y$ and $z$ exchanged. This proves \eqref{almost-lipschitz}.
Now assume that $|y|,|z|<|\alpha|/4$. If $F(w,x)=K(\alpha-w-x)$,
then
\begin{align*}
|\square K(\alpha;y,z)| & =\left|\int_{0}^{y}\int_{0}^{z}K''(\alpha-w-x)\,\dif w\,\dif x\right|\\
 & \le\|K\|_{1-\theta;2}\left|\int_{0}^{y}\int_{0}^{z}(\alpha-w-x)^{-1-\theta}\,\dif w\,\dif x\right|\le C|y||z|\|K\|_{1-\theta;2}|\alpha|^{-1-\theta}.
\end{align*}
This proves \eqref{closebound}. 
\end{proof}
\begin{lem}
\label{lem:Hbound}For each $\theta\in(0,1)$ and all $M<\infty$,
there is a constant $C<\infty$ so that for any smooth functions $H_{1},H_{2}:\mathbf{R}\setminus\{0\}\to\mathbf{R}$
and all $\alpha\in\mathbf{R}$ we have
\begin{equation}
\int_{\mathbf{R}^{2}}\frac{\square H_{1}(\alpha;y,z)\cdot\square H_{2}(\alpha;y,z)}{y^{2}z^{2}}\,\dif y\,\dif z\le C\|H_{1}\|_{1-\theta;2}\|H_{2}\|_{1-\theta;2}|\alpha|^{-2\theta}.\label{eq:Ibound}
\end{equation}
\end{lem}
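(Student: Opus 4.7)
The plan is to decompose $\mathbf{R}^{2}$ into regions in which different pointwise bounds on $\square H_{i}$ are tight, and then integrate in each region. Specifically, I split into the inner region $R_{\mathrm{in}}=\{|y|,|z|<|\alpha|/4\}$, the outer region $R_{\mathrm{out}}=\{|y|,|z|\ge|\alpha|/4\}$, and the two mixed regions, of which I treat $R_{\mathrm{mix}}=\{|y|<|\alpha|/4\le|z|\}$ (the other is symmetric under $y\leftrightarrow z$).

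For $R_{\mathrm{in}}$, applying \eqref{closebound} to both factors bounds the integrand by $C\|H_{1}\|_{1-\theta;2}\|H_{2}\|_{1-\theta;2}|\alpha|^{-2-2\theta}$, and multiplying by the area $\asymp|\alpha|^{2}$ gives the claimed bound. For $R_{\mathrm{out}}$, by symmetry I assume $|y|\le|z|$ and apply \eqref{almost-lipschitz} to bound the integrand by $C\prod_{i}\|H_{i}\|_{1-\theta;1}|y|^{-2\theta}z^{-2}$; integrating first in $z$ over $\{|z|\ge|y|\}$ and then in $y$ yields $\int_{|\alpha|/4}^{\infty}|y|^{-1-2\theta}\,\dif y\asymp|\alpha|^{-2\theta}$.

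The mixed region $R_{\mathrm{mix}}$ is the main technical difficulty, since neither \eqref{almost-lipschitz} nor \eqref{closebound} is sharp enough there. For this region I would establish the intermediate bound
\[
|\square H(\alpha;y,z)|\le C\|H\|_{1-\theta;1}|y|\bigl(|\alpha|^{-\theta}+(|\alpha-z|+|y|)^{-\theta}\bigr)
\]
from the representation $\square H(\alpha;y,z)=\int_{0}^{y}[H'(\alpha-s)-H'(\alpha-z-s)]\,\dif s$: the first summand uses $|H'(\alpha-s)|\le C\|H\|_{1-\theta;1}|\alpha|^{-\theta}$ (which holds because $|\alpha-s|\ge 3|\alpha|/4$), and the second uses the elementary estimate $\int_{0}^{y}|\alpha-z-s|^{-\theta}\,\dif s\le C|y|(|\alpha-z|+|y|)^{-\theta}$, which I verify separately for $|\alpha-z|\ge 2|y|$ (where the integrand is $\asymp|\alpha-z|^{-\theta}$) and for $|\alpha-z|<2|y|$ (where direct computation gives at most $C|y|^{1-\theta}$, even when the integration interval crosses $0$).

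Squaring and using $(a+b)^{2}\le 2(a^{2}+b^{2})$, the contribution of $R_{\mathrm{mix}}$ splits into a trivial piece proportional to $|\alpha|^{-2\theta}z^{-2}$ and a piece proportional to $(|\alpha-z|+|y|)^{-2\theta}z^{-2}$. The trivial piece integrates immediately. For the second piece I split $|z|\ge|\alpha|/4$ into $|z|>2|\alpha|$ (where $|\alpha-z|\asymp|z|$ and the integral collapses) and $|\alpha|/4\le|z|\le 2|\alpha|$, in which subregion I use $z^{-2}\le C|\alpha|^{-2}$ and substitute $w=\alpha-z$ to reduce the task to estimating
\[
|\alpha|^{-2}\iint_{|y|,|w|\lesssim|\alpha|}(|w|+|y|)^{-2\theta}\,\dif y\,\dif w.
\]
Rescaling $(y,w)\mapsto|\alpha|(y,w)$ shows this equals $|\alpha|^{-2\theta}$ times a dimensionless integral whose convergence at the origin---the integrand scales as $r^{1-2\theta}$ in polar-like coordinates---requires exactly $\theta<1$, matching the hypothesis. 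The main obstacle is this last subregion, where both $|y|$ and $|\alpha-z|$ can be small and the refinement with combined factor $(|\alpha-z|+|y|)^{-\theta}$ (rather than the naive $|y|^{-\theta}$ implicit in \eqref{almost-lipschitz}) is essential.
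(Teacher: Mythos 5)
Your proof is correct, and it is actually sharper than the paper's. The paper uses only a two-region decomposition: the inner region $\{|y|,|z|<|\alpha|/4\}$ (handled with \eqref{eq:closebound}, identically to your $R_{\mathrm{in}}$) and its complement $\{|y|\vee|z|\ge|\alpha|/4\}$, on which it applies \eqref{eq:almost-lipschitz} to both factors and bounds the integral by
\[
C\int_{|\alpha|/4\le|y|\vee|z|}\frac{(|y|\wedge|z|)^{2(1-\theta)}}{y^{2}z^{2}}\,\dif y\,\dif z.
\]
This is terse, but the integrand behaves like $(|y|\wedge|z|)^{-2\theta}(|y|\vee|z|)^{-2}$, so the inner ($|y|\wedge|z|$) integral converges at $0$ only when $\theta<1/2$; as written the paper's estimate of $I_{2}$ diverges for $\theta\ge1/2$, even though the lemma is stated for $\theta\in(0,1)$. (This does not propagate to errors elsewhere, since the lemma is only invoked with $\theta$ equal to small multiples of $\kappa$.) You isolate exactly the problematic subregion as $R_{\mathrm{mix}}$, where $|y|$ is small and $|z|$ is comparable to $\alpha$, and replace \eqref{eq:almost-lipschitz} there by the sharper intermediate bound $|\square H(\alpha;y,z)|\le C\|H\|_{1-\theta;1}|y|\bigl(|\alpha|^{-\theta}+(|\alpha-z|+|y|)^{-\theta}\bigr)$ obtained from the single-integral representation of $\square H$; the extra factor of $|y|$ (absent from \eqref{eq:almost-lipschitz}) kills the $y^{-2\theta}$ singularity, and the residual $(|\alpha-z|+|y|)^{-2\theta}$ term is handled by the substitution $w=\alpha-z$ and scaling, which converges precisely when $\theta<1$. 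So the paper's proof buys brevity at the cost of a hidden restriction to $\theta<1/2$; yours costs a third region and one extra pointwise estimate but covers the full stated range.
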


\begin{proof}
The left side of \eqref{Ibound} can be written as $I_{1}+I_{2}$,
where $I_{1}$ is the integral over the domain $\{|y|,|z|<|\alpha|/4\}$
and $I_{2}$ is the integral over the domain $\{|y|\vee|z|\ge\alpha/4\}$.
By \eqref{closebound},
\begin{equation}
I_{1}\le C\|H_{1}\|_{1-\theta;2}\|H_{2}\|_{1-\theta;2}|\alpha|^{-2\theta},\label{eq:I1bound}
\end{equation}
while by \eqref{almost-lipschitz},
\begin{equation}
\frac{I_{2}}{\|H_{1}\|_{1-\theta;1}\|H_{2}\|_{1-\theta;1}}\le C\int_{|\alpha|/4\le|y|\vee|z|}\frac{(|y|\wedge|z|)^{2(1-\theta)}}{y^{2}z^{2}}\,\dif y\,\dif z\le C|\alpha|^{-2\theta}.\label{eq:I2bound}
\end{equation}
Combining \eqref{I1bound} and \eqref{I2bound} yields \eqref{Ibound}.
\end{proof}

\section{Stability of the coefficients of the equation for \texorpdfstring{$v_\eps$}{v\_ε}\label{sec:coefficients}}

In this section we prove that the coefficients of the equation \eqref{vepsPDE}
are stable as we eliminate the spatial mollification of the noise.
We will consider the coefficients of \eqref{vepsPDE} in turn. The
stability of $F*\xi_{\eps}$ will come directly from the decay \eqref{Fbd}
of $F$ and $F'$. We will consider the term $Z_{\eps}$, which requires
renormalization, in \subsecref{Zeps}. We bound the size of the renormalization
constant $C_{\eps}$ in \subsecref{renormalization}. Then we show
the stability of the nonlocal operator $\Xi_{\eps}$ in \subsecref{Xieps}.
We will use the following two basic lemmas.
\begin{lem}
\label{lem:periodic-xi-regularity}For any $\kappa>0$ and $\eps\ge0$,
we have that $\xi_{\eps}\in\mathcal{C}^{-\frac{1}{2}-\kappa}$ almost
surely. Also, $\xi_{\eps}\to\xi$ as $\eps\to0$ in probability in
$\mathcal{C}^{-\frac{1}{2}-\kappa}$.
\end{lem}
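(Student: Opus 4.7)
The plan is to apply the wavelet characterization \propref{wavelets} with $\alpha=-\tfrac{1}{2}-\kappa$ and reduce both claims to Gaussian moment estimates on the wavelet coefficients of $\xi_\eps$ (and of $\xi-\xi_\eps$ for the convergence), then apply Gaussian hypercontractivity and a Borel--Cantelli-type sum over scales.

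First I would establish the basic variance bound: since $\xi_\eps=\rho_\eps*\xi$ with $\rho_\eps$ of $L^1$-norm one, Young's inequality together with the explicit covariance $\mathbf{E}\xi(x)\xi(y)=\sum_k\delta(x-y+kL)$ gives
\[
\mathbf{E}|\langle\xi_\eps,\phi\rangle|^{2}=\sum_{k\in\mathbf{Z}}\int_{\mathbf{R}}(\rho_\eps*\phi)(x)(\rho_\eps*\phi)(x+kL)\,\dif x\le C\|\phi\|_{L^{2}}^{2}
\]
uniformly in $\eps\ge 0$, where $C$ depends only on the number of nonzero terms in $k$, which is bounded provided the support of $\phi$ is not too large relative to $L$. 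Applied to $\phi=\psi_{2^{-n}x}^{2^{-n}}$ this gives $\mathbf{E}|\langle\xi_\eps,\psi_{2^{-n}x}^{2^{-n}}\rangle|^{2}\le C\cdot 2^{n}$ for all $n$ large enough, with the finitely many coarse scales handled separately by a crude bound. Because $\xi_\eps$ is $L$-periodic, the wavelet coefficient at scale $n$ is itself periodic in $x\in\mathbf{Z}$ with period $O(L\cdot 2^{n})$, so the supremum over $x\in\mathbf{Z}$ in \propref{wavelets} reduces at each scale to a supremum over at most $O(L\cdot 2^{n})$ values.

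Combining Gaussian hypercontractivity (equivalence of $L^p$ norms for Gaussians) with the union bound over these $O(L\cdot 2^{n})$ translates yields, for each large even $p$,
\[
\mathbf{E}\sup_{x\in\mathbf{Z}}\bigl|2^{-n(1/2+\kappa)}\langle\xi_\eps,\psi_{2^{-n}x}^{2^{-n}}\rangle\bigr|^{p}\le C_{p}\cdot 2^{n(1-p\kappa)}.
\]
Choosing $p$ with $p\kappa>2$ makes this summable in $n$, and the $\phi_x^1$ contribution in \propref{wavelets} (with only finitely many distinct values by periodicity) is handled the same way. This produces $\mathbf{E}\|\xi_\eps\|_{\mathcal{C}^{-1/2-\kappa}}^{p}\le C_{p}$ uniformly in $\eps\ge 0$, which in particular implies $\xi_\eps\in\mathcal{C}^{-1/2-\kappa}$ almost surely. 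For the convergence, I would rerun the chain with $\xi-\xi_\eps$ in place of $\xi_\eps$, using the interpolated mollifier estimate $\|(\mathrm{Id}-\rho_\eps*)\phi\|_{L^{2}}^{2}\le C\min(\|\phi\|_{L^{2}}^{2},\eps^{2}\|\phi'\|_{L^{2}}^{2})$. At $\phi=\psi_{2^{-n}x}^{2^{-n}}$ the two extremes read $C\cdot 2^{n}$ and $C\eps^{2}\cdot 2^{3n}$, so interpolating with parameter $\beta\in(0,\kappa)$ gives variance at most $C\eps^{2\beta}\cdot 2^{n(1+2\beta)}$, and the same chaining then yields $\mathbf{E}\|\xi-\xi_\eps\|_{\mathcal{C}^{-1/2-\kappa}}^{p}\le C_{p}\eps^{p\beta}\to 0$, which is stronger than convergence in probability.

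The only point that requires bookkeeping is aligning the variance bound with the periodicity: the covariance sum over $k$ must be controlled uniformly in the scale, which forces the wavelet support to lie in a fundamental domain of the covariance kernel. This is why the coarse scales are split off and handled as a finite-dimensional correction contributing only a bounded constant. There is no genuine analytic obstacle beyond what is already present in the non-periodic Euclidean white-noise regularity estimate.
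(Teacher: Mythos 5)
Your proposal is correct and takes essentially the same approach as the paper: the paper's proof is a one-line reference to \cite[Lemma 1.1]{HL15}, which is precisely the strategy you carry out—bound the second moments of wavelet coefficients via the periodic covariance, invoke Gaussian moment equivalence, use periodicity to reduce the supremum over translates at scale $2^{-n}$ to $O(L\cdot 2^n)$ terms, sum over scales with $p\kappa$ large enough, and then rerun the argument for $\xi-\xi_\eps$ with the interpolated mollifier estimate to obtain convergence in probability.
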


\begin{proof}
This is a simple estimate using \propref{wavelets} as in \cite[Lemma 1.1]{HL15}.
\end{proof}
\begin{lem}
\label{lem:Sepsregularity}For any $\kappa>0$, we have for each $\eps\ge0$
that $S_{\eps}\in\mathcal{C}^{\frac{1}{2}-\kappa}$ almost surely.
(Recall that $S_{\eps}$ was defined in \lemref{changeofvariables}.)
Also, $S_{\eps}\to S$ as $\eps\to0$ in probability in $\mathcal{C}^{\frac{1}{2}-\kappa}$.
\end{lem}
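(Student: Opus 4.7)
The plan is to combine the almost-sure regularity of $\xi_\eps$ from \lemref{periodic-xi-regularity} with the Schauder-type estimate for $G$ from \lemref{fractionalGreensregularity} and the linearity of convolution. Since $S_\eps = -G * \xi_\eps$ by definition, both claims reduce to bounding a convolution.

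Concretely, I would fix an arbitrary $\kappa > 0$ and pick an auxiliary parameter $\kappa' \in (0,\kappa)$ (for instance $\kappa' = \kappa/2$), which ensures that $-\frac{1}{2}-\kappa' \in (-1,0)$ and $-\frac{1}{2}-\kappa'+1 = \frac{1}{2}-\kappa' > \frac{1}{2}-\kappa$. By \lemref{periodic-xi-regularity}, for each $\eps \ge 0$ we have $\xi_\eps \in \mathcal{C}^{-\frac{1}{2}-\kappa'}$ almost surely. Applying \lemref{fractionalGreensregularity} with $\alpha = -\frac{1}{2}-\kappa'$ then gives
\[
\|S_\eps\|_{\mathcal{C}^{\frac{1}{2}-\kappa}} \le \|S_\eps\|_{\mathcal{C}^{\frac{1}{2}-\kappa'}} \le C\|\xi_\eps\|_{\mathcal{C}^{-\frac{1}{2}-\kappa'}} < \infty
\]
almost surely, which proves the first assertion.

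For the convergence statement, I would exploit the linearity of the convolution with $G$. Since $S_\eps - S = -G*(\xi_\eps - \xi)$, the same application of \lemref{fractionalGreensregularity} yields
\[
\|S_\eps - S\|_{\mathcal{C}^{\frac{1}{2}-\kappa}} \le C\|\xi_\eps - \xi\|_{\mathcal{C}^{-\frac{1}{2}-\kappa'}}.
\]
The second half of \lemref{periodic-xi-regularity} asserts that $\xi_\eps \to \xi$ in probability in $\mathcal{C}^{-\frac{1}{2}-\kappa'}$, so the bound above immediately gives $S_\eps \to S$ in probability in $\mathcal{C}^{\frac{1}{2}-\kappa}$.

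There is no real obstacle here; the lemma is a direct corollary of the two earlier results, with the only bookkeeping being to choose an auxiliary regularity $\kappa' < \kappa$ so that \lemref{fractionalGreensregularity} applies with $\alpha \in (-1,0)$ and produces the desired target exponent. No further analytic input is needed beyond these two building blocks.
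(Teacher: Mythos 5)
Your proof is correct and follows the same approach as the paper, which simply cites \lemref{periodic-xi-regularity} and \lemref{fractionalGreensregularity}. The bookkeeping step of introducing $\kappa' < \kappa$ to keep the exponent $-\frac{1}{2}-\kappa'$ inside $(-1,0)$ is a sensible way to make the one-line argument rigorous for all $\kappa>0$.
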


\begin{proof}
This follows from \lemref{periodic-xi-regularity} by \lemref{fractionalGreensregularity},
similarly to \cite[Corollary 1.2]{HL15}.
\end{proof}

\subsection{Stability of \texorpdfstring{$Z_\eps$}{Z\_ε}\label{subsec:Zeps}}

For $\eps\ge0$, define $S_{\eps}(y,x)=S_{\eps}(y)-S_{\eps}(x)$.
(Norms of the form $\|S_{\eps}\|_{\bullet}$ will continue to refer
to the one-variable function $S_{\eps}$.) For $\eps>0$, we fix
\begin{equation}
C_{\eps}=\frac{1}{2\pi}\int\frac{\mathbf{E}S_{\eps}(y,0)^{2}}{y^{2}}\,\dif y.\label{eq:Cepsdef}
\end{equation}
We will prove in \propref{Ceps-value} below that the integrals on
the right side of \eqref{Cepsdef} are well-defined. For $\eta\in\mathcal{C}_{\mathrm{c}}^{1}$
and $\eps\ge0$, put
\begin{equation}
Z_{\eps}(\eta)=\frac{1}{\pi}\iint\frac{\eta(x)}{(y-x)^{2}}\left[\e^{S_{\eps}(y,x)}-\left(1+S_{\eps}(y,x)+\frac{1}{2}\mathbf{E}S_{\eps}(y,x)^{2}\right)\right]\,\dif y\,\dif x,\label{eq:Zepsdistndef}
\end{equation}
so $Z_{\eps}$ is a distribution. We will show that $Z_{\eps}\in\mathcal{C}^{-\kappa}$
for any $\kappa>0$ and $\eps\ge0$ in \propref{Zepsconv} below.
For $\eps>0$, \eqref{Zepsdistndef} agrees with \eqref{Zdef} with
the choice \eqref{Cepsdef} of $C_{\eps}$.

We split $Z_{\eps}$ into two parts. For $\eps\ge0$ and $\eta\in\mathcal{C}_{\mathrm{c}}^{1}$,
define
\begin{equation}
U_{\eps}(\eta)=\frac{1}{2\pi}\iint\frac{\eta(x)}{(y-x)^{2}}[S_{\eps}(y,x)^{2}-\mathbf{E}S_{\eps}(y,x)^{2}]\,\dif y\,\dif x,\label{eq:Uepsdef}
\end{equation}
and for $\eps\ge0$ and $x\in\mathbf{R}$ define
\begin{equation}
V_{\eps}(x)=\frac{1}{\pi}\int\frac{\e^{S_{\eps}(y,x)}-[1+S_{\eps}(y,x)+\frac{1}{2}S_{\eps}(y,x)^{2}]}{(y-x)^{2}}\,\dif y.\label{eq:Vepsdef}
\end{equation}
Now evidently $Z_{\eps}=U_{\eps}+V_{\eps}$. The main goal of this
section is to prove the following.
\begin{prop}
\label{prop:Zepsconv}Let $\kappa>0$. For every $\eps\ge0$, we have
$Z_{\eps}\in\mathcal{C}^{-\kappa}$ almost surely. Moreover, $Z_{\eps}\to Z_{0}$
in probability in $\mathcal{C}^{-\kappa}$.
\end{prop}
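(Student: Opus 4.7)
The plan is to use the decomposition $Z_\eps = U_\eps + V_\eps$ from \eqref{Uepsdef}--\eqref{Vepsdef} and handle the two pieces by different tools: $V_\eps$ pathwise from the Hölder regularity of $S_\eps$, and $U_\eps$ via second-chaos stochastic estimates built on \lemref{Hbound}.

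First I would control $V_\eps$ as a genuine function. Writing $R(\alpha) = \e^\alpha - 1 - \alpha - \tfrac{1}{2}\alpha^2$, we have $|R(\alpha)| \le \tfrac{1}{6}|\alpha|^3\e^{|\alpha|}$. By \lemref{Sepsregularity}, and the fact that $S_\eps$ is a periodic continuous function (since $\xi$ is periodic and $G$ is compactly supported by \lemref{Gdef}), we have for $|y-x|\le 1$ that the integrand in \eqref{Vepsdef} is bounded by $C|y-x|^{-1/2-3\kappa}$, and for $|y-x|>1$ that it decays like $|y-x|^{-2}$; both are integrable. A nearly identical estimate on $V_\eps(x)-V_\eps(x')$ then puts $V_\eps\in\mathcal{C}^\alpha$ for some $\alpha>0$, with continuous dependence on $\|S_\eps\|_{\mathcal{C}^{1/2-\kappa}}$. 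Applying the same machinery to $V_\eps - V_0$ via a mean value estimate on $R$, and invoking \lemref{Sepsregularity}, gives $V_\eps\to V_0$ in $\mathcal{C}^{-\kappa}$ (in fact in $\mathcal{C}^{\alpha}$) in probability.

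The harder piece is $U_\eps$, which by the explicit subtraction in \eqref{Uepsdef} is a random distribution in the second Wiener chaos of $\xi_\eps$. By \propref{wavelets}, Gaussian hypercontractivity (which makes all $L^p$ norms comparable to $L^2$), and a Kolmogorov criterion applied through the wavelet basis, proving the proposition reduces to showing
\begin{align*}
\mathbf{E}|U_\eps(\eta_x^\lambda)|^2 &\le C\lambda^{-2\kappa'}, & \mathbf{E}|U_\eps(\eta_x^\lambda)-U_0(\eta_x^\lambda)|^2 &\to 0 \text{ as } \eps\to 0,
\end{align*}
uniformly in $x\in\R$ and $\lambda\in(0,1]$ for some $0<\kappa'<\kappa$. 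Setting $K_\eps(y) = \mathbf{E}[S_\eps(y)S_\eps(0)]$, the Isserlis/Wick identity $\mathbf{E}[(X^2-\mathbf{E}X^2)(Y^2-\mathbf{E}Y^2)] = 2(\mathbf{E}XY)^2$ for jointly Gaussian $X,Y$ yields
\begin{equation*}
\mathbf{E}|U_\eps(\eta)|^2 = \frac{1}{2\pi^2}\iiiint \frac{\eta(x_1)\eta(x_2)\,[\square K_\eps(y_1-y_2;\,x_2-y_2,\,y_1-x_1)]^2}{(y_1-x_1)^2(y_2-x_2)^2}\,\dif y_1\,\dif x_1\,\dif y_2\,\dif x_2,
\end{equation*}
after identifying $\mathbf{E}[S_\eps(y_1,x_1)S_\eps(y_2,x_2)] = \square K_\eps(y_1-y_2;\,x_2-y_2,\,y_1-x_1)$ via \eqref{square}. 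Changing variables $y_i\mapsto y_i-x_i$ in the inner double integral in $(y_1,y_2)$ puts it exactly in the form of \lemref{Hbound}, giving a bound $C\|K_\eps\|_{1-\theta;2}^{2}|x_1-x_2|^{-2\theta}$; integrating $|x_1-x_2|^{-2\theta}$ against the bumps $\eta_x^\lambda$ then produces the factor $\lambda^{-2\theta}$, and any $\theta<\kappa$ suffices.

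The essential analytic input is the uniform bound $\sup_{\eps\in[0,1]}\|K_\eps\|_{1-\theta;2}<\infty$ for every $\theta\in(0,1)$. This follows by writing $K_\eps = (G*G)*\widetilde{\rho}_\eps$, where $\widetilde{\rho}_\eps$ is the $L$-periodization of $\rho_\eps*\check{\rho}_\eps$, and observing that $G$ has only a logarithmic singularity at $0$ (\lemref{Gdef}, \eqref{Ganynegative}), so $G*G$ has regularity close to $|x|$ at $0$. The convergence $\mathbf{E}|(U_\eps-U_0)(\eta)|^2\to 0$ then follows by dominated convergence in the covariance integral, once one notes that $K_\eps\to K_0$ pointwise and $\|K_\eps-K_0\|_{1-\theta;2}\to 0$. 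The main obstacle I expect is the bookkeeping needed to identify the $\square K_\eps$ structure and align the change of variables so that \lemref{Hbound} applies cleanly, together with verifying the uniform control of $\|K_\eps\|_{1-\theta;2}$; once these are in hand the remainder is standard wavelet--Kolmogorov machinery.
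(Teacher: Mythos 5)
Your proposal follows the paper's strategy exactly: decompose $Z_\eps = U_\eps + V_\eps$, handle $V_\eps$ pathwise via Taylor bounds on $\e^{\alpha}-1-\alpha-\alpha^{2}/2$ together with the $\mathcal{C}^{\frac{1}{2}-\kappa}$ regularity of $S_\eps$ (\lemref{Sepsregularity}), and handle $U_\eps$ via its second-Wiener-chaos structure, the Isserlis identity, \lemref{Hbound}, and the wavelet plus Gaussian-hypercontractivity machinery behind \propref{wavelets}. The paper is content to put $V_\eps$ in $L^\infty$, which already embeds in $\mathcal{C}^{-\kappa}$; your claim that $V_\eps\in\mathcal{C}^{\alpha}$ for some $\alpha>0$ is true but unnecessary.

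Two points need tightening. First, and more importantly, as literally stated your ``essential analytic input'' $\sup_{\eps}\|K_\eps\|_{1-\theta;2}<\infty$ is false: $K_\eps(0)=\mathbf{E}\,S_\eps(0)^2\ne0$, while the norm \eqref{singnorm} forces $|K(x)|\lesssim|x|^{1-\theta}$ near the origin, so $\|K_\eps\|_{1-\theta;2}=\infty$. The fix is to observe that $\square$ annihilates constants, so $\square K_\eps=\square\bigl(K_\eps-K_\eps(0)\bigr)$, and what you actually need is $\sup_\eps\|K_\eps-K_\eps(0)\|_{1-\theta;2}<\infty$ (and similarly a quantitative bound on $\|(K_\eps-K_\eps(0))-(K_0-K_0(0))\|_{1-\theta;2}$ for the convergence step); this is precisely what the paper does when it writes $\|H_\eps-H_\eps(0)\|_{1-\frac{\kappa}{2};m}\le C$ and invokes ``the $\square$ operator does not see constants.'' Second, your change of variables $y_i\mapsto y_i-x_i$ yields $\square K_\eps\bigl(u_1-u_2+x_1-x_2;-u_2,u_1\bigr)$, whose first argument still depends on the inner variables; you need the additional identity $\square K(\alpha;y,z)=\square K(\alpha-y-z;-y,-z)$ to rewrite this as $\square K_\eps\bigl(x_1-x_2;-u_1,u_2\bigr)$ before \lemref{Hbound} applies with $\alpha=x_1-x_2$. (The paper's parametrization $\square H_\eps(x-w;x-y,z-w)$ avoids this extra step.) Also, your phrase ``$\mathbf{E}|U_\eps(\eta_x^\lambda)-U_0(\eta_x^\lambda)|^2\to0$ uniformly in $x$ and $\lambda$'' cannot hold literally; what the wavelet--Kolmogorov argument requires, and what the paper proves in \lemref{UepsminusUbound}, is a bound of the form $C_\eps\lambda^{-2\theta}$ with $C_\eps\to0$, which does follow from $\|(K_\eps-K_\eps(0))-(K_0-K_0(0))\|_{1-\theta;2}\to0$ via the same computation as the variance bound. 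With these corrections the proposal reproduces the paper's proof.
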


To prove \propref{Zepsconv}, we will show that $U_{\eps}$ and $V_{\eps}$
are both stable in $\mathcal{C}^{-\kappa}$ as $\eps\to0$. This is
the advantage of the change of variables carried out in \secref{chgvar},
since the coefficients of the original equation \eqref{uepsproblem}
do not converge as $\eps\to0$.

\subsubsection{Stability of \texorpdfstring{$V_\eps$}{V\_ε}}

First we consider $V_{\eps}$, which is actually stable in $L^{\infty}$
as $\eps\to0$.
\begin{lem}
\label{lem:Vebounded}For each $\eps\ge0$, we have $V_{\eps}\in L^{\infty}$
with probability $1$. Also, $V_{\eps}\to V_{0}$ in probability in
$L^{\infty}$.
\end{lem}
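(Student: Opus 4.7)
The plan is to exploit that the bracketed numerator in \eqref{Vepsdef} is the Taylor remainder $g(z) \coloneqq \e^{z}-1-z-\tfrac12 z^2$ of order three. This remainder satisfies $|g(z)|\le C|z|^{3}\e^{|z|}$, so it vanishes cubically at $z=0$ and is controlled by an exponential of $|z|$ at infinity. I will combine this with the Hölder regularity of $S_\eps$ (\lemref{Sepsregularity}) to beat the $(y-x)^{-2}$ singularity near $y=x$, and with the periodicity (hence boundedness) of $S_\eps$ — which follows because $\xi_\eps$ is $L$-periodic and $G$ is compactly supported — to ensure integrability at infinity.

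For the pointwise bound on $V_\eps(x)$, I would split the integral into the near region $\{|y-x|\le1\}$ and the far region $\{|y-x|>1\}$. On the near region, \lemref{Sepsregularity} gives $|S_\eps(y,x)|\le \|S_\eps\|_{\mathcal{C}^{1/2-\kappa}}|y-x|^{1/2-\kappa}$, so the numerator is bounded by $C\|S_\eps\|_{\mathcal{C}^{1/2-\kappa}}^{3}\e^{\|S_\eps\|_{L^\infty}}|y-x|^{3/2-3\kappa}$, and after dividing by $(y-x)^2$ we are left with $|y-x|^{-1/2-3\kappa}$, which is integrable for $\kappa<1/6$. On the far region, the numerator is bounded by a constant depending only on $\|S_\eps\|_{L^\infty}$ (finite by periodicity) and $(y-x)^{-2}$ is integrable. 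Summing yields a bound on $\|V_\eps\|_{L^\infty}$ in terms of $\|S_\eps\|_{\mathcal{C}^{1/2-\kappa}}$; since the latter is almost surely finite, $V_\eps\in L^\infty$ a.s.

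For the convergence $V_\eps\to V_0$ in $L^\infty$, I would write
\[
V_\eps(x)-V_0(x)=\frac{1}{\pi}\int\frac{g(S_\eps(y,x))-g(S_0(y,x))}{(y-x)^2}\,\dif y
\]
and use the mean value theorem together with $|g'(\zeta)|=|\e^\zeta-1-\zeta|\le C|\zeta|^{2}\e^{|\zeta|}$ to obtain $|g(a)-g(b)|\le C|a-b|(|a|\vee|b|)^{2}\e^{|a|\vee|b|}$. Applied with $a=S_\eps(y,x)$, $b=S_0(y,x)$, and combined with $|S_\eps(y,x)-S_0(y,x)|\le\|S_\eps-S_0\|_{\mathcal{C}^{1/2-\kappa}}|y-x|^{1/2-\kappa}$ on the near region and a uniform $L^\infty$ bound on the far region, this produces an estimate
\[
\|V_\eps-V_0\|_{L^\infty}\le C\bigl(\|S_\eps\|_{\mathcal{C}^{1/2-\kappa}}\vee\|S_0\|_{\mathcal{C}^{1/2-\kappa}}\bigr)\,\|S_\eps-S_0\|_{\mathcal{C}^{1/2-\kappa}},
\]
and the right side tends to $0$ in probability by \lemref{Sepsregularity}.

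I do not anticipate a serious obstacle: no renormalization enters $V_\eps$, so this is essentially a deterministic estimate once the regularity of $S_\eps$ is known. The only minor subtleties are (i) organizing the near/far split so that the $(y-x)^{-2}$ factor is neutralized by the cubic vanishing of $g$, requiring $\kappa<1/6$, which is harmless since $\kappa$ may be taken arbitrarily small, and (ii) using periodicity to prevent the far-field integrand from blowing up, which is immediate from the compact support of $G$.
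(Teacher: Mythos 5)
Your argument is essentially identical to the paper's: the same Taylor remainder $f(z)=\e^{z}-1-z-\tfrac12 z^{2}$, the same cubic-times-exponential pointwise bound combined with the $\mathcal{C}^{\frac{1}{2}-\kappa}$ regularity of $S_{\eps}$ from \lemref{Sepsregularity}, and the same mean-value-theorem estimate on $|f(a)-f(b)|$ for the convergence in $L^{\infty}$ (the paper merely folds your near/far split into the single bound $|f(z)|\le C(|z|^{3}\wedge1)\e^{|z|}$). One small correction: your final displayed bound on $\|V_{\eps}-V_{0}\|_{L^{\infty}}$ should carry the exponential factor $\exp\{2(\|S_{\eps}\|_{\mathcal{C}^{1/2-\kappa}}\vee\|S_{0}\|_{\mathcal{C}^{1/2-\kappa}})\}$ produced by the mean value theorem, though this is harmless for the conclusion since that factor is bounded in probability by \lemref{Sepsregularity}.
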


\begin{proof}
Let $f(x)=\e^{x}-(1+x+x^{2}/2)$. By Taylor's theorem, we have a $C<\infty$
so that
\begin{align}
|f(x)| & \le C(|x|^{3}\wedge1)\e^{|x|}; & |f'(x)| & \le C(|x|^{2}\wedge1)\e^{|x|}.\label{eq:fbd}
\end{align}
Thus, for all $\eps\ge0$ and all $x,y\in\mathbf{R}$,
\[
|\e^{S_{\eps}(y,x)}-(1+S_{\eps}(y,x)+\frac{1}{2}S_{\eps}(y,x)^{2})|\le C\exp\{2\|S_{\eps}\|_{\mathcal{C}^{\frac{1}{2}-\kappa}}\}(|y-x|^{\frac{3}{2}-3\kappa}\wedge1).
\]
But this means that
\[
|V_{\eps}(x)|\le C\exp\{2\|S_{\eps}\|_{\mathcal{C}^{\frac{1}{2}-\kappa}}\}\int\frac{|y-x|^{\frac{3}{2}-3\kappa}\wedge1}{(y-x)^{2}}\,\dif y,
\]
and the last integral is finite and independent of $x$ if $\kappa<\frac{1}{2}$.
This implies that $V_{\eps}\in L^{\infty}$.

Also, by the mean value theorem and \eqref{fbd}, we have for all
$x,y\in\mathbf{R}$ that
\begin{align*}
 & \left|\e^{S_{\eps}(y,x)}-(1+S_{\eps}(y,x)+\tfrac{1}{2}S_{\eps}(y,x)^{2})-[\e^{S(y,x)}-(1+S(y,x)+\tfrac{1}{2}S(y,x)^{2})]\right|\\
 & \qquad\le C|S_{\eps}(y,x)-S(y,x)|((|S_{\eps}(y,x)|\vee|S(y,x)|)^{2}\wedge1)\exp\{|S_{\eps}(y,x)|\vee|S(y,x)|\}\\
 & \qquad\le C\|S_{\eps}-S\|_{\mathcal{C}^{\frac{1}{2}-\kappa}}(|y-x|^{\frac{3}{2}-3\kappa}\wedge1)\exp\{2(\|S_{\eps}\|_{\mathcal{C}^{\frac{1}{2}-\kappa}}\vee\|S\|_{\mathcal{C}^{\frac{1}{2}-\kappa}})\}.
\end{align*}
Therefore, for all $x\in\mathbf{R}$, we have
\[
|V_{\eps}(x)-V(x)|\le C\|S_{\eps}-S\|_{\mathcal{C}^{\frac{1}{2}-\kappa}}\exp\{2(\|S_{\eps}\|_{\mathcal{C}^{\frac{1}{2}-\kappa}}\vee\|S\|_{\mathcal{C}^{\frac{1}{2}-\kappa}})\}\int\frac{|y-x|^{\frac{3}{2}-3\kappa}\wedge1}{(y-x)^{2}}\,\dif y.
\]
The integral is bounded independently of $x$, and by \lemref{Sepsregularity}
we have
\[
\|S_{\eps}-S\|_{\mathcal{C}^{\frac{1}{2}-\kappa}}\exp\{2(\|S_{\eps}\|_{\mathcal{C}^{\frac{1}{2}-\kappa}}\vee\|S\|_{\mathcal{C}^{\frac{1}{2}-\kappa}})\}\to0
\]
in probability as $\eps\to0$. This proves that $V_{\eps}\to V$ in
probability in $L^{\infty}$ as $\eps\to0$.
\end{proof}

\subsubsection{Stability of \texorpdfstring{$U_\eps$}{U\_ε}}

Now we show the stability of $U_{\eps}$. Since $U_{\eps}(\eta)$
is defined as an integral over squares of Gaussian random variables---elements
of the second Wiener chaos---we can use moment estimates to control
its regularity and establish its stability.
\begin{lem}
\label{lem:Uemomentbound}For each $\kappa>0$, we have a constant
$C$ so that for all $\eps\in[0,1]$ and $\eta\in\mathcal{C}^{1}([-1,1])$
with $\|\eta\|_{\mathcal{C}^{0}}\le1$ we have, defining $\eta_{x}^{\lambda}$
as in \eqref{etaable}, that $\mathbf{E}U_{\eps}(\eta_{x}^{\lambda})^{2}\le C\lambda^{-\kappa}$.
\end{lem}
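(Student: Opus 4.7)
The plan is to exploit that $U_\eps(\eta_x^\lambda)$ sits in the second Wiener chaos of $\xi$. Since $S_\eps$ is a centered Gaussian process, Isserlis's formula for jointly Gaussian random variables gives
\[
\mathbf{E}\bigl[(S_\eps(y_1,x_1)^2 - \mathbf{E} S_\eps(y_1,x_1)^2)(S_\eps(y_2,x_2)^2 - \mathbf{E} S_\eps(y_2,x_2)^2)\bigr] = 2\bigl(\mathbf{E}[S_\eps(y_1,x_1) S_\eps(y_2,x_2)]\bigr)^2.
\]
Substituting into \eqref{Uepsdef} expresses $\mathbf{E} U_\eps(\eta_x^\lambda)^2$ as a quadruple integral of this squared cross-covariance, weighted by $\eta_x^\lambda(x_1)\eta_x^\lambda(x_2)/[(y_1-x_1)^2(y_2-x_2)^2]$.

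Setting $K_\eps(u) = \mathbf{E}[S_\eps(u)S_\eps(0)]$ (a function of one variable by stationarity of $\xi$), a direct expansion yields $\mathbf{E}[S_\eps(y_1,x_1) S_\eps(y_2,x_2)] = \square K_\eps(\alpha;-p_1,p_2)$ with $\alpha = x_1 - x_2$ and $p_i = y_i - x_i$. After the change of variables $y_i \mapsto p_i$, the inner integration in $(p_1,p_2)$ is exactly the quantity estimated by \lemref{Hbound}, which I would apply with $H_1 = H_2$ a compactly supported truncation $\widetilde K_\eps$ of $K_\eps - K_\eps(0)$ near the origin (subtracting a constant is harmless because $\square$ annihilates constants, and the far-field contribution is controlled directly using the decay of $1/p_i^2$). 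This produces
\[
\int_{\mathbf{R}^2} \frac{(\square K_\eps(\alpha; -p_1, p_2))^2}{p_1^2 p_2^2}\,\dif p_1\,\dif p_2 \le C|\alpha|^{-2\theta}
\]
uniformly in $\eps$. Integrating $|\alpha|^{-2\theta}$ against $\eta_x^\lambda(x_1)\eta_x^\lambda(x_2)$ and rescaling to unit scale yields a factor of $\lambda^{-2\theta}$; choosing any $\theta \in (0,\kappa/2]\cap(0,1/2)$ gives the claimed bound.

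The main obstacle is the uniform-in-$\eps$ singular-norm estimate $\|\widetilde K_\eps\|_{1-\theta;2} \le C$ for $\eps \in [0,1]$ and $\theta$ small. Since $S_\eps = -G*\xi_\eps$ with $\xi$ periodic white noise, one has $K_\eps(u) = \sum_k (G_\eps * \widetilde G_\eps)(u + kL)$, where $G_\eps = G * \rho_\eps$ and $\widetilde G_\eps(w) = G_\eps(-w)$; only finitely many shifts contribute at each $u$ because $G_\eps$ is compactly supported. The logarithmic behavior $G(u) \sim (1/\pi)\log|u|$ makes $K_\eps(u) - K_\eps(0) \sim c|u|$ near the origin in the limit $\eps \to 0$, and one must verify the three pointwise estimates $|K_\eps(u)-K_\eps(0)| \le C|u|^{1-\theta}$, $|K_\eps'(u)| \le C|u|^{-\theta}$, $|K_\eps''(u)| \le C|u|^{-1-\theta}$ for $|u| \le 1$ uniformly in $\eps$. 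Two observations make the transition regime $|u| \sim \eps$ tractable: first, $K_\eps$ is even, so $K_\eps'(0) = 0$, which reduces the $k=0$ estimate to a Taylor bound in terms of $K_\eps''$; second, integration by parts in the convolution $G_\eps * \widetilde{G_\eps''}$ defining $K_\eps''$ transfers one derivative off the nearly singular factor, giving $|K_\eps''(u)| \le C/(\eps\vee|u|)$, which is compatible with the target whenever $\theta > 0$. The periodic far-field corrections ($k\ne 0$) are smooth in a neighborhood of the origin and contribute only to the regular part of $K_\eps$.
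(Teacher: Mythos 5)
Your argument is essentially identical to the paper's: both use Isserlis to reduce $\mathbf{E}U_\eps(\eta_x^\lambda)^2$ to a quadruple integral of $(\square H_\eps)^2$ (your $K_\eps$ is the paper's $H_\eps$), both invoke \lemref{Hbound}, and both conclude by rescaling $|x_1-x_2|^{-2\theta}$ against $\eta_x^\lambda\otimes\eta_x^\lambda$, with the remaining ingredient being a uniform-in-$\eps$ bound on the singular norm $\|\cdot\|_{1-\theta;2}$ of $H_\eps$. If anything your handling of the periodic sum is a touch more careful than the paper's: $H_\eps$ is $L$-periodic, so the norm $\|H_\eps-H_\eps(0)\|_{1-\kappa/2;m}$ taken literally over all of $\mathbf{R}$ is actually infinite (already $|x|^{\kappa/2}|H_\eps'(x)|$ is unbounded as $|x|\to\infty$ because $H_\eps'$ does not decay), and the compactly-supported truncation you propose, together with a direct bound on the far-field contribution where the weight $1/(p_1^2p_2^2)$ dominates, is exactly the patch needed to make the cited estimate literal.
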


\begin{proof}
Taking the second moment of \eqref{Uepsdef}, we have
\begin{equation}
\mathbf{E}U_{\eps}(\eta)^{2}=\frac{1}{\pi^{2}}\iiiint\frac{\eta(x)\eta(w)}{(x-y)^{2}(w-z)^{2}}\Cov\left(S_{\eps}(y,x)^{2},S_{\eps}(z,w)^{2}\right)\,\dif y\,\dif z\,\dif x\,\dif w.\label{eq:secondmomentofUeps}
\end{equation}
We can compute
\begin{align}
\mathbf{E}S_{\eps}(x,y)S_{\eps}(w,z) & =\sum_{k\in\mathbf{Z}}\int\left(G_{\eps}(x-r+kL)-G_{\eps}(y-r+kL)\right)\left(G_{\eps}(w-r)-G_{\eps}(z-r)\right)\,\dif r\nonumber \\
 & =\square H_{\eps}(x-w;x-y,z-w),\label{eq:exp-Seps-product}
\end{align}
where $H_{\eps}(q)=\sum\limits _{k\in\mathbf{Z}}G_{\eps}^{*2}(q+kL)$
and $\square$ is defined as in \eqref{square}. By the Isserlis theorem,
\[
\Cov\left(S_{\eps}(y,x)^{2},S_{\eps}(z,w)^{2}\right)=2(\mathbf{E}S_{\eps}(y,x)S_{\eps}(z,w))^{2}=2\left(\square H_{\eps}(x-w;x-y,z-w)\right)^{2}.
\]
Using \eqref{Ganynegative} and \cite[(10.12)]{Hairer2014} and recalling
that $G$ has compact support, we see that $\|H_{\eps}-H_{\eps}(0)\|_{1-\frac{\kappa}{2};m}\le C$,
where $C<\infty$ is a constant independent of $\eps$. But the $\square$
operator does not see constants, so applying \lemref{Hbound}, we
have
\begin{align*}
\mathbf{E}U_{\eps}(\eta)^{2} & \le\frac{2}{\pi^{2}}\iiiint\eta(x)\eta(w)\left(\frac{\square H_{\eps}(x-w;x-y,z-w)}{(x-y)(w-z)}\right)^{2}\,\dif y\,\dif z\,\dif x\,\dif w\\
 & \le C\iint\eta(x)\eta(w)|x-w|^{-\kappa}\,\dif x\,\dif w.
\end{align*}
Then the conclusion follows by rescaling.
\end{proof}
\begin{cor}
\label{cor:Uebounded}For any $\kappa>0$ and $\eps\ge0$, we have
$U_{\eps}\in\mathcal{C}^{-\kappa}$ almost surely.
\end{cor}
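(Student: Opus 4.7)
The plan is to combine the second-moment estimate of \lemref{Uemomentbound} with Gaussian hypercontractivity and the wavelet characterization \propref{wavelets} in a standard Kolmogorov-type argument, adapted to the periodic setting.

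First, I would observe that for each fixed test function $\eta$, the random variable $U_\eps(\eta)$ defined by \eqref{Uepsdef} is an integral of centered quadratic polynomials in the Gaussian field $S_\eps$, so it belongs to the second homogeneous Wiener chaos generated by $\xi$. Hypercontractivity (Nelson's inequality) then yields $\mathbf{E}|U_\eps(\eta)|^p \le C_p\bigl(\mathbf{E}|U_\eps(\eta)|^2\bigr)^{p/2}$ for every $p \ge 2$. Combining this with \lemref{Uemomentbound} applied with some $\kappa' \in (0,\kappa)$ gives a constant $C_p < \infty$ such that
\[
\mathbf{E}\bigl|U_\eps(\eta_x^\lambda)\bigr|^p \le C_p \lambda^{-p\kappa'/2}
\]
uniformly in $\eps \in [0,1]$, $x \in \mathbf{R}$, and $\lambda \in (0,1]$, and for every admissible test function $\eta$.

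Next, I would exploit periodicity: since $\xi$, and hence $\xi_\eps$ and $S_\eps$, are $L$-periodic, so is $U_\eps$. Consequently, at each dyadic scale $2^{-n}$, the wavelet coefficient $U_\eps(\psi_{2^{-n}x}^{2^{-n}})$ depends on $x \in \mathbf{Z}$ only through $x$ modulo $2^n L$, so there are at most $O(2^n L)$ distinct wavelet coefficients at scale $n$ (and $O(L)$ coefficients against $\phi$ at scale $0$). Markov's inequality combined with the moment bound above gives
\[
\mathbf{P}\bigl(2^{-\kappa n}|U_\eps(\psi_{2^{-n}x}^{2^{-n}})| > 1\bigr) \le C_p\, 2^{-np(\kappa - \kappa'/2)},
\]
and a union bound over the $O(2^n L)$ relevant translates, followed by summation over $n \ge 0$, yields a convergent series provided $p(\kappa - \kappa'/2) > 1$, which is arranged by taking $\kappa'$ small and $p$ large. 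The Borel--Cantelli lemma then implies that the right-hand side of the bound in \propref{wavelets} with $\alpha = -\kappa$ is almost surely finite, giving $U_\eps \in \mathcal{C}^{-\kappa}$ almost surely.

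The main technical point, though not a serious obstacle, is the invocation of hypercontractivity: without upgrading the single $L^2$ bound to arbitrarily high moments, the factor $2^n$ from the union bound over translates could not be absorbed. I would write the argument uniformly in $\eps \in [0,1]$, which will be convenient when proving the convergence $U_\eps \to U_0$ in the next step via the analogous moment bound on the difference $U_\eps - U_0$.
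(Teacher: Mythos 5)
Your proof is correct and follows essentially the same route as the paper: second-chaos hypercontractivity upgrades the $L^2$ bound of \lemref{Uemomentbound} to arbitrary $L^p$, periodicity reduces the wavelet coefficients at dyadic scale $2^{-n}$ to $O(2^nL)$ translates, a large choice of $p$ makes the resulting sum over scales converge, and \propref{wavelets} then gives $U_\eps\in\mathcal{C}^{-\kappa}$. The only cosmetic difference is that you conclude almost-sure finiteness via Markov plus Borel--Cantelli, whereas the paper directly bounds $\mathbf{E}\sup_{n,x}2^{-\kappa np}\bigl|U_\eps(\psi^{2^{-n}}_{2^{-n}x})\bigr|^p$ (thereby obtaining the slightly stronger conclusion that $\mathbf{E}\|U_\eps\|_{\mathcal{C}^{-\kappa}}^p<\infty$, which the paper reuses when proving the convergence $U_\eps\to U_0$).
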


\begin{proof}
We note that $U_{\eps}$ is an element of the second Wiener chaos
by definition. By \lemref{Uemomentbound} and the equivalence of moments
of elements of finite Wiener chaoses (as stated for example in \cite[Lemma 10.5]{Hairer2014})
and \lemref{Uemomentbound}, for each $\kappa>0$ and $p\in[1,\infty)$
there is a constant $C=C(p,\kappa)<\infty$, depending only on $p$
and $\kappa$, so that
\begin{align*}
\mathbf{E}\sup_{\substack{x\in\mathbf{Z}\\
n\in\mathbf{N}
}
}2^{-\kappa np}|U_{\eps}(\psi_{2^{-n}x}^{2^{-n}})|^{p} & \le\sum_{n\in\mathbf{N}}\sum_{x=0}^{2^{n}L}2^{-\kappa np}\mathbf{E}|U_{\eps}(\psi_{2^{-n}x}^{2^{-n}})|^{p}\le CL\sum_{n\in\mathbf{N}}2^{n[1+\kappa-\kappa p/2]}.
\end{align*}
Choose $p>2(1/\kappa+1)$, so the last sum is finite. It is simpler
to show that $\mathbf{E}\sup\limits _{x\in\mathbf{Z}}|U_{\eps}(\phi_{x}^{1})|^{p}<\infty$.
By \propref{wavelets}, this means that $\mathbf{E}\|U_{\eps}\|_{\mathcal{C}^{-\kappa}}^{p}<\infty$,
so $\|U_{\eps}\|_{\mathcal{C}^{-\kappa}}<\infty$ almost surely.
\end{proof}
\begin{lem}
\label{lem:UepsminusUbound}For any $\kappa\in(0,1)$ and $R>0$,
we have a constant $C<\infty$ so that, for any $\eta\in\mathcal{C}^{1}([-R,R])$
with $\|\eta\|_{\mathcal{C}^{0}}\le1$ and any $x_{0}\in\mathbf{R}$,
we have
\begin{equation}
\mathbf{E}[U_{\eps}(\eta_{x_{0}}^{\lambda})-U(\eta_{x_{0}}^{\lambda})]^{2}\le C\eps^{\kappa/6}\lambda^{-\kappa}.\label{eq:Uepsconv}
\end{equation}
\end{lem}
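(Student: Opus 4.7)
The plan is to adapt the proof of \lemref{Uemomentbound} and quantify every step in $\eps$. Since $U_{\eps}(\eta)$ and $U(\eta)$ both lie in the second Wiener chaos generated by $\xi$, so does their difference, and its second moment can be computed directly via the Isserlis theorem. Analogously to \eqref{secondmomentofUeps}, I would write
\[
\mathbf{E}[U_{\eps}(\eta) - U(\eta)]^{2} = C\iiiint \frac{\eta(x)\eta(w)}{(x-y)^{2}(w-z)^{2}}\,\Delta(x,y,w,z)\,\dif y\,\dif z\,\dif x\,\dif w
\]
with Gaussian covariance integrand
\[
\Delta = (\mathbf{E}S_{\eps}(y,x)S_{\eps}(z,w))^{2} - (\mathbf{E}S_{\eps}(y,x)S(z,w))^{2} - (\mathbf{E}S(y,x)S_{\eps}(z,w))^{2} + (\mathbf{E}S(y,x)S(z,w))^{2}.
\]
Generalizing \eqref{exp-Seps-product} to mixed pairs, $\mathbf{E}S_{\eps_{1}}(y,x)S_{\eps_{2}}(z,w) = \square H_{\eps_{1},\eps_{2}}(x-w;x-y,z-w)$, where $H_{\eps_{1},\eps_{2}}(q) = \sum_{k}(G_{\eps_{1}}*G_{\eps_{2}})(q+kL)$, $G_{\eps} = G*\rho_{\eps}$, and $G_{0} = G$.

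Next, I would factor $\Delta$ as a sum of differences of squares: writing $A = \square H_{\eps,\eps}$, $B = \square H_{0,0}$, $C = \square H_{\eps,0}$, $D = \square H_{0,\eps}$ (at the same arguments), one has $\Delta = A^{2} - C^{2} - D^{2} + B^{2} = (A-C)(A+C) + (B-D)(B+D)$, so each product contains exactly one factor of the form $\square(H_{\eps_{1},\eps_{2}} - H_{\eps_{1}',\eps_{2}'})$ in which the pairs of subscripts differ in a single component. Applying Cauchy--Schwarz and then \lemref{Hbound} to each of the resulting quadruple integrals reduces the problem to estimating $\|H_{\eps_{1},\eps_{2}} - H_{\eps_{1}',\eps_{2}'} - \text{const}\|_{1-\theta;2}$ by a small positive power of $\eps$ (recall that $\square$ annihilates constants), after which the rescaling of $\eta_{x_{0}}^{\lambda}$ performed at the end of the proof of \lemref{Uemomentbound} produces the factor $\lambda^{-\kappa}$.

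The main obstacle is precisely this quantitative analytic estimate. The strategy is to exploit $G(x) = (1/\pi)\log|x|$ for $|x|\le 1/2$: $G_{\eps} - G$ is essentially a bounded smoothing of a logarithmic singularity at scale $\eps$, so for $|x| > 2\eps$ one obtains $|G_{\eps}(x) - G(x)| \lesssim \eps/|x|$ together with analogous estimates on the first two derivatives, while for $|x| < 2\eps$ the pointwise difference is only $O(\log(\eps^{-1}))$. Convolving produces the corresponding pointwise bounds on $H_{\eps_{1},\eps_{2}} - H$. The logarithmic region near the origin prevents an immediate bound in the sharp norm $\|\cdot\|_{1-\kappa/2;2}$, so I would interpolate between this crude estimate (valid in a slightly weaker norm) and the uniform bound $\|H_{\eps_{1},\eps_{2}}\|_{1-\theta;2} \le C$ available for any fixed $\theta > 0$ as in the proof of \lemref{Uemomentbound}. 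This interpolation yields a positive power of $\eps$; the specific exponent $\kappa/6$ in the statement is an artifact of the interpolation parameters, and any small positive power would suffice for the subsequent applications.
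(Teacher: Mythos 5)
Your argument is correct and follows the same overall strategy as the paper (second moment via Isserlis, reduction to $\square$-kernel estimates through \lemref{Hbound}, then rescaling in $\lambda$), but your algebraic decomposition of the covariance is slightly different. The paper factors the integrand as $S_\eps^2 - S^2 = (S_\eps + S)(S_\eps - S)$ \emph{before} applying Isserlis to the Gaussian pair $(S_\eps + S,\, S_\eps - S)$, producing two products in each of which one factor is a covariance of $S_\eps - S$ and hence carries $G_\eps - G$. You instead expand $\Cov(S_\eps^2 - S^2,\, S_\eps^2 - S^2)$ directly into the four squared pairwise covariances $A^2 - C^2 - D^2 + B^2$ and regroup as $(A-C)(A+C) + (B-D)(B+D)$; a short check shows the two expansions agree (and in fact $C = D$, since $G_\eps * G = G * G_\eps$ with $G$ even, so your identity collapses further). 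Either grouping isolates $G_\eps - G$ in exactly one factor of each product, so both reduce to bounding $\|\mathcal{P}[G_{\eps_1}*(G_\eps - G)] - \text{const}\|_{1-\theta;2}$ by a positive power of $\eps$. For that estimate the paper cites \cite[Lemma 10.17]{Hairer2014}, which packages precisely the pointwise-plus-interpolation argument you sketch, so your hand derivation is the right idea; your observation that $\kappa/6$ is an artifact of exponent bookkeeping is also correct. One small inefficiency: the Cauchy--Schwarz step you mention is unnecessary, since \lemref{Hbound} already handles a product of two distinct $\square$-kernels directly.
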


\begin{proof}
For any $\eta\in\mathcal{C}_{\mathrm{c}}^{1}$, we have
\[
U_{\eps}(\eta)-U(\eta)=\frac{1}{2\pi}\iint\eta(x)\frac{S_{\eps}(y,x)^{2}-S(y,x)^{2}-\mathbf{E}\left(S_{\eps}(y,x)^{2}-S(y,x)^{2}\right)}{(y-x)^{2}}\,\dif y\,\dif x,
\]
so
\begin{equation}
\begin{aligned}\mathbf{E} & \left(U_{\eps}(\eta)-U(\eta)\right)^{2}=\frac{1}{4\pi^{2}}\iiiint\frac{\Cov\left(S_{\eps}(y,x)^{2}-S(y,x)^{2},S_{\eps}(z,w)^{2}-S(z,w)^{2}\right)}{(x-y)^{2}(w-z)^{2}}\\
 & \qquad\qquad\qquad\qquad\qquad\qquad\qquad\qquad\qquad\qquad\qquad\qquad\qquad\qquad\eta(x)\eta(w)\,\dif y\,\dif z\,\dif x\,\dif w.
\end{aligned}
\label{eq:vardif}
\end{equation}
By the Isserlis theorem we have that 
\begin{align*}
\Cov & \left(S_{\eps}(y,x)^{2}-S(y,x)^{2},S_{\eps}(z,w)^{2}-S(z,w)^{2}\right)\\
 & =\mathbf{E}\left[(S_{\eps}+S)(y,x)(S_{\eps}-S)(z,w)\right]\mathbf{E}\left[(S_{\eps}-S)(y,x)(S_{\eps}+S)(z,w)\right]\\
 & \qquad+\mathbf{E}\left[(S_{\eps}+S)(y,x)(S_{\eps}+S)(z,w)\right]\mathbf{E}\left[(S_{\eps}-S)(y,x)(S_{\eps}-S)(z,w)\right]\\
 & =\left(\left(\square\left[\mathcal{P}[(G_{\eps}+G)*(G_{\eps}-G)]\right]\right)^{2}+\square\left[\mathcal{P}[(G_{\eps}+G)^{*2}\right]\square\left[\mathcal{P}[(G_{\eps}-G)^{*2}]\right]\right)\\
 & \qquad\qquad\qquad\qquad\qquad\qquad\qquad\qquad\qquad\qquad\qquad\qquad\qquad(x-w;x-y,z-w),
\end{align*}
where $\mathcal{P}f(x)=\sum\limits _{k\in\mathbf{Z}}f(x+kL)$. Combining
this with \eqref{vardif} yields
\begin{align}
\mathbf{E} & [U(\eta)-U_{\eps}(\eta)]^{2}\le C\iiiint\big[\left(\square\left[\mathcal{P}[(G_{\eps}+G)*(G_{\eps}-G)\right](x-w;-y,z)\right)^{2}\nonumber \\
 & \qquad\qquad\qquad\qquad\qquad\qquad\qquad+\left(\square\left[\mathcal{P}[(G_{\eps}+G)^{*2}]\right]\cdot\square\left[\mathcal{P}[(G_{\eps}-G)^{*2}]\right]\right)(x-w;-y,z)\big]\nonumber \\
 & \qquad\qquad\qquad\qquad\qquad\qquad\qquad\qquad\qquad y^{-2}z^{-2}\eta(x)\eta(w)\,\dif y\,\dif z\,\dif x\,\dif w.\label{eq:penultimateUUeps}
\end{align}
By \cite[Lemma 10.17]{Hairer2014} and \eqref{Ganynegative}, for
any $m\in\mathbf{Z}_{\ge0}$ we have a constant $C$, independent
of $\eps$, so that $\|G_{\eps}\|_{-2\kappa;m}\le C$ and $\|G_{\eps}-G\|_{-2\kappa;m}\le C\eps^{\kappa}$.
By \cite[(10.12)]{Hairer2014}, we thus obtain
\begin{align*}
\|\mathcal{P}[(G_{\eps}-G)*(G_{\eps}+G)]-\mathcal{P}[(G_{\eps}-G)*(G_{\eps}+G)](0)\|_{1-3\kappa;m} & \le C\varepsilon^{\kappa};\\
\|\mathcal{P}[(G_{\eps}-G)^{*2}]-(G_{\eps}-G)^{*2}(0)\|_{1-3\kappa;m} & \le C\varepsilon^{2\kappa};\\
\|\mathcal{P}[(G_{\eps}+G)^{*2}]-(G_{\eps}+G)^{*2}(0)\|_{1-3\kappa;m} & \le C.
\end{align*}
The operator $\square$ does not see constants, so by these inequalities,
\eqref{penultimateUUeps}, and \lemref{Hbound},
\begin{align}
\mathbf{E}[U(\eta)-U_{\varepsilon}(\eta)]^{2} & \le C\varepsilon^{\kappa}\int\eta(x)\eta(w)|x-w|^{-6\kappa}\,\dif x\,\dif w.\label{eq:finalUUepsbound}
\end{align}
Then \eqref{Uepsconv} follows by substituting $\kappa\mapsto\kappa/6$
and rescaling. The assumption $\kappa<1$ is so that the integral
on the right side of \eqref{finalUUepsbound} can be bounded in terms
of $\|\eta\|_{\mathcal{C}^{0}}$ and $R$.
\end{proof}
\begin{cor}
\label{cor:UeconvtoU}For any $\kappa>0$, $U_{\varepsilon}\to U$
in probability in $\mathcal{C}^{-\kappa}$.
\end{cor}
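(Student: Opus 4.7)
The plan is to follow the argument of Corollary \ref{cor:Uebounded} essentially verbatim, using Lemma \ref{lem:UepsminusUbound} in place of Lemma \ref{lem:Uemomentbound} to obtain a quantitative rate of convergence. The difference $U_\eps - U$ is, like $U_\eps$ itself, an element of the second Wiener chaos, so we again have the equivalence of moments (as in \cite[Lemma 10.5]{Hairer2014}): for each $p \in [1,\infty)$ there is a constant $C_p$ so that $\mathbf{E}|(U_\eps-U)(\eta)|^p \le C_p (\mathbf{E}|(U_\eps-U)(\eta)|^2)^{p/2}$ for any test function $\eta$.

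First, I would apply Proposition \ref{prop:wavelets} to $U_\eps - U$, which reduces bounding $\|U_\eps-U\|_{\mathcal{C}^{-\kappa}}$ to controlling the pairings with the finitely many translates of $\phi$ of scale $1$ intersecting a single period and the pairings with the wavelets $\psi_{2^{-n}x}^{2^{-n}}$ for $n\in\mathbf{N}$ and $x\in\mathbf{Z}\cap[0,2^nL]$. I would then apply Lemma \ref{lem:UepsminusUbound} with $\lambda=2^{-n}$ (using $\kappa/2$ in place of $\kappa$ in the statement of that lemma, which requires $\kappa<2$, harmless since we may assume $\kappa$ is small), giving
\begin{equation*}
\mathbf{E}\bigl|(U_\eps-U)(\psi_{2^{-n}x}^{2^{-n}})\bigr|^p \le C_p \eps^{p\kappa/24} 2^{np\kappa/4}.
\end{equation*}

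Next, by the union bound (i.e.\ summing over $x$ and $n$),
\begin{equation*}
\mathbf{E}\sup_{\substack{x\in\mathbf{Z}\cap[0,2^nL]\\ n\in\mathbf{N}}} 2^{-\kappa np}\bigl|(U_\eps-U)(\psi_{2^{-n}x}^{2^{-n}})\bigr|^p \le C_p L \eps^{p\kappa/24}\sum_{n\in\mathbf{N}} 2^{n[1+\kappa p/4 - \kappa p]}.
\end{equation*}
Choosing $p>4(1+\kappa)/(3\kappa)$ makes the sum finite. The analogous estimate for the pairings with $\phi_x^1$ is simpler. Combining these via Proposition \ref{prop:wavelets} gives $\mathbf{E}\|U_\eps-U\|_{\mathcal{C}^{-\kappa}}^p \le C_p' \eps^{p\kappa/24}$, which by Markov's inequality yields convergence in probability (in fact, in $L^p$) in $\mathcal{C}^{-\kappa}$.

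The main (and essentially only) obstacle has already been handled in Lemma \ref{lem:UepsminusUbound}; the remaining argument is a routine Kolmogorov-type chaining in the wavelet basis with Wiener-chaos moment equivalence, exactly parallel to the proof of Corollary \ref{cor:Uebounded}. No new analytic estimate on $G_\eps - G$ is needed beyond what is already embedded in Lemma \ref{lem:UepsminusUbound}.
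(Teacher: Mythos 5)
Your proof is correct and follows essentially the same path as the paper: reduce to wavelet pairings via Proposition~\ref{prop:wavelets}, use that $U_\eps - U$ lives in the second Wiener chaos so moments are equivalent, feed in the $L^2$ estimate of Lemma~\ref{lem:UepsminusUbound}, and sum over the dyadic scales for $p$ large enough. The only (harmless) cosmetic difference is that you apply Lemma~\ref{lem:UepsminusUbound} with $\kappa/2$ in place of $\kappa$, which is not actually needed — applying it with $\kappa$ directly already gives a summable geometric series once $p > 2/\kappa$ — but this changes nothing of substance.
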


\begin{proof}
Assume without loss of generality that $\kappa<1$. As in \corref{Uebounded},
$U_{\eps}-U$ is an element of the second Wiener chaos by definition.
By \lemref{UepsminusUbound} and the equivalence of moments of the
elements of finite Wiener chaoses, for each $\kappa>0$ and $p\in[1,\infty)$
there is a constant $C=C(p,\kappa)$, depending only on $p$ and $\kappa$,
so that 
\begin{align*}
\mathbf{E}\sup_{\substack{x\in\mathbf{Z}\\
n\in\mathbf{N}
}
}\frac{|(U_{\eps}-U)(\psi_{2^{n}x}^{2^{-n}})|^{p}}{2^{\kappa np}} & \le\sum_{n\in\mathbf{N}}\sum_{x=0}^{2^{n}L}\frac{\mathbf{E}|(U_{\eps}-U)(\psi_{2^{-n}x}^{2^{-n}})|^{p}}{2^{\kappa np}}\le CL\eps^{\frac{\kappa p}{6}}\sum_{n\in\mathbf{N}}2^{n[1+\kappa-\frac{\kappa p}{2}]}.
\end{align*}
Take $p>2(1/\kappa+1)$, so the last sum is finite. A simpler computation
shows that $\mathbf{E}\sup\limits _{x\in\mathbf{Z}}|(U_{\eps}-U)(\phi_{x}^{1})|^{p}\le C\eps^{\kappa p/6}$.
By \propref{wavelets}, $\mathbf{E}\|U_{\eps}-U\|_{\mathcal{C}^{-\kappa}}^{p}\le C\eps^{\kappa p/6}$
for some constant $C$ not depending on $\eps$, which means that
$U_{\eps}\to U$ in probability.
\end{proof}
The results of the last two subsections are now enough to prove \propref{Zepsconv}.
\begin{proof}[Proof of \propref{Zepsconv}.]
Since $Z_{\eps}=U_{\eps}+V_{\eps}$ for all $\eps\ge0$, the fact
that $Z_{\eps}\in\mathcal{C}^{-\kappa}$ almost surely is an immediate
consequence of \lemref{Vebounded} and \corref{Uebounded}, and the
convergence is an immediate consequence of \lemref[s]{Vebounded}
and \ref{lem:UepsminusUbound}.
\end{proof}

\subsection{The renormalization constant\label{subsec:renormalization}}

We now estimate the size of the renormalization constant $C_{\eps}$,
proving the bound \eqref{Cepsbound}.
\begin{prop}
\label{prop:Ceps-value}There is an absolute constant $C$ so that,
for all $\eps\in(0,1]$, $|C_{\eps}-(1/\pi)\log(1/\eps)|\le C$.
\end{prop}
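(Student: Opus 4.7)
The plan is to reduce $C_\eps$ to an explicit deterministic integral involving the kernel $G$, identify the singular structure of $G\ast G$ at the origin, and then perform a scaling analysis at the scale of the mollification. First, specializing the Isserlis-type calculation already carried out in the proof of \lemref{Uemomentbound} to compute $\mathbf{E}S_\eps(y,0)^{2}$ gives $\mathbf{E}S_\eps(y,0)^{2} = \square H_\eps(0;y,-y) = 2[H_\eps(0) - H_\eps(y)]$, where $H_\eps(q) = \sum_{k\in\mathbf Z}(G_\eps\ast G_\eps)(q+kL)$, $G_\eps = G\ast\rho_\eps$, and $H_\eps$ is even. In particular, the integrals in \eqref{Cepsdef} are well-defined, and
\[
C_\eps = \frac{1}{\pi}\int_{\mathbf R}\frac{H_\eps(0) - H_\eps(y)}{y^{2}}\,\dif y.
\]

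The crucial structural input is a singular decomposition of $G\ast G$. Since $\Lambda G = \delta + F$ by \lemref{Gdef} and $\Lambda^{2} = -\Delta$, one has $-\Delta(G\ast G) = (\Lambda G)\ast(\Lambda G) = \delta + 2F + F\ast F$. Inverting $-\Delta$, whose fundamental solution on $\mathbf R$ is $-|x|/2$, gives $(G\ast G)(x) = -|x|/2 + \phi(x)$ for a smooth even function $\phi$ (any linear growth at infinity is irrelevant, since $G_\eps\ast G_\eps$ is compactly supported). Setting $\sigma_\eps := \rho_\eps\ast\rho_\eps$, a mollifier at scale $\eps$, the kernel $K_\eps := G_\eps\ast G_\eps = \sigma_\eps\ast(G\ast G)$ inherits this structure: the singular part is $\sigma_\eps\ast(-|\cdot|/2)$, while the smooth part $\sigma_\eps\ast\phi$ is bounded uniformly in $\eps$ in $\mathcal{C}^{2}$ on bounded sets.

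The integral $\int_{\mathbf R}(H_\eps(0)-H_\eps(y))/y^{2}\,\dif y$ is then estimated by splitting into three regions. For $|y|>1$, the periodic function $H_\eps$ is uniformly bounded in $\eps$ (since $G\ast G$ is bounded and $G_\eps\ast G_\eps$ has support in a fixed compact set), so this range contributes $O(1)$. For $|y|\le 1$, write $H_\eps(0)-H_\eps(y) = [K_\eps(0)-K_\eps(y)] + R_\eps(y)$, where $R_\eps$ collects the $k\neq 0$ periodization terms; by evenness of $K_\eps$ the $O(y)$ Taylor terms of $R_\eps$ cancel between the $k$ and $-k$ contributions, leaving $R_\eps(y) = O(y^{2})$ uniformly in $\eps$ and hence a contribution of $O(1)$. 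The main term is then controlled by splitting at the mollification scale: for $|y|\le\eps$, Taylor expansion together with $K_\eps''(0) = O(\eps^{-1})$ (from $(-|x|/2)'' = -\delta$ smoothed at scale $\eps$) gives $K_\eps(0)-K_\eps(y) = O(y^{2}/\eps)$, contributing $O(1)$; for $\eps\le|y|\le 1$, $\sigma_\eps\ast(-|\cdot|/2)(y) = -|y|/2 + O(\eps)$, so $K_\eps(0)-K_\eps(y) = |y|/2 + O(\eps) + O(y^{2})$, making the integrand equal to $(2|y|)^{-1} + O(\eps/y^{2}) + O(1)$, and integration yields $\log(1/\eps) + O(1)$.

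Combining these contributions gives $C_\eps = (1/\pi)\log(1/\eps) + O(1)$, which is the bound \eqref{Cepsbound}. The main obstacle is the clean handling of the transition at $|y|\sim\eps$ between the mollification-dominated regime and the $-|y|/2$ regime; once the decomposition $G\ast G = -|x|/2 + \phi$ is available and the $O(y)$ cancellation in $R_\eps$ is justified by evenness, the rest is a routine scaling argument that pins down the coefficient $1/\pi$.
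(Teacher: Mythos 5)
Your proof is correct but takes a genuinely different route from the paper's. Both start from the same reduction $C_\eps=\frac{1}{\pi}\int_{\mathbf R}\frac{H_\eps(0)-H_\eps(y)}{y^2}\,\dif y$, and both exploit the evenness of $H_\eps$ to kill the linear term at the origin. The paper then observes (by evenness again) that this integral is exactly $-\Lambda H_\eps(0)$, applies $\Lambda$ \emph{once} to $G^{*2}$ via $\Lambda G=\delta+F$ to get $\Lambda(G^{*2})=G+F*G$, and reads off the logarithm from $\frac1\pi\log|\cdot|*\rho_\eps^{*2}(0)$ by a single change of variables. You instead apply $\Lambda$ \emph{twice} — writing $-\Delta(G*G)=(\Lambda G)^{*2}=\delta+2F+F*F$ — and invert $-\Delta$ to extract the singular decomposition $G*G=-|x|/2+\phi$ with $\phi$ smooth and even; the logarithm then emerges from the dyadic scale split of the integral at $|y|\sim\eps$ rather than from a change of variables. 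The paper's route is shorter, since the log singularity of $G$ itself is already known explicitly and no piecewise matching is required; your route is more elementary (it only uses the fundamental solution of the one-dimensional Laplacian) and has the side benefit of making the singular structure of the kernel $G*G$, which underlies all the Wiener-chaos estimates in \secref{coefficients}, explicit.

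A few small points worth noting. First, your claim that $R_\eps(y)=O(y^2)$ uniformly on $|y|\le1$ is slightly too strong when the period $L$ is small: for $k\ne0$ with $|kL|\lesssim1+\eps$, the argument $y+kL$ may hit the $O(\eps^{-1})$ spike of $K_\eps''$ near $0$, so the second-order Taylor bound there is not uniform in $\eps$. However, those problematic $y$'s are bounded away from $0$, so each such term in $R_\eps$ is still $O(1)$ on a region where $y^{-2}$ is bounded, and the contribution to $C_\eps$ remains $O(1)$; the conclusion is unaffected, but the clean $O(y^2)$ bound should be restricted to $L$ larger than the support diameter of $K_\eps$, with the small-$L$ case handled by the simpler observation just made. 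Second, the mollifier appearing in $K_\eps=\sigma_\eps*(G*G)$ is really $\sigma_\eps=\rho_\eps*\check\rho_\eps$ (with $\check\rho_\eps(x)=\rho_\eps(-x)$), which is automatically even; this is also what makes $H_\eps$ even even when $\rho$ itself is not, and is the cleaner way to justify $H_\eps'(0)=0$ — the same remark applies to the paper's own display of $H_\eps$ as a periodization of $G_\eps^{*2}$.
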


\begin{proof}
By \eqref{exp-Seps-product}, we have (with $H_{\eps}$ as defined
there)
\begin{equation}
\mathbf{E}S_{\eps}(y,0)^{2}=\square H_{\eps}(0;-y,y)=2(H_{\eps}(0)-H_{\eps}(y)).\label{eq:ESepsy0}
\end{equation}
We note that $H_{\eps}$ is even so $H_{\eps}'(0)=0$. Thus, combining
\eqref{FL-equivalent-def}, \eqref{Cepsdef}, and \eqref{ESepsy0}
we obtain
\begin{equation}
C_{\eps}=-\Lambda H_{\eps}(0)=C-\Lambda(G^{*2}*\rho_{\eps}^{*2})(0)=C-\frac{1}{\pi}\log|\cdot|*\rho_{\eps}^{*2}(0)-F*G*\rho_{\eps}^{*2}(0).\label{eq:Cepsexpansion}
\end{equation}
Here, $C$ is independent of $\eps$, coming from the terms $k\ne0$
in the sum defining $H_{\eps}$. The third term in \eqref{Cepsexpansion}
is also bounded independently of $\eps$, while the second is
\[
-\frac{1}{\pi}\log|\cdot|*\rho_{\eps}^{*2}(0)=-\frac{1}{\pi}\int\log|x|\rho^{*2}(x/\eps)\,\dif x=-\frac{1}{\pi}\int\log|\eps x|\rho^{*2}(-x)\,\dif x=\frac{1}{\pi}\log\frac{1}{\eps}+C.\qedhere
\]
\end{proof}

\subsection{Stability of \texorpdfstring{$\Xi_\eps$}{Ξ\_ε}\label{subsec:Xieps}}

In this section, we show that $\Xi_{\eps}$ (defined in \eqref{Xidef})
is stable as $\eps\to0$.
\begin{lem}
\label{lem:Xicts}If $\kappa>0$ and $\eps\ge0$, the map $\Xi_{\eps}$
is almost surely a bounded linear map $\mathcal{C}^{\frac{1}{2}+\kappa}\to L^{\infty}$.
\end{lem}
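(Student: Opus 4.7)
The plan is to estimate $\Xi_\eps w(x)$ pointwise in $x$, uniformly in $x$, by splitting the integration into a near region $|y-x|\le 1$ and a far region $|y-x|>1$. Fix any $\kappa'\in(0,\kappa)$; by \lemref{Sepsregularity} we have $S_\eps\in\mathcal{C}^{\frac{1}{2}-\kappa'}$ almost surely, so in what follows all estimates will hold up to a random constant $C(\omega)<\infty$ a.s.\ depending on $\|S_\eps\|_{\mathcal{C}^{\frac{1}{2}-\kappa'}}$ (and in particular on $\|S_\eps\|_{L^\infty}$, which is bounded by $\|S_\eps\|_{\mathcal{C}^{\frac{1}{2}-\kappa'}}$ since $S_\eps$ is a periodic function).

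For the near region, the mean value theorem gives
\[
|\e^{S_\eps(y)-S_\eps(x)}-1|\le |S_\eps(y)-S_\eps(x)|\,\e^{2\|S_\eps\|_{L^\infty}}\le C(\omega)|y-x|^{\frac{1}{2}-\kappa'},
\]
while the Hölder regularity of $w$ gives $|w(y)-w(x)|\le\|w\|_{\mathcal{C}^{\frac{1}{2}+\kappa}}|y-x|^{\frac{1}{2}+\kappa}$ for $|y-x|\le 1$. The integrand in the definition \eqref{Xidef} is therefore bounded in absolute value by $C(\omega)\|w\|_{\mathcal{C}^{\frac{1}{2}+\kappa}}|y-x|^{-1+\kappa-\kappa'}$, which is absolutely integrable over $|y-x|\le 1$ since $\kappa>\kappa'$. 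In particular, because both factors in the numerator vanish at $y=x$, the principal value is actually an absolutely convergent integral and no cancellation is needed.

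For the far region, we use the crude bounds $|\e^{S_\eps(y)-S_\eps(x)}-1|\le C(\omega)$ and $|w(y)-w(x)|\le 2\|w\|_{L^\infty}\le 2\|w\|_{\mathcal{C}^{\frac{1}{2}+\kappa}}$, together with $\int_{|y-x|>1}(y-x)^{-2}\,\dif y<\infty$. Combining the two regions yields
\[
\|\Xi_\eps w\|_{L^\infty}\le C(\omega)\|w\|_{\mathcal{C}^{\frac{1}{2}+\kappa}},
\]
which establishes both measurability in $x$ and the claimed bounded linearity (the same bound applies with $\eps=0$ since $S_0=S$ also lies in $\mathcal{C}^{\frac{1}{2}-\kappa'}$ a.s.\ by \lemref{Sepsregularity}). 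The main (very mild) obstacle is just verifying that the power gain $\kappa-\kappa'>0$ in the near-region integrand can actually be achieved, which forces us to choose $\kappa'<\kappa$; the exponential dependence on $\|S_\eps\|_{L^\infty}$ is harmless because $S_\eps$ is periodic and Hölder continuous, hence bounded.
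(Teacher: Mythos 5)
Your proof is correct and takes essentially the same route as the paper's: you bound $|\e^{S_\eps(y,x)}-1|$ by a constant (depending on $\|S_\eps\|_{\mathcal{C}^{1/2-\kappa'}}$) times $|y-x|^{1/2-\kappa'}$ near the diagonal, combine with the Hölder bound on $w(y)-w(x)$, and integrate against $(y-x)^{-2}$, observing that the integral is absolutely convergent uniformly in $x$. The paper packages the near/far split as a single uniform estimate \eqref{expbd} paired with the factor $|y-x|^{1/2+\kappa}\wedge 1$ from the definition of the $\mathcal{C}^{1/2+\kappa}$ norm, and takes $\kappa'=\kappa/2$ rather than a generic $\kappa'\in(0,\kappa)$, but these are purely cosmetic differences.
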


\begin{proof}
We have 
\begin{equation}
\sup\limits _{x,y\in\mathbf{R}}\frac{|\e^{S_{\eps}(y,x)}-1|}{|y-x|^{\frac{1}{2}-\frac{\kappa}{2}}}\le1+\exp\{2\|S_{\eps}\|_{\mathcal{C}^{\frac{1}{2}-\frac{\kappa}{2}}}\},\label{eq:expbd}
\end{equation}
so by the triangle inequality, 
\[
|\Xi_{\eps}v(x)|\le C(1+\exp\{2\|S_{\eps}\|_{\mathcal{C}^{\frac{1}{2}-\frac{\kappa}{2}}}\})\|v\|_{\mathcal{C}^{\frac{1}{2}+\kappa}}\int\frac{|x-y|^{\frac{1}{2}-\frac{\kappa}{2}}(|x-y|^{\frac{1}{2}+\kappa}\wedge1)}{(y-x)^{2}}\,\dif y,
\]
and the integral is bounded independently of $x$.
\end{proof}
\begin{prop}
\label{prop:Xiconv}For any $\kappa>0$, we have $\Xi_{\varepsilon}\to\Xi$
in probability with respect to the norm topology of the space $\mathcal{B}(\mathcal{C}^{\frac{1}{2}+\kappa},L^{\infty})$
of bounded linear operators from $\mathcal{C}^{\frac{1}{2}+\kappa}$
to $L^{\infty}$.
\end{prop}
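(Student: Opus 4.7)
The plan is to bound the operator norm $\|\Xi_\eps - \Xi\|_{\mathcal{B}(\mathcal{C}^{1/2+\kappa},L^\infty)}$ pointwise in $x$ by a single random prefactor that tends to $0$ in probability thanks to \lemref{Sepsregularity}, so the convergence becomes a direct consequence of that lemma together with tightness of the $\mathcal{C}^{1/2-\kappa'}$ norms of $S_\eps$ and $S$. Without loss of generality I would assume $\kappa\in(0,1/4)$; the general case is immediate from the embedding $\mathcal{C}^{1/2+\kappa'}\hookrightarrow\mathcal{C}^{1/2+\kappa}$ for $\kappa'>\kappa$.

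First, for any $w\in\mathcal{C}^{1/2+\kappa}$ and $x\in\mathbf{R}$, I would write
\[
(\Xi_\eps-\Xi)w(x)=\frac{1}{\pi}\int\frac{\bigl(\e^{S_\eps(y,x)}-\e^{S(y,x)}\bigr)\bigl(w(y)-w(x)\bigr)}{(y-x)^2}\,\dif y,
\]
where the principal value can be dropped because the combined Hölder factors in the numerator make the integrand absolutely integrable (verified below). Next I would apply the elementary inequality $|\e^a-\e^b|\le|a-b|\e^{|a|\vee|b|}$ together with $|S_\eps(y,x)|,|S(y,x)|\le2(\|S_\eps\|_{L^\infty}\vee\|S\|_{L^\infty})$ and the Hölder bound $|S_\eps(y,x)-S(y,x)|\le C\|S_\eps-S\|_{\mathcal{C}^{1/2-\kappa/2}}(|y-x|^{1/2-\kappa/2}\wedge 1)$ to obtain
\[
|\e^{S_\eps(y,x)}-\e^{S(y,x)}|\le CR_\eps(|y-x|^{1/2-\kappa/2}\wedge 1),
\]
where $R_\eps=\|S_\eps-S\|_{\mathcal{C}^{1/2-\kappa/2}}\exp\bigl(2(\|S_\eps\|_{\mathcal{C}^{1/2-\kappa/2}}\vee\|S\|_{\mathcal{C}^{1/2-\kappa/2}})\bigr)$. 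Combined with $|w(y)-w(x)|\le C\|w\|_{\mathcal{C}^{1/2+\kappa}}(|y-x|^{1/2+\kappa}\wedge 1)$, this gives
\[
|(\Xi_\eps-\Xi)w(x)|\le CR_\eps\|w\|_{\mathcal{C}^{1/2+\kappa}}\int\frac{(|y-x|^{1/2-\kappa/2}\wedge 1)(|y-x|^{1/2+\kappa}\wedge 1)}{(y-x)^2}\,\dif y.
\]
The integral is finite and independent of $x$: near $y=x$ the integrand is of order $|y-x|^{-1+\kappa/2}$, and for $|y-x|\ge 1$ it is of order $|y-x|^{-2}$. This is exactly the reason for choosing the auxiliary exponent $1/2-\kappa/2$ rather than $1/2-\kappa$.

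Taking the supremum over $x$ and then over $\|w\|_{\mathcal{C}^{1/2+\kappa}}\le 1$, I conclude $\|\Xi_\eps-\Xi\|_{\mathcal{B}(\mathcal{C}^{1/2+\kappa},L^\infty)}\le CR_\eps$. Finally, \lemref{Sepsregularity} gives $\|S_\eps-S\|_{\mathcal{C}^{1/2-\kappa/2}}\to 0$ in probability, and it also implies that $\|S_\eps\|_{\mathcal{C}^{1/2-\kappa/2}}\vee\|S\|_{\mathcal{C}^{1/2-\kappa/2}}$ is tight (a convergent-in-probability sequence of random variables is bounded in probability, and the limit is a.s.\ finite). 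By continuity of $\exp$, the product $R_\eps$ therefore converges to $0$ in probability. There is no real obstacle in this argument; the only delicate bookkeeping step is the choice of the auxiliary Hölder exponent for $S_\eps-S$ to be strictly smaller than $\kappa$ so that the singularity $|y-x|^{-1+\kappa/2}$ is integrable while still capturing enough regularity to benefit from Hölder convergence of $S_\eps$ to $S$.
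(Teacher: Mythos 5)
Your argument is correct and is essentially the same as the paper's: you start from the same integral formula for $(\Xi_\eps-\Xi)w(x)$, apply the mean-value inequality to $\e^{S_\eps(y,x)}-\e^{S(y,x)}$ with the same auxiliary Hölder exponent $\tfrac12-\tfrac\kappa2$, combine with the $\mathcal{C}^{1/2+\kappa}$ modulus of $w$ to get an integrable kernel, take suprema, and conclude from \lemref{Sepsregularity} together with the standard fact that a product of a sequence tending to $0$ in probability with a tight sequence tends to $0$ in probability. The only cosmetic differences are that you truncate the factor $|y-x|^{1/2-\kappa/2}$ by $1$ (the paper does not, since for $|y-x|\ge1$ the untruncated factor dominates the $L^\infty$ bound anyway) and that you make the reduction to $\kappa<1/4$ explicit; neither changes the substance.
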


\begin{proof}
We have for all $x,y\in\mathbf{R}$ that
\[
|\e^{S_{\eps}(y,x)}-\e^{S(y,x)}|\le2\exp\{2\|S_{\eps}\|_{L^{\infty}}+2\|S\|_{L^{\infty}}\}\|S_{\eps}-S\|_{\mathcal{C}^{\frac{1}{2}-\frac{\kappa}{2}}}|y-x|^{\frac{1}{2}-\frac{\kappa}{2}},
\]
so by \lemref{Sepsregularity}, we have $\sup\limits _{x,y\in\mathbf{R}}\frac{|\e^{S_{\eps}(y,x)}-\e^{S(y,x)}|}{|y-x|^{\frac{1}{2}-\frac{\kappa}{2}}}\to0$
in probability as $\eps\to0$. Now we write, for any $v\in\mathcal{C}^{\frac{1}{2}+\kappa}$,
\begin{align*}
\left|(\Xi-\Xi_{\varepsilon})v(x)\right| & \le\frac{1}{\pi}\left|\int\frac{(e^{S_{\varepsilon}(y,x)}-e^{S(y,x)})\left(v(y)-v(x)\right)}{(y-x)^{2}}\,\dif y\right|\\
 & \le C\|v\|_{\mathcal{C}^{\frac{1}{2}+\kappa}}\left(\sup_{x,y\in\mathbf{R}}\frac{\left|e^{S_{\varepsilon}(y,x)}-e^{S(y,x)}\right|}{|y-x|^{\frac{1}{2}-\frac{\kappa}{2}}}\right)\int\frac{(|y-x|^{\frac{1}{2}+\kappa}\wedge1)|y-x|^{\frac{1}{2}-\frac{\kappa}{2}}}{(y-x)^{2}}\,\dif y.
\end{align*}
The right side is finite and independent of $v$, and as $\eps\to0$
converges to $0$ in probability.
\end{proof}

\section{The fixed-point argument\label{sec:fixedpoint}}

Fix $\kappa\in(0,1/4)$ and $T>0$, and define $\mathcal{X}_{T}^{\kappa}(\mathcal{Y})$
for any Banach space $\mathcal{Y}$ as in the introduction. We will
construct a solution to \eqref{vepsPDE} in the space $\mathcal{X}_{T}^{\kappa}(\mathcal{C}^{\frac{1}{2}+\kappa})$
using a fixed-point argument. For $g\in\mathcal{C}^{-\kappa}$, $\Psi\in\mathcal{B}(\mathcal{C}^{\frac{1}{2}+\kappa},\mathcal{C}^{-\kappa})$,
and $\underline{v}\in\mathcal{C}^{-\frac{1}{2}+2\kappa}$, define
the affine operator $\mathcal{M}_{g,\Psi,\underline{v}}$ on $\mathcal{X}_{T}^{\kappa}(\mathcal{C}^{\frac{1}{2}+\kappa})$
(see \corref{contraction} below) by
\[
\mathcal{M}_{g,\Psi,\underline{v}}v(t,x)=\mathcal{L}_{g,\Psi}v(t,x)+P_{t}*\underline{v}(x),\text{ with }\mathcal{L}_{g,\Psi}v(t,x)=\int_{0}^{t}P_{t-s}*\left(v(s,\cdot)\cdot g+\Psi v(s,\cdot)\right)\,\dif s.
\]
Recall that the fractional heat kernel $P_{t}$ was introduced before
\lemref{fractionalheatregularity}. Solutions $v_{\eps}$ of \eqref{vepsPDE}
are exactly fixed points of the map $\mathcal{M}_{Z_{\eps}-F*\xi_{\eps},\Xi_{\eps},\underline{v}_{\eps}}$.
We aim to show that there is a unique such fixed point. We start by
bounding the operator norm of $\mathcal{L}_{g,\Psi}$.
\begin{lem}
\label{lem:Lcts}We have a $C<\infty$ so that if $g\in\mathcal{C}^{-\kappa}$
and $\Psi\in\mathcal{B}(\mathcal{C}^{\frac{1}{2}+\kappa},\mathcal{C}^{-\kappa})$,
then $\mathcal{L}_{g,\Psi}\in\mathcal{B}(\mathcal{X}_{T}^{\kappa}(\mathcal{C}^{\frac{1}{2}+\kappa}))$
and $\|\mathcal{L}_{g,\Psi}\|_{\mathcal{B}(\mathcal{X}_{T}^{\kappa}(\mathcal{C}^{\frac{1}{2}+\kappa}))}\le C(\|g\|_{\mathcal{C}^{-\kappa}}+\|\Psi\|_{\mathcal{B}(\mathcal{C}^{\frac{1}{2}+\kappa},\mathcal{C}^{-\kappa})})T^{\frac{1}{2}-2\kappa}$.
\end{lem}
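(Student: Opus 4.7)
The plan is to bound $\|\mathcal{L}_{g,\Psi}v(t,\cdot)\|_{\mathcal{C}^{1/2+\kappa}}$ for each fixed $t\in(0,T]$, multiply by $t^{1-\kappa}$, and take the supremum. The approach is standard: apply \lemref{multcont} to the product term, the operator norm of $\Psi$ to the linear term, and then the Schauder estimate \lemref{fractionalheatregularity} to the convolution with the Cauchy kernel. The time integral that emerges will be a Beta function whose finiteness is exactly the condition $\kappa<1/4$.

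First I would estimate the integrand inside the definition of $\mathcal{L}_{g,\Psi}$ in $\mathcal{C}^{-\kappa}$. Applying \lemref{multcont} with $\alpha=-\kappa$ and $\beta=\frac{1}{2}+\kappa$ (whose hypotheses $\alpha<\beta$ and $\alpha+\beta=\frac{1}{2}>0$ are both satisfied) gives $\|v(s,\cdot)\cdot g\|_{\mathcal{C}^{-\kappa}}\le C\|g\|_{\mathcal{C}^{-\kappa}}\|v(s,\cdot)\|_{\mathcal{C}^{\frac{1}{2}+\kappa}}$, while by the definition of the operator norm $\|\Psi v(s,\cdot)\|_{\mathcal{C}^{-\kappa}}\le\|\Psi\|_{\mathcal{B}(\mathcal{C}^{\frac{1}{2}+\kappa},\mathcal{C}^{-\kappa})}\|v(s,\cdot)\|_{\mathcal{C}^{\frac{1}{2}+\kappa}}$. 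Invoking the definition \eqref{XkappaTdef}, we have $\|v(s,\cdot)\|_{\mathcal{C}^{\frac{1}{2}+\kappa}}\le s^{-(1-\kappa)}\|v\|_{\mathcal{X}_T^\kappa(\mathcal{C}^{\frac{1}{2}+\kappa})}$.

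Next I would apply \lemref{fractionalheatregularity}, again with $\alpha=-\kappa$ and $\beta=\frac{1}{2}+\kappa$, so $\beta-\alpha=\frac{1}{2}+2\kappa$. Combining the bounds and integrating gives
\[
\|\mathcal{L}_{g,\Psi}v(t,\cdot)\|_{\mathcal{C}^{\frac{1}{2}+\kappa}}\le C\bigl(\|g\|_{\mathcal{C}^{-\kappa}}+\|\Psi\|_{\mathcal{B}(\mathcal{C}^{\frac{1}{2}+\kappa},\mathcal{C}^{-\kappa})}\bigr)\|v\|_{\mathcal{X}_T^\kappa(\mathcal{C}^{\frac{1}{2}+\kappa})}\int_{0}^{t}(t-s)^{-(\frac{1}{2}+2\kappa)}s^{-(1-\kappa)}\,\dif s.
\]
The substitution $s=tu$ evaluates the integral as $t^{-\frac{1}{2}-\kappa}B(\kappa,\frac{1}{2}-2\kappa)$; this is the one quantitative step, and the Beta function converges precisely because $\kappa\in(0,\frac{1}{4})$, which is where the upper bound on $\kappa$ enters. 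Multiplying through by $t^{1-\kappa}$ produces the factor $t^{\frac{1}{2}-2\kappa}\le T^{\frac{1}{2}-2\kappa}$, uniformly in $t\in(0,T]$, yielding the stated operator bound.

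There is no substantive obstacle here: the argument is a routine combination of the multiplicative and Schauder estimates already set up in \secref{preliminaries}. The only subtlety to watch is the matching of regularity indices so that the Beta integral converges at both endpoints, which forces $\kappa\in(0,\frac{1}{4})$; this is exactly the range in which \thmref{maintheorem} is formulated, so the hypothesis is naturally consistent.
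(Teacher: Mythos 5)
Your proof is correct and follows essentially the same route as the paper's: bound the integrand in $\mathcal{C}^{-\kappa}$ via \lemref{multcont} and the operator norm of $\Psi$, apply the Schauder estimate \lemref{fractionalheatregularity} with $\alpha=-\kappa$, $\beta=\frac12+\kappa$, and evaluate the resulting Beta integral, which converges exactly when $\kappa\in(0,\frac14)$. The only cosmetic difference is that you make the Beta-function computation fully explicit where the paper merely notes the integrability condition.
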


\begin{proof}
By \lemref[s]{fractionalheatregularity}~and~\ref{lem:multcont},
we have a constant $C<\infty$ so that, for $v\in\mathcal{X}_{T}^{\kappa}(\mathcal{C}^{\frac{1}{2}+\kappa})$,
\begin{align}
\|\mathcal{L}_{g,\Psi}v(t,\cdot)\|_{\mathcal{C}^{\frac{1}{2}+\kappa}} & \le C\int_{0}^{t}(t-s)^{-\frac{1}{2}-2\kappa}\left(\|v(s,\cdot)\cdot g\|_{\mathcal{C}^{-\kappa}}+\|\Psi v(s,\cdot)\|_{\mathcal{C}^{-\kappa}}\right)\,\dif s;\label{eq:useregularity}\\
\|v(s,\cdot)\cdot g\|_{\mathcal{C}^{-\kappa}} & \le Cs^{-1+\kappa}\|g\|_{\mathcal{C}^{-\kappa}}\|v\|_{\mathcal{X}_{T}^{\kappa}(\mathcal{C}^{\frac{1}{2}+\kappa})};\label{eq:vsgbound}\\
\|\Psi v(s,\cdot)\|_{\mathcal{C}^{-\kappa}} & \le s^{-1+\kappa}\|\Psi\|_{\mathcal{B}(\mathcal{C}^{\frac{1}{2}+\kappa},\mathcal{C}^{-\kappa})}\|v\|_{\mathcal{X}_{T}^{\kappa}(\mathcal{C}^{\frac{1}{2}+\kappa})}.\label{eq:Psibound}
\end{align}
Plugging \eqref{vsgbound} and \eqref{Psibound} into \eqref{useregularity}
and integrating (using that $\kappa<1/4$), we obtain
\begin{align*}
\|\mathcal{L}_{g,\Psi}v(t,\cdot)\|_{\mathcal{C}^{\frac{1}{2}+\kappa}} & \le C\|v\|_{\mathcal{X}_{T}^{\kappa}(\mathcal{C}^{\frac{1}{2}+\kappa})}(\|g\|_{\mathcal{C}^{-\kappa}}+\|\Psi\|_{\mathcal{B}(\mathcal{C}^{\frac{1}{2}+\kappa},\mathcal{C}^{-\kappa})})t^{-\frac{1}{2}-\kappa}.
\end{align*}
Then the conclusion follows from the definition \eqref{XkappaTdef}
of the $\mathcal{X}_{T}^{\kappa}(\mathcal{C}^{\frac{1}{2}+\kappa})$
norm.
\end{proof}
If $T$ is chosen sufficiently small, \lemref{Lcts} implies that
$\mathcal{M}_{g,\Psi,\underline{v}}$ is a contraction map:
\begin{cor}
\label{cor:contraction}There is a $C<\infty$ so that for any $g\in\mathcal{C}^{-\kappa}$,
$\Psi\in\mathcal{B}(\mathcal{C}^{\frac{1}{2}+\kappa},\mathcal{C}^{-\kappa})$,
and $\underline{v}\in\mathcal{C}^{-\frac{1}{2}+2\kappa}$, the map
$\mathcal{M}_{g,\Psi,\underline{v}}:\mathcal{X}_{T}^{\kappa}(\mathcal{C}^{\frac{1}{2}+\kappa})\to\mathcal{X}_{T}^{\kappa}(\mathcal{C}^{\frac{1}{2}+\kappa})$
is continuous, and if
\begin{equation}
T<[C(\|g\|_{\mathcal{C}^{-\kappa}}+\|\Psi\|_{\mathcal{B}(\mathcal{C}^{\frac{1}{2}+\kappa},\mathcal{C}^{-\kappa})})]^{-1/(1/2-2\kappa)}\eqqcolon T_{0}(\|g\|_{\mathcal{C}^{-\kappa}},\|\Psi\|_{\mathcal{B}(\mathcal{C}^{\frac{1}{2}+\kappa},\mathcal{C}^{-\kappa})}),\label{eq:Tsize}
\end{equation}
then $\mathcal{M}_{g,\Psi,\underline{v}}$ is a contraction map.
\end{cor}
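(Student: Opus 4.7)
The proof of Corollary 7.2 should be essentially a direct application of Lemma 7.1, since $\mathcal{M}_{g,\Psi,\underline{v}}$ is an affine map whose linear part is exactly $\mathcal{L}_{g,\Psi}$, and the inhomogeneous term $P_{t}*\underline{v}$ is independent of $v$.

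First I would verify that $\mathcal{M}_{g,\Psi,\underline{v}}$ maps $\mathcal{X}_{T}^{\kappa}(\mathcal{C}^{\frac{1}{2}+\kappa})$ to itself. The linear part is handled by Lemma 7.1, which gives $\mathcal{L}_{g,\Psi}\in\mathcal{B}(\mathcal{X}_{T}^{\kappa}(\mathcal{C}^{\frac{1}{2}+\kappa}))$. For the inhomogeneous term, I apply Lemma 2.4 with $\alpha=-\frac{1}{2}+2\kappa$ and $\beta=\frac{1}{2}+\kappa$ (so $\beta-\alpha=1-\kappa$) to obtain
\[
\|P_{t}*\underline{v}\|_{\mathcal{C}^{\frac{1}{2}+\kappa}}\le Ct^{-(1-\kappa)}\|\underline{v}\|_{\mathcal{C}^{-\frac{1}{2}+2\kappa}},
\]
which after multiplying by $t^{1-\kappa}$ and taking the supremum over $t\in(0,T]$ shows $P_{\cdot}*\underline{v}\in\mathcal{X}_{T}^{\kappa}(\mathcal{C}^{\frac{1}{2}+\kappa})$ with norm controlled by $\|\underline{v}\|_{\mathcal{C}^{-\frac{1}{2}+2\kappa}}$.

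Next, continuity of $\mathcal{M}_{g,\Psi,\underline{v}}$ follows because it is an affine map whose linear part $\mathcal{L}_{g,\Psi}$ is bounded (hence continuous) on $\mathcal{X}_{T}^{\kappa}(\mathcal{C}^{\frac{1}{2}+\kappa})$ by Lemma 7.1. For the contraction property, since the inhomogeneous term cancels,
\[
\mathcal{M}_{g,\Psi,\underline{v}}v_{1}-\mathcal{M}_{g,\Psi,\underline{v}}v_{2}=\mathcal{L}_{g,\Psi}(v_{1}-v_{2}),
\]
and Lemma 7.1 gives
\[
\|\mathcal{M}_{g,\Psi,\underline{v}}v_{1}-\mathcal{M}_{g,\Psi,\underline{v}}v_{2}\|_{\mathcal{X}_{T}^{\kappa}(\mathcal{C}^{\frac{1}{2}+\kappa})}\le C\bigl(\|g\|_{\mathcal{C}^{-\kappa}}+\|\Psi\|_{\mathcal{B}(\mathcal{C}^{\frac{1}{2}+\kappa},\mathcal{C}^{-\kappa})}\bigr)T^{\frac{1}{2}-2\kappa}\|v_{1}-v_{2}\|_{\mathcal{X}_{T}^{\kappa}(\mathcal{C}^{\frac{1}{2}+\kappa})}.
\]
Imposing that the prefactor be strictly less than $1$ is exactly the requirement \eqref{Tsize}, so for such $T$ the map is a strict contraction.

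There is no real obstacle here: the work is all in Lemma 7.1 (which in turn relied on the Schauder estimate of Lemma 2.4 and the multiplication bound of Lemma 2.6). The only minor point worth being careful about is that the exponent $\frac{1}{2}-2\kappa$ in Lemma 7.1 is positive, which uses $\kappa<1/4$ and ensures $T^{\frac{1}{2}-2\kappa}\to0$ as $T\downarrow0$, so that the condition \eqref{Tsize} can be achieved; this is consistent with the standing assumption $\kappa\in(0,1/4)$ at the top of the section.
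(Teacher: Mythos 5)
Your proof is correct and matches the paper's argument essentially line for line: you apply the Schauder estimate (\lemref{fractionalheatregularity}) with $\alpha=-\frac{1}{2}+2\kappa$, $\beta=\frac{1}{2}+\kappa$ to place $t\mapsto P_t*\underline{v}$ in $\mathcal{X}_T^\kappa(\mathcal{C}^{\frac{1}{2}+\kappa})$, then observe that $\mathcal{M}_{g,\Psi,\underline{v}}v-\mathcal{M}_{g,\Psi,\underline{v}}\tilde v=\mathcal{L}_{g,\Psi}(v-\tilde v)$ and invoke \lemref{Lcts} for both continuity and the contraction bound, with the condition \eqref{Tsize} arising from forcing that bound below $1$. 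The only slip is cosmetic: your references ``Lemma 7.1,'' ``Lemma 2.4,'' and ``Lemma 2.6'' should be \lemref{Lcts}, \lemref{fractionalheatregularity}, and \lemref{multcont} respectively.
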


\begin{proof}
If $v\in\mathcal{X}_{T}^{\kappa}(\mathcal{C}^{\frac{1}{2}+\kappa})$,
then we have $\mathcal{L}_{g,\Psi}v\in\mathcal{X}_{T}^{\kappa}(\mathcal{C}^{\frac{1}{2}+\kappa})$
by \lemref{Lcts}. By \lemref{fractionalheatregularity}, we have
a constant $C$ so that $\|P_{t}*\underline{v}\|_{\mathcal{C}^{\frac{1}{2}+\kappa}}\le Ct^{-1+\kappa}\|\underline{v}\|_{\mathcal{C}^{-\frac{1}{2}+2\kappa}}$.
This implies that $t\mapsto P_{t}*\underline{v}$ is an element of
$\mathcal{X}_{T}^{\kappa}(\mathcal{C}^{\frac{1}{2}+\kappa})$ as well.
Therefore, $\mathcal{M}_{g,\Psi,\underline{v}}v\in\mathcal{X}_{T}^{\kappa}(\mathcal{C}^{\frac{1}{2}+\kappa})$.
Since $\mathcal{M}_{g,\Psi,\underline{v}}v-\mathcal{M}_{g,\Psi,\underline{v}}\tilde{v}=\mathcal{L}_{g,\Psi}v-\mathcal{L}_{g,\Psi}\tilde{v}$,
the continuity and contraction come from \lemref{Lcts}.
\end{proof}
We now use the contraction mapping principle to construct fixed points
of $\mathcal{M}_{g,\Psi,\underline{v}}$.
\begin{lem}
For any $T<\infty$, $\mathcal{M}_{g,\Psi,\underline{v}}$ has a unique
fixed point $\mathcal{V}_{T}(g,\Psi,\underline{v})$ in $\mathcal{X}_{T}^{\kappa}(\mathcal{C}^{\frac{1}{2}+\kappa})$.
\end{lem}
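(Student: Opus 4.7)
The plan is to apply the Banach fixed-point theorem on a sequence of short time intervals of a common length $T_0$ depending only on $\|g\|_{\mathcal{C}^{-\kappa}}$ and $\|\Psi\|_{\mathcal{B}(\mathcal{C}^{\frac{1}{2}+\kappa},\mathcal{C}^{-\kappa})}$, and then glue the resulting pieces together using the semigroup property of the fractional heat kernel. This is the standard strategy, analogous to the one used in \cite[Section~4]{HL15}. By \corref{contraction}, for $T \le T_0 := T_0(\|g\|_{\mathcal{C}^{-\kappa}}, \|\Psi\|_{\mathcal{B}(\mathcal{C}^{\frac{1}{2}+\kappa},\mathcal{C}^{-\kappa})})$ the map $\mathcal{M}_{g,\Psi,\underline{v}}$ is a strict contraction on the Banach space $\mathcal{X}_{T}^\kappa(\mathcal{C}^{\frac{1}{2}+\kappa})$, so the contraction mapping principle immediately furnishes a unique fixed point on $[0,T_0]$.

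To extend to arbitrary $T<\infty$, I iterate in blocks of length $T_0$. Suppose a fixed point $v$ has been constructed on $[0,nT_0]$. Since evaluating an element of $\mathcal{X}_{nT_0}^\kappa(\mathcal{C}^{\frac{1}{2}+\kappa})$ at the positive time $nT_0$ gives $v(nT_0,\cdot)\in\mathcal{C}^{\frac{1}{2}+\kappa}\hookrightarrow\mathcal{C}^{-\frac{1}{2}+2\kappa}$, this is an admissible initial condition for a new application of the fixed-point argument. Applying \corref{contraction} on $[0,T_0]$ with initial condition $v(nT_0,\cdot)$ produces a unique fixed point $w$ of $\mathcal{M}_{g,\Psi,v(nT_0,\cdot)}$, and I set $v(nT_0+s,\cdot):=w(s,\cdot)$ for $s\in(0,T_0]$. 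Using the semigroup identity $P_{t-nT_0}*P_{nT_0-s}=P_{t-s}$ together with the mild formulation on the first block, a direct calculation shows that the glued function satisfies $v=\mathcal{M}_{g,\Psi,\underline{v}}v$ on $[0,(n+1)T_0]$. Crucially, $T_0$ depends only on $\|g\|$ and $\|\Psi\|$ and not on the initial condition, so finitely many iterations reach any prescribed $T$, and the resulting function lies in $\mathcal{X}_T^\kappa(\mathcal{C}^{\frac{1}{2}+\kappa})$: the singular weight $t^{1-\kappa}$ is only binding near $t=0$, where the first block is in the space by construction, while on each later block the weight is bounded below and the function is uniformly bounded in $\mathcal{C}^{\frac{1}{2}+\kappa}$ by the contraction applied on that block.

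Uniqueness on $[0,T]$ follows by the same inductive scheme: two fixed points must agree on $[0,T_0]$ by uniqueness of the initial contraction, hence take the same value at $T_0$, hence agree on $[T_0,2T_0]$ by uniqueness of the shifted contraction, and so on. The only step that is not completely routine is the verification of the semigroup-based gluing identity, but this amounts to convolving the Duhamel formula for $v$ on the previous block with $P_{t-nT_0}$ and collecting terms; all analytic input is already contained in \lemref{Lcts} and \corref{contraction}.
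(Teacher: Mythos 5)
Your approach is the same as the paper's (and as the cited \cite[Proposition 4.1]{HL15} on which the paper relies): apply \corref{contraction} on a short interval whose length $T_0$ depends only on $\|g\|_{\mathcal{C}^{-\kappa}}$ and $\|\Psi\|_{\mathcal{B}(\mathcal{C}^{\frac{1}{2}+\kappa},\mathcal{C}^{-\kappa})}$, not on the initial data, then iterate in blocks and glue via the semigroup property of $P_t$. Your gluing identity and your uniqueness argument are correct.

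One small imprecision to be aware of: you justify membership of the glued function in $\mathcal{X}_T^{\kappa}(\mathcal{C}^{\frac{1}{2}+\kappa})$ by saying that on each later block the function is ``uniformly bounded in $\mathcal{C}^{\frac{1}{2}+\kappa}$ by the contraction applied on that block.'' But the contraction of \corref{contraction} gives the block solution $w$ only in $\mathcal{X}_{T_0}^{\kappa}(\mathcal{C}^{\frac{1}{2}+\kappa})$, whose norm merely controls $s^{1-\kappa}\|w(s,\cdot)\|_{\mathcal{C}^{\frac{1}{2}+\kappa}}$, so $\|w(s,\cdot)\|_{\mathcal{C}^{\frac{1}{2}+\kappa}}$ may blow up like $s^{-1+\kappa}$ as $s\to 0^+$, i.e.\ precisely at the start of the new block where the global weight $t^{1-\kappa}$ is bounded away from zero and therefore does not absorb the blow-up. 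The standard repair, which you have available because the block initial data $v(nT_0,\cdot)$ lies in $\mathcal{C}^{\frac{1}{2}+\kappa}$ rather than merely in $\mathcal{C}^{-\frac{1}{2}+2\kappa}$, is either to bootstrap the mild formulation of $w$ (the linear part is bounded since $P_s$ preserves $\mathcal{C}^{\frac{1}{2}+\kappa}$, and the Duhamel term improves the exponent by $\frac{1}{2}-2\kappa>0$ per iteration, so finitely many iterations give a uniform bound), or to set up the contraction on later blocks directly in $\mathcal{C}\bigl([nT_0,(n+1)T_0];\mathcal{C}^{\frac{1}{2}+\kappa}\bigr)$ and then identify its fixed point with the $\mathcal{X}^{\kappa}$ one by uniqueness. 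The paper itself elides this detail by deferring to \cite{HL15}, so this is a refinement rather than a defect relative to the written proof, but it is worth making explicit.
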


\begin{proof}
This holds for $T<T_{0}(g,\Psi)$ by \corref{contraction}. As $T_{0}(g,\Psi)$
does not depend on $\underline{v}$, the construction can be extended
to all $T$ as in the proof of \cite[Proposition 4.1]{HL15}.
\end{proof}
Now, as in \cite[Proposition 4.2]{HL15}, we show that the solution
map $\mathcal{V}_{T}$ is continuous using a mild solution argument.
\begin{prop}
\label{prop:fixed-point-continuous}For $T<\infty$, the map $\mathcal{V}_{T}:\mathcal{C}^{-\kappa}\times\mathcal{B}(\mathcal{C}^{\frac{1}{2}+\kappa},\mathcal{C}^{-\kappa})\times\mathcal{C}^{-\frac{1}{2}+2\kappa}\to\mathcal{X}_{T}^{\kappa}(\mathcal{C}^{\frac{1}{2}+\kappa})$
is continuous.
\end{prop}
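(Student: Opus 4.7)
The plan is to follow the two-stage approach of \cite[Proposition 4.2]{HL15}: first establish continuity on a short time interval $[0,T_*]$ using that $\mathcal{M}_{g,\Psi,\underline{v}}$ is a contraction, then bootstrap to arbitrary $T$ by concatenating finitely many short pieces. Throughout I abbreviate $\mathcal{B}=\mathcal{B}(\mathcal{C}^{\frac{1}{2}+\kappa},\mathcal{C}^{-\kappa})$.

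For the local step, fix $(g_0,\Psi_0,\underline{v}_0)$ and choose $T_*$ small enough that $\|\mathcal{L}_{g,\Psi}\|_{\mathcal{B}(\mathcal{X}_{T_*}^{\kappa}(\mathcal{C}^{\frac{1}{2}+\kappa}))}\le\tfrac{1}{2}$ uniformly for $(g,\Psi)$ in some neighborhood of $(g_0,\Psi_0)$; this is possible by \lemref{Lcts}. For two inputs $(g,\Psi,\underline{v})$ and $(g',\Psi',\underline{v}')$ in this neighborhood, with fixed points $v$ and $v'$, the joint linearity of $(g,\Psi)\mapsto\mathcal{L}_{g,\Psi}$ yields the identity
\[
v-v'=\mathcal{L}_{g,\Psi}(v-v')+\mathcal{L}_{g-g',\,\Psi-\Psi'}v'+P_{\cdot}*(\underline{v}-\underline{v}').
\]
Bounding the first term by $\tfrac{1}{2}\|v-v'\|$, the second by \lemref{Lcts}, and the third by \lemref{fractionalheatregularity} (which gives $\|P_{\cdot}*(\underline{v}-\underline{v}')\|_{\mathcal{X}_{T_*}^{\kappa}(\mathcal{C}^{\frac{1}{2}+\kappa})}\le C\|\underline{v}-\underline{v}'\|_{\mathcal{C}^{-\frac{1}{2}+2\kappa}}$), then absorbing the $\tfrac{1}{2}\|v-v'\|$ term, produces
\[
\|v-v'\|_{\mathcal{X}_{T_*}^{\kappa}(\mathcal{C}^{\frac{1}{2}+\kappa})}\le C_{T_*}\bigl(\|g-g'\|_{\mathcal{C}^{-\kappa}}+\|\Psi-\Psi'\|_{\mathcal{B}}+\|\underline{v}-\underline{v}'\|_{\mathcal{C}^{-\frac{1}{2}+2\kappa}}\bigr).
\]
The prefactor $\|v'\|_{\mathcal{X}_{T_*}^{\kappa}}$ arising from the perturbation of $\mathcal{L}$ is absorbed into $C_{T_*}$ because it is uniformly bounded on a small neighborhood. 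This gives continuity of $\mathcal{V}_{T_*}$ at $(g_0,\Psi_0,\underline{v}_0)$.

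For general $T$, partition $[0,T]$ as $0=t_0<t_1<\cdots<t_N=T$ with $t_{k+1}-t_k<T_*$, choosing $T_*$ (and hence $N$) uniformly on a neighborhood of $(g_0,\Psi_0)$, which is possible since $T_0(\|g\|,\|\Psi\|)$ is continuous and strictly positive. On each $[t_k,t_{k+1}]$ the restriction of $v$ is the unique fixed point of the time-shifted Duhamel operator with initial datum $v(t_k,\cdot)\in\mathcal{C}^{\frac{1}{2}+\kappa}\hookrightarrow\mathcal{C}^{-\frac{1}{2}+2\kappa}$, whose norm is bounded by $t_k^{\kappa-1}\|v\|_{\mathcal{X}_{t_k}^{\kappa}}$. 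Iterating the local continuity statement across the $N$ subintervals---with the output $v(t_k,\cdot)$ of step $k$ fed in as a continuous-in-input initial datum for step $k+1$---transports continuity across $[0,T]$. The main technical subtlety is this bookkeeping: one must check that $T_*$ and $N$ remain uniform over a neighborhood, that the equivalence between the weighted $\mathcal{X}_T^{\kappa}$-norm and the unweighted $L_t^{\infty}\mathcal{C}^{\frac{1}{2}+\kappa}$-norm on each $[t_k,t_{k+1}]$ (for $k\ge1$) allows the shifted-time local argument to apply verbatim, and that the composition of the $N$ resulting continuous maps is continuous---all of which are straightforward once the local step is in hand.
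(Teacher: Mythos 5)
Your proposal is correct and follows essentially the same two-stage strategy as the paper: first use the contraction bound from \lemref{Lcts} to obtain local Lipschitz continuity of the fixed-point map on a short time interval via the decomposition $v-v'=\mathcal{L}_{g,\Psi}(v-v')+\mathcal{L}_{g-g',\Psi-\Psi'}v'+P_{\cdot}*(\underline{v}-\underline{v}')$ (the paper writes out the middle term explicitly rather than as $\mathcal{L}_{g-g',\Psi-\Psi'}$), and then extend to arbitrary $T$ by time-shifting. The only cosmetic difference is that the paper works on the uniform balls $A_M=\{\|g\|,\|\Psi\|\le M\}$ and extends by induction in increments of $T_0(M,M)$, whereas you localize around a fixed $(g_0,\Psi_0,\underline{v}_0)$ and partition $[0,T]$ into $N$ subintervals; these are equivalent, and you correctly identify the needed bookkeeping (uniformity of $T_*$ and $N$ over a neighborhood, the embedding $\mathcal{C}^{\frac12+\kappa}\hookrightarrow\mathcal{C}^{-\frac12+2\kappa}$ and the bound $\|v(t_k,\cdot)\|_{\mathcal{C}^{\frac12+\kappa}}\le t_k^{\kappa-1}\|v\|_{\mathcal{X}_{t_k}^\kappa}$ for restarting).
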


\begin{proof}
Let $M>0$ be arbitrary. We will show that $\mathcal{V}_{T}$ is continuous
on $A_{M}=\{(g,\Psi,\underline{v})\;:\;\|g\|_{\mathcal{C}^{-\kappa}},\|\Psi\|_{\mathcal{B}(\mathcal{C}^{\frac{1}{2}+\kappa},\mathcal{C}^{-\kappa})}\le M\}$.
First suppose that $T<T_{0}(M,M)$. For $(g,\Psi,\underline{v}),(\tilde{g},\tilde{\Psi},\underline{\tilde{v}})\in A_{M}$,
put $v=\mathcal{V}_{T}(g,\Psi,\underline{v})$ and $\tilde{v}=\mathcal{V}_{T}(\tilde{g},\tilde{\Psi},\underline{\tilde{v}})$,
so
\begin{align*}
(v-\tilde{v})(t,\cdot) & =\mathcal{L}_{g,\Psi}(v-\tilde{v})(t,\cdot)+\int_{0}^{t}P_{t-s}*[\tilde{v}(s,\cdot)(g-\tilde{g})+(\Psi-\tilde{\Psi})\tilde{v}(s,\cdot)]\,\dif s+P_{t}*(\underline{v}-\underline{\tilde{v}}).
\end{align*}
Thus for all $t\in(0,T]$ we have
\begin{multline*}
\|(v-\tilde{v})(t,\cdot)\|_{\mathcal{C}^{\frac{1}{2}+\kappa}}\le\|\mathcal{L}_{g,\Psi}(v-\tilde{v})(t,\cdot)\|_{\mathcal{C}^{\frac{1}{2}+\kappa}}+Ct^{-1+\kappa}\|\underline{v}-\underline{\tilde{v}}\|_{\mathcal{C}^{-\frac{1}{2}+2\kappa}}\\
+C(\|g-\tilde{g}\|_{\mathcal{C}^{-\kappa}}+\|\Psi-\tilde{\Psi}\|_{\mathcal{B}(\mathcal{C}^{\frac{1}{2}+\kappa},\mathcal{C}^{-\kappa})})\int_{0}^{t}(t-s)^{-\frac{1}{2}-2\kappa}\|\tilde{v}(s,\cdot)\|_{\mathcal{C}^{\frac{1}{2}+\kappa}}\,\dif s.
\end{multline*}
Therefore,
\begin{align*}
\|v-\tilde{v}\|_{\mathcal{X}_{T}^{\kappa}(\mathcal{C}^{\frac{1}{2}+\kappa})} & \le\|\mathcal{L}_{g,\Psi}\|_{\mathcal{B}(\mathcal{X}_{T}^{\kappa}(\mathcal{C}^{\frac{1}{2}+\kappa}))}\|v-\tilde{v}\|_{\mathcal{X}_{T}^{\kappa}(\mathcal{C}^{\frac{1}{2}+\kappa})}+C\|\underline{v}-\underline{\tilde{v}}\|_{\mathcal{C}^{-\frac{1}{2}+2\kappa}}\\
 & \quad+C(\|g-\tilde{g}\|_{\mathcal{C}^{-\kappa}}+\|\Psi-\tilde{\Psi}\|_{\mathcal{B}(\mathcal{C}^{\frac{1}{2}+\kappa},\mathcal{C}^{-\kappa})})\|\tilde{v}\|_{\mathcal{X}_{T}^{\kappa}(\mathcal{C}^{\frac{1}{2}+\kappa})}\int_{0}^{t}\frac{s^{-1+\kappa}}{(t-s)^{\frac{1}{2}+2\kappa}}\,\dif s\\
 & \le\|\mathcal{L}_{g,\Psi}\|_{\mathcal{B}(\mathcal{X}_{T}^{\kappa}(\mathcal{C}^{\frac{1}{2}+\kappa}))}\|v-\tilde{v}\|_{\mathcal{X}_{T}^{\kappa}(\mathcal{C}^{\frac{1}{2}+\kappa})}+C\|\underline{v}-\underline{\tilde{v}}\|_{\mathcal{C}^{-\frac{1}{2}+2\kappa}}\\
 & \quad+CM(\|g-\tilde{g}\|_{\mathcal{C}^{-\kappa}}+\|\Psi-\tilde{\Psi}\|_{\mathcal{B}(\mathcal{C}^{\frac{1}{2}+\kappa},\mathcal{C}^{-\kappa})})T^{\frac{1}{2}-\kappa}.
\end{align*}
Since $T<T_{0}(M,M)$, we have $\|\mathcal{L}_{g,\Psi}\|_{\mathcal{B}(\mathcal{X}_{T}^{\kappa}(\mathcal{C}^{\frac{1}{2}+\kappa}))}<1$,
so 
\[
\|v-\tilde{v}\|_{\mathcal{X}_{T}^{\kappa}(\mathcal{C}^{\frac{1}{2}+\kappa})}\le C\cdot\frac{\|\underline{v}-\underline{\tilde{v}}\|_{\mathcal{C}^{-\frac{1}{2}+2\kappa}}+M(\|g-\tilde{g}\|_{\mathcal{C}^{-\kappa}}+\|\Psi-\tilde{\Psi}\|_{\mathcal{B}(\mathcal{C}^{\frac{1}{2}+\kappa},\mathcal{C}^{-\kappa})})T^{\frac{1}{2}-\kappa}}{1-\|\mathcal{L}_{g,\Psi}\|_{\mathcal{B}(\mathcal{X}_{T}^{\kappa}(\mathcal{C}^{\frac{1}{2}+\kappa}))}}.
\]
This shows that $\mathcal{V}_{T}$ is continuous on $A_{M}$ when
$T<T_{0}(M,M)$.

Now suppose (as an inductive hypothesis) that $\mathcal{V}_{T}$ is
continuous on $A_{M}$ and let $T'\in(T,T+T_{0}(M,M))$. For $(g,\Psi,\underline{v}),(\tilde{g},\tilde{\Psi},\underline{\tilde{v}})\in A_{M}$,
put $v=\mathcal{V}_{T}(g,\Psi,\underline{v})$ and $\tilde{v}=\mathcal{V}_{T}(\tilde{g},\tilde{\Psi},\underline{\tilde{v}})$.
For $t\in[T,T']$, let $w(t,x)=v(t-T,x)$ and let $\tilde{w}(t,x)=\tilde{v}(t-T,x)$.
We have $w=\mathcal{V}_{T'-T}(g,\Psi,v(T,\cdot))$ and $\tilde{w}=\mathcal{V}_{T'-T}(g,\Psi,\tilde{v}(T,\cdot))$.
Since $\mathcal{V}_{T'-T}$ is continuous, so is $\mathcal{V}_{T'}$.

By induction, this implies that $\mathcal{V}_{T}$ is continuous on
$A_{M}$ for any $T$. Since this is true for any $M$, we see that,
for any $T$, $\mathcal{V}_{T}$ is in fact continuous on $\mathcal{C}^{-\kappa}\times\mathcal{B}(\mathcal{C}^{\frac{1}{2}+\kappa},\mathcal{C}^{-\kappa})\times\mathcal{C}^{-\frac{1}{2}+2\kappa}$.
\end{proof}
The continuity of the solution map then allows us to apply the stability
results proved in \secref{coefficients} to show that the solutions
converge.
\begin{cor}
\label{cor:vepsconv}If $\underline{u}\in\mathcal{C}^{-\frac{1}{2}+2\kappa}$,
then for all $\eps\in[0,1)$, for any $T>0$ there is a unique solution
$v_{\eps}\in\mathcal{X}_{T}^{\kappa}(\mathcal{C}^{\frac{1}{2}+\kappa})$
to \eqref{vepsPDE}, and $v_{\eps}$ converges to $v_{0}$ in probability
in $\mathcal{X}_{T}^{\kappa}(\mathcal{C}^{\frac{1}{2}+\kappa})$.
\end{cor}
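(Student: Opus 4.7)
The plan is to recognize $v_\eps$ as $\mathcal{V}_T(g_\eps, \Xi_\eps, \underline{v}_\eps)$, where $g_\eps = Z_\eps - F*\xi_\eps$ and $\underline{v}_\eps = \e^{-S_\eps}\underline{u}$, and then invoke the continuity of $\mathcal{V}_T$ from \propref{fixed-point-continuous}. Existence and uniqueness of the fixed point for each $\eps \in [0,1)$ will be immediate from the preceding lemma, once I verify that $g_\eps \in \mathcal{C}^{-\kappa}$, $\Xi_\eps \in \mathcal{B}(\mathcal{C}^{\frac{1}{2}+\kappa}, \mathcal{C}^{-\kappa})$, and $\underline{v}_\eps \in \mathcal{C}^{-\frac{1}{2}+2\kappa}$ almost surely.

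To handle these inputs I will use \propref{Zepsconv} for $Z_\eps$, \lemref{Xicts} and \propref{Xiconv} for $\Xi_\eps \in \mathcal{B}(\mathcal{C}^{\frac{1}{2}+\kappa}, L^\infty) \subset \mathcal{B}(\mathcal{C}^{\frac{1}{2}+\kappa}, \mathcal{C}^{-\kappa})$, and a short argument using the decay bound $|F(x)| + |F'(x)| \le C(1+|x|)^{-2}$ from \lemref{Gdef} together with the wavelet characterization of \propref{wavelets} to show that convolution with $F$ is a bounded map $\mathcal{C}^{-\frac{1}{2}-\kappa} \to \mathcal{C}^{-\kappa}$; then $F*\xi_\eps \in \mathcal{C}^{-\kappa}$ and $F*\xi_\eps \to F*\xi$ in probability will follow from \lemref{periodic-xi-regularity}. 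For the initial condition I will observe that $\kappa < 1/4$ forces $\mathcal{C}^{\frac{1}{2}-\kappa} \subset L^\infty$, so $\e^{-S_\eps}$ is well-defined pointwise; a mean-value computation on Hölder increments will show that $f \mapsto \e^{-f}$ is locally Lipschitz from $\mathcal{C}^{\frac{1}{2}-\kappa}$ into itself, and \lemref{Sepsregularity} will then yield $\e^{-S_\eps} \to \e^{-S}$ in that space in probability. Since the Hölder exponents $\alpha = -\frac{1}{2}+2\kappa$ and $\beta = \frac{1}{2}-\kappa$ satisfy $\alpha + \beta = \kappa > 0$ and $\alpha < \beta$ (using $\kappa < 1/4$), \lemref{multcont} will then produce both the membership $\underline{v}_\eps \in \mathcal{C}^{-\frac{1}{2}+2\kappa}$ and the convergence $\underline{v}_\eps \to \underline{v}_0$ in probability.

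Assembling these pieces gives convergence in probability of the triple $(g_\eps, \Xi_\eps, \underline{v}_\eps)$ to $(g_0, \Xi_0, \underline{v}_0)$ in the product space $\mathcal{C}^{-\kappa} \times \mathcal{B}(\mathcal{C}^{\frac{1}{2}+\kappa}, \mathcal{C}^{-\kappa}) \times \mathcal{C}^{-\frac{1}{2}+2\kappa}$, and \propref{fixed-point-continuous} will then immediately yield $v_\eps \to v_0$ in $\mathcal{X}_T^\kappa(\mathcal{C}^{\frac{1}{2}+\kappa})$ in probability. The only piece not explicitly packaged in \secref{coefficients} is the composition/multiplication estimate for the initial condition $\e^{-S_\eps}\underline{u}$, which I expect to be the main (still routine) obstacle; everything else is a direct assembly of previously established results.
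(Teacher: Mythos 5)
Your proposal is correct and follows essentially the same route as the paper: identify $v_\eps$ as the fixed point $\mathcal{V}_T(Z_\eps-F*\xi_\eps,\Xi_\eps,\e^{-S_\eps}\underline u)$, verify convergence in probability of each parameter using \propref{Zepsconv}, \propref{Xiconv}, \lemref{periodic-xi-regularity}, \lemref{Sepsregularity}, and \lemref{multcont}, and then invoke \propref{fixed-point-continuous}. The only minor stylistic differences are that the paper simply notes $F*\xi_\eps\to F*\xi$ in $L^\infty$ (which embeds in $\mathcal{C}^{-\kappa}$) directly from \eqref{Fbd} and periodicity rather than appealing to the wavelet characterization, and the paper leaves implicit the local-Lipschitz estimate for $f\mapsto\e^{-f}$ on $\mathcal{C}^{\frac12-\kappa}$ that you spell out; both points are routine and your version is, if anything, a bit more explicit.
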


\begin{proof}
As noted above, $v_{\eps}=\mathcal{V}_{T}(Z_{\eps}-F*\xi_{\eps},\Xi_{\eps},\e^{-S_{\eps}}\underline{u})$
uniquely solves \eqref{vepsPDE}. Using \eqref{Fbd}, the periodicity
of $\xi$ and $\xi_{\eps}$, and \lemref{periodic-xi-regularity},
we see that $F*\xi_{\eps}\to F*\xi$ as $\eps\to0$ in probability
in $L^{\infty}$. \propref{Zepsconv} says that $Z_{\eps}\to Z_{0}$
in probability in $\mathcal{C}^{-\kappa}$, and \propref{Xiconv}
says that $\Xi_{\eps}\to\Xi_{0}$ in probability in $\mathcal{B}(\mathcal{C}^{\frac{1}{2}+\kappa},\mathcal{C}^{-\kappa})$.
Also, \lemref[s]{Sepsregularity}~and~\ref{lem:multcont}, along
with the assumption $\underline{u}\in\mathcal{C}^{-\frac{1}{2}+2\kappa}$,
imply that $\e^{-S_{\eps}}\underline{u}\to\e^{-S_{0}}\underline{u}$
in probability in $\mathcal{C}^{-\frac{1}{2}+2\kappa}$. Thus \propref{fixed-point-continuous}
implies that $v_{\eps}\to v_{0}$ in probability in $\mathcal{X}_{T}^{\kappa}(\mathcal{C}^{\frac{1}{2}+\kappa})$.
\end{proof}
To prove \thmref{maintheorem}, it simply remains to undo the change
of variables.
\begin{proof}[Proof of \thmref{maintheorem}.]
For $\eps>0$, we have $u_{\eps}=\e^{S_{\eps}}v_{\eps}$ by \lemref{changeofvariables}.
\lemref{Sepsregularity}, \corref{vepsconv}, and \lemref{multcont}
imply that as $\eps\downarrow0$, $u_{\eps}$ converges in probability
to $\e^{S_{0}}v_{0}$ in $\mathcal{X}_{T}^{\kappa}(\mathcal{C}^{\frac{1}{2}-\kappa})$.
The estimate \eqref{Cepsbound} for $C_{\eps}$ was proved as \propref{Ceps-value}.
\end{proof}
\emergencystretch=1em

\printbibliography

\end{document}